\newtheorem{theorem}{Theorem}[section]
\newtheorem{lemma}[theorem]{Lemma}
\newtheorem{proposition}[theorem]{Proposition}
\newtheorem{remark}[theorem]{Remark}
\newenvironment{proof}[1][Proof]{\textbf{#1.} }{\ \rule{0.5em}{0.5em} \par }
\newtheorem{prop}[theorem]{Proposition}
\def\RR{\mathbb{R}}
\def\EE{\mathbb{E}}
\def\DD{\mathbb{D}}
\def\cD{{  D}}
\def\cF  {{\cal F}}
\def\cG  {{\cal G}}
\def\cR{{\cal R}}
\def\be{{\beta}}
\def\de{{\delta}}
\def\la{{\lambda}}
\def\si{{\sigma}}
\def\BB{\mathbb{  B}}
\def\De{{\Delta}}
\def\Om{{\Omega}}
\def\al{{\alpha}}
\def\be{{\beta}}
\def\Ga{{\Gamma}}
\def\ga{{\gamma}}
\def\de{{\delta}}
\def\De{{\Delta}}
\def\si{{\sigma}}
\def\la{{\lambda}}
\def\vare{{\varepsilon}}
\def \eref#1{\hbox{(\ref{#1})}}
\def \eref#1{\hbox{(\ref{#1})}}
\def\si{{\sigma}}
\def\al{{\alpha}}
\def\cF  {{\cal F}}
\def\be{{\beta}}
\def\de{{\delta}}
\def\la{{\lambda}}
\def\si{{\sigma}}
\def\De{{\Delta}}
\def\Om{{\Omega}}
\def\al{{\alpha}}
\def\be{{\beta}}
\def\Ga{{\Gamma}}
\def\ga{{\gamma}}
\def\de{{\delta}}
\def\De{{\Delta}}
\def\si{{\sigma}}
\def\la{{\lambda}}
\def\e{{\varepsilon}}
\def \eref#1{\hbox{(\ref{#1})}}
\def\Om{{\Omega}}
\def\om{{\omega}}
\def\cP{{\cal  P}}
\def\bgamma{{\rm I}\! \Gamma  }
\begin{document}

\title{   Maximum Principle for General Controlled Systems Driven by
Fractional Brownian  Motions}
\author{{\sc Yuecai  Han}\thanks{
Y.  Han is partially supported   by NSFC grant 11071101, NSF of Jilin
grant 20101594 and the Science Research Fund grant 200903281 of
Jilin University}\,, \ \   {\sc Yaozhong Hu}\thanks{Y.  Hu is
partially supported by a grant from the Simons Foundation
\#209206.
\newline
Keywords: Stochastic optimal control,
backward differential
equations, maximum principle, fractional Brownian Motion,   controlled stochastic
differential systems driven fractional Brownian Motion,
Malliavan
calculus, partial information stochastic control.} \ \ and \  \  {\sc Jian Song}
\\
}
\date{}
\maketitle

\begin{abstract} We obtain a  maximum principle for stochastic
control problem of general controlled stochastic differential
systems driven by fractional Brownian motions (of Hurst parameter
$H>1/2$).  This maximum principle  specifies a system of equations
that the optimal control must satisfy (necessary condition for the
optimal control). This system of equations consists of a backward
stochastic differential equation driven by both fractional Brownian
motion and the corresponding underlying standard Brownian motion.   
In addition to this backward equation,
the maximum principle  also involves the Malliavin derivatives.  Our
approach is to use conditioning and Malliavin calculus.   To arrive
at our maximum principle we need to  develop some new results of
stochastic analysis of the controlled systems driven by fractional
Brownian motions via fractional calculus.  Our  approach of
conditioning and Malliavin calculus is also applied to classical
system driven by standard Brownian motion while the controller has only
partial information. As a straightforward consequence, the classical
maximum principle is also deduced in this more natural and simpler way.
\end{abstract}

\section{Introduction}
Fix a finite time horizon $T\in (0, \infty)$. Let $(\Om, \cF  , P)$ be
a basic probability space equipped with a right continuous
filtration $(\cF  _t)_{0\le t\le T}$ satisfying the usual conditions
(\cite{DM}).  Let $B^H=(B_1^H(t)\,, \cdots\,, B_m^H(t)\,, \ 0\le t \le T)$ be an
$m$-dimensional fractional Brownian motion of Hurst parameter $H\in
[\frac12, 1)$  (It is straightforward except notational complexity  to
allow $H$ to be different for different fractional Brownian
motions). This means that $B_j^H(t)$, $j=1, 2, \cdots, m$,  are
independent, continuous,  mean $0$ Gaussian processes with the
following covariance
\begin{equation}
\EE \left(B_i^H(t)B_j^H(s)\right)=\frac12 \de_{ij}\left(t^{2H}+s^{2H}-|t-s|^{2H}\right)\,,
\end{equation}
where  $\displaystyle \de_{ij}=\begin{cases} 1&\quad  \hbox{if}\quad i=j\\ 0& \quad
\hbox{if}\quad i\not=j\end{cases}$ is  the Kronecker symbol.

This process has been applied in many fields such as
hydrology, climatology, economics,  internet traffic analysis,
finance, and many other fields.  The stochastic analysis associated
with fractional Brownian motions has been extensively studied
recently. The stochastic differential equations driven by fractional
Brownian motions  have also been  considered by many researchers through
several approaches, see for example, through general rough path
analysis \cite{CQ}, \cite{FV}, \cite{hustochastics}, \cite{LQ} or through fractional
calculus   \cite{HN1}, \cite{HN2}, \cite{NR}. In particular, we refer   to the references therein.

Since  stochastic control is a main tool of applications of  stochastic
analysis  it is natural to consider the problem of stochastic control of systems driven by
fractional Brownian motions.  Along this direction there have been already some work.
In  \cite{Hu2}, \cite{HOS}    (see also \cite{BHOZ})
some specific stochastic control problems
  relevant to  mathematical finance have  been investigated.
  There is also a general sufficient condition of optimal control for general
  control problems in \cite{HO}.  The explicit optimal linear Markov control was obtained in
  \cite{HZ} by using the technique of completing squares and by using
  the Riccati equations.

However,  the problem of optimal control for general stochastic
systems driven by fractional Brownian motions is far away from being
considered as resolved. In fact,  there has been a lack of necessary
conditions   in a more general setting.  In this paper, we shall
fill this gap. More precisely,  we shall obtain a set of necessary
conditions that the optimal control must satisfy.

The theory of stochastic control of systems driven by standard
Brownian motions is very rich and has found many applications.
There have been mainly two general approaches toward the solutions.
One is the Hamilton-Jacobi-Bellman dynamic programming which
results in a highly nonlinear Hamilton-Jacobi-Bellman equation. The
study of viscosity solutions of this type of equations has
experienced an explosive growth in recent years. See \cite{FS},
\cite{YZ},   and the references therein for the stochastic control
related development. The study of viscosity solutions of the
nonlinear Hamilton-Jacobi-Bellman equation has become one  main
stream of partial differential equations. see \cite{CS}, \cite{L},
and many more other references for general discussion. Another
approach in stochastic control is the Pontryagin's maximum
principle. Starting with \cite{Bis1}, \cite{Bis2}, and \cite{Bis3},
backward stochastic differential equations (abbreviated as BSDEs) has
been used to describe the necessary (and sufficient) conditions that
the optimal control must satisfy. We also refer to \cite{FS},
\cite{P},   \cite{YZ}  and the references therein  for some other work.

The approach of  Bellman dynamic programming heavily depends on the
semigroup property of the underlying system (namely, the Markov
property of the underlying controlled stochastic processes).   In fact, one can also obtain the
Hamilton-Jacobi-Bellman equation for more general Markov processes,
see for example, \cite{nisio}. However, the fractional Brownian
motions are not Markov  except in the case of Brownian motion
($H=1/2$).  Thus,  it is natural to concentrate on extending the
Pontryagin's maximum principle to controlled system driven by fractional
Brownian motions.  The first main task is to find the appropriate
backward stochastic differential equations (an extension   of the
Riccati equation).  In this work, we obtain the backward stochastic
differential equations in a natural way, through the idea of conditioning.
This type of backward
stochastic differential equations involved terms  driven    both by
fractional Brownian motion and  by the underlying standard
Brownian motion.  In the classical standard Brownian motion case,
  researchers usually obtain this backward
stochastic differential equation by the    duality approach for
which one has to know the form of the BSDE in advance. Our approach
is motivated by a recent work \cite{HNS}  on linear BSDE driven by
standard Brownian motion by using Malliavin calculus.  We are
excited about this approach since it is very natural:   the BSDE is
deduced naturally without prior knowledge of the form of the BSDE!

Our approach is  also new in the classical setting of the controlled systems driven
by standard Brownian motions. The advantage of  our approach in the
classical setting of standard Brownian motion is that it also works for
stochastic control with {\it partial information}. Thus,  we also present
our approach to deduce the maximum principle in classical case,
first with partial information and then to give an alternative way
to deduce the classical maximum principle with complete information.  This is done in
Section 3.

To deduce the maximum principle for the controlled stochastic
system driven by fractional
Brownian motions, we need more results on stochastic analysis
of the controlled systems,  which has not been studied yet.  In
particular, we need to have the uniform H\"older continuity of the
solutions, and the differentiability of the solution with respect to
the control.  These results are of interest themselves. We shall
present these new results in Section 4. In  Section 5.1, using the idea in 
Section 3, we obtain a
necessary condition that the optimal control must satisfy when the
controller has only partial information. The stochastic control problem
with  partial information   is very important in finance, while not much
theory has been developed yet.  However, we refer to the work
\cite{HO} (for partial information linear quadratic control) and the
references therein. Section 5.2 aims to simplify the condition
obtained in Section 5.1 when the controller has complete information
available. The condition leads naturally to a new type of backward
stochastic differential equation driven by the underlying standard
Brownian motion and by the fractional Brownian motion. Besides the
complexity of the BSDE  which involves fractional Brownian motion and  the underlying standard
Brownian motion, the system of equations of maximum principle also
involves the Malliavin derivatives.  This system of equations of
maximum principle is very complex. However, this is expected since
the problem is much more complicated now. In the case of controlled
system driven by standard Brownian motion, the theory obviously
reduces to the classical maximum principle as demonstrated in
Section 3.

To obtain our maximum principle  we need some additional
results on fractional calculus and Malliavin calculus.  In Section 2, we introduce some notations and obtain some new results that we shall use.
  We also recall some necessary
notations from \cite{BHOZ}, \cite{DHP}, \cite{Hu1}, \cite{hustochastics},  
\cite{HN1}, and  \cite{HN2}     
 to
establish some new  results for controlled system driven by
fractional Brownian motions.   In
particular we shall establish some new important identity which is necessary
in our approach.

It is interesting to have some  examples  that can be solved by
using our new maximum principle.  However, as it is well-known,  the
explicit solutions of stochastic control are always difficult to
obtain even in the classical Brownian motion case.   See however,
\cite{BSW} for some general discussion. It is expected  that the
problem will be more complex.  In addition, this paper is already
 long. So,  we shall discuss some particular control problems
with this approach in the future.

\setcounter{equation}{0}
\section{Fractional calculus and Malliavin calculus}
\subsection{Fractional calculus}
In this section we recall some results from  fractional calculus.
Let $a,b\in \mathbb{R}$ with $a<b$ and let $ \alpha >0.$   The
left-sided (and right-sided)   fractional Riemann-Liouville
integrals of integrable  function $f$   is  defined  by
\[
I_{a+}^{\alpha }f\left( t\right) =\frac{1}{\Gamma \left( \alpha \right) }%
\int_{a}^{t}\left( t-s\right) ^{\alpha -1}f\left( s\right)
ds\,,\quad a\le t\le b \,,
\]%
[and
\[
I_{b-}^{\alpha }f\left( t\right) =\frac{\left( -1\right) ^{-\alpha
}}{\Gamma \left( \alpha \right) }\int_{t}^{b}\left( s-t\right)
^{\alpha -1}f\left( s\right) ds,
\]%
respectively], where $\left( -1\right) ^{-\alpha }=e^{-i\pi \alpha }$ and $%
\Gamma \left( \alpha \right) =\int_{0}^{\infty }r^{\alpha
-1}e^{-r}dr$ is the Euler gamma function.  The Weyl derivatives are
defined as
\begin{equation}
D_{a+}^{\alpha }f\left( t\right) =\frac{1}{\Gamma \left( 1-\alpha \right) }%
\left( \frac{f\left( t\right) }{\left( t-a\right) ^{\alpha }}+\alpha
\int_{a}^{t}\frac{f\left( t\right) -f\left( s\right) }{\left(
t-s\right) ^{\alpha +1}}ds\right)  \label{e.2.1}
\end{equation}%
and
\begin{equation}
D_{b-}^{\alpha }f\left( t\right) =\frac{\left( -1\right) ^{\alpha
}}{\Gamma \left( 1-\alpha \right) }\left( \frac{f\left( t\right)
}{\left( b-t\right)
^{\alpha }}+\alpha \int_{t}^{b}\frac{f\left( t\right) -f\left( s\right) }{%
\left( s-t\right) ^{\alpha +1}}ds\right)  \label{e.2.2}
\end{equation}%
where $a\leq t\leq b$.

For any $\be  \in (0,1)$, we denote by $C^{\be  }(a,b)$ the space of $%
\beta $-H\"{o}lder continuous functions on the interval $[a,b]$. We
will
make use of the notation%
\[
\left\| x\right\| _{a,b,\beta }=\sup_{a\leq \theta <r\leq b}\frac{%
|x_{r}-x_{\theta }|}{|r-\theta |^{\beta }},
\]%
and%
\[
\Vert x||_{a,b,\infty }=\sup_{a\leq r\leq b}|x_{r}|\,.
\]%
When $a$ and $b$ are clear, then we shall use
$\|x\|_\be=\left\| x\right\| _{a,b,\beta }$ and
$\|x\|_\infty=\left\| x\right\| _{a,b,\infty }$.

It is clear that when $f\in C^\be  (a, b)$ with $\be >\al $,  then
both   $D_{a+}^{\alpha }f\left( t\right) $ and $D_{b-}^{\alpha
}f\left( t\right)$   exist  and we have
\begin{equation}
\begin{cases}
\left|D_{a+}^{\alpha } f\left( t\right) \right| \le C\|f\|_\be
|t-a|^{  \be-\al}
&\qquad \hbox{if} \  \   f(a)=0 \\  \\
\left|D_{b-}^{\alpha }f\left( t\right) \right| \le C\|f\|_\be
|b-t|^{  \be-\al}
&\qquad \hbox{if} \ \   f(b)=0  \,.  \\
\end{cases}\label{e.2.3}
\end{equation}

The following fractional integration by parts formula and its
consequence are  needed in the sequel (see \cite{Za} for a proof).
\begin{proposition}
\label{p1} Suppose that $f\in C^{\lambda }(a,b)$ and $g\in C^{\mu
}(a,b)$ with $\lambda +\mu >1$. Let ${\lambda }>\alpha $ and $\mu
>1-\alpha $. Then the Riemann Stieltjes integral $\int_{a}^{b}fdg$
exists and it can be
expressed as%
\begin{equation}
\int_{a}^{b}fdg=(-1)^{\alpha }\int_{a}^{b}D_{a+}^{\alpha }f\left(
t\right) D_{b-}^{1-\alpha }g_{b-}\left( t\right) dt\,,
\label{e.2.4}
\end{equation}%
where $g_{b-}\left( t\right) =g\left( t\right) -g\left( b\right) $.
\end{proposition}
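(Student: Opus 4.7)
The plan is to prove the Zähle identity in two stages: first establish it for smooth functions by a direct Fubini computation, then extend to the Hölder setting by approximation, using the hypotheses $\lambda+\mu>1$, $\lambda>\alpha$, $\mu>1-\alpha$ to control everything.

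First I would verify that both sides of \eref{e.2.4} make sense. Existence of the Young integral $\int_a^b f\,dg$ is the classical Young theorem under $\lambda+\mu>1$. For the right-hand side, the bounds (2.3) apply: since $\lambda>\alpha$ we get $|D_{a+}^{\alpha}f(t)|\le C(t-a)^{-\alpha}+(\text{bounded piece})$ coming from the non-vanishing boundary term $f(a)/(t-a)^{\alpha}$, and since $\mu>1-\alpha$ and $g_{b-}(b)=0$ the bound $|D_{b-}^{1-\alpha}g_{b-}(t)|\le C\|g\|_{\mu}(b-t)^{\mu-(1-\alpha)}$ holds. Multiplying, the integrand is dominated by $C(t-a)^{-\alpha}(b-t)^{\mu+\alpha-1}$, which is integrable on $[a,b]$ because $\alpha<1$ and $\mu+\alpha-1>-1$.

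Next I would establish the identity for $f,g\in C^1[a,b]$. Starting from $\int_a^b f\,dg=\int_a^b f(t)g'(t)\,dt$ and writing $g_{b-}(t)=-\int_t^b g'(s)\,ds$, I would substitute the definitions \eref{e.2.1} and \eref{e.2.2} into the right-hand side of \eref{e.2.4}:
\begin{equation*}
(-1)^{\alpha}\int_a^b D_{a+}^{\alpha}f(t)\,D_{b-}^{1-\alpha}g_{b-}(t)\,dt
\end{equation*}
and expand each derivative as a boundary term plus a singular integral. After swapping the orders of integration (Fubini is justified because all iterated integrals are absolutely convergent in the smooth case), the interior double-integral pieces combine via the identity
\begin{equation*}
\alpha\int_s^t \frac{du}{(u-s)^{\alpha+1}(t-u)^{1-\alpha}}= \text{const},\qquad s<t,
\end{equation*}
and the standard adjointness $\int_a^b(I_{a+}^{\alpha}\varphi)\psi\,dt=\int_a^b\varphi(I_{b-}^{\alpha}\psi)\,dt$ applied with the singular kernels collapses the expression to $\int_a^b f(t)g'(t)\,dt$. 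The phase factor $(-1)^{\alpha}$ cancels the $(-1)^{-\alpha}$ hidden in the definition of $D_{b-}^{1-\alpha}$.

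Finally, I would extend to the general Hölder case by approximation. Choose smooth $f_n\to f$ in $C^{\lambda'}$ and $g_n\to g$ in $C^{\mu'}$ for some $\alpha<\lambda'<\lambda$ and $1-\alpha<\mu'<\mu$ with $\lambda'+\mu'>1$ (obtained by mollification, possibly with a slight loss in Hölder exponents). The left-hand sides converge to $\int_a^b f\,dg$ by continuity of the Young integral in Hölder norms. On the right-hand sides, the estimates (2.3) give pointwise convergence of $D_{a+}^{\alpha}f_n\to D_{a+}^{\alpha}f$ and $D_{b-}^{1-\alpha}(g_n)_{b-}\to D_{b-}^{1-\alpha}g_{b-}$ almost everywhere, together with a uniform domination by $C(t-a)^{-\alpha}(b-t)^{\mu'+\alpha-1}$, so dominated convergence transfers the identity to the limit. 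The main obstacle is exactly this last step: one must produce approximations whose Hölder norms are controlled uniformly in $n$ so that the bounds in (2.3) can serve as a dominating function; this is the technical heart of Zähle's original argument and is what forces the strict inequalities $\lambda>\alpha$, $\mu>1-\alpha$ rather than the merely Young-compatible $\lambda+\mu>1$.
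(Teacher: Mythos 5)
The paper offers no proof of Proposition \ref{p1}: it is quoted with a pointer to \cite{Za}. So the comparison is against the standard argument there, and your two-stage plan --- check integrability of both sides using \eref{e.2.3}, prove the identity for smooth functions by a Fubini/adjointness computation, then pass to the H\"older case by mollification with uniform control of the H\"older seminorms and dominated convergence --- is exactly that standard route. Your use of the hypotheses is also correct: $\lambda>\alpha$ gives $|D_{a+}^{\alpha}f(t)|\le C(t-a)^{-\alpha}$, $\mu>1-\alpha$ together with $g_{b-}(b)=0$ gives $|D_{b-}^{1-\alpha}g_{b-}(t)|\le C\|g\|_{\mu}(b-t)^{\mu+\alpha-1}$, and the product is integrable; and the observation that mollification does not increase H\"older seminorms is precisely what makes \eref{e.2.3} a uniform dominating function in the limiting step.

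One step as written is wrong, however: the displayed ``identity''
\[
\alpha\int_s^t\frac{du}{(u-s)^{\alpha+1}(t-u)^{1-\alpha}}=\mathrm{const}
\]
is a divergent integral (the exponent $\alpha+1>1$ at $u=s$). The kernel $(t-s)^{-\alpha-1}$ appearing in the Weyl derivative \eref{e.2.1} never occurs alone; it is always paired with the increment $f(t)-f(s)$, and the Beta-function identity that actually closes the Fubini computation is
\[
\int_s^t (u-s)^{-\alpha}(t-u)^{\alpha-1}\,du=B(1-\alpha,\alpha)=\Gamma(\alpha)\Gamma(1-\alpha),
\]
which is what underlies $I_{a+}^{\alpha}I_{a+}^{1-\alpha}=I_{a+}^{1}$ and the inversion $D_{a+}^{\alpha}I_{a+}^{\alpha}=\mathrm{id}$. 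The cleanest repair of your smooth-function step is the operator form you already allude to: write $f=I_{a+}^{\alpha}D_{a+}^{\alpha}f$ and $g_{b-}=I_{b-}^{1}g'$, so that $D_{b-}^{1-\alpha}g_{b-}=I_{b-}^{\alpha}g'$, and apply the duality $\int_a^b(I_{a+}^{\alpha}\varphi)\psi\,dt=(-1)^{\alpha}\int_a^b\varphi\,(I_{b-}^{\alpha}\psi)\,dt$ with $\varphi=D_{a+}^{\alpha}f$ and $\psi=g'$; this also accounts for the phase $(-1)^{\alpha}$ cleanly. With that correction your argument goes through and is essentially Z\"ahle's.
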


 From the definition \eref{e.2.1} of  $D_{a+}^{\alpha }f\left(
t\right)$ and \eref{e.2.3}, we have immediately
\begin{proposition}
\label{p2} Suppose that $f\in C^{\lambda }(a,b)$ and $g\in C^{\mu
}(a,b)$ with $\lambda +\mu >1$. Let ${\lambda }>\alpha $ and $\mu
>1-\alpha $. Then the Riemann Stieltjes integral $\int_{a}^{b}fdg$
exists and
\begin{eqnarray}
\left| \int_{a}^{b}fdg\right|
&\le & C\|g\|_\mu   \int_a^b \frac{|f(r)|}{(r-a)^{\al} } (b-r)^{\al+\mu-1} dr\nonumber\\
&&\quad +C \|g\|_\mu  \int_a^b \int_a^r
\frac{|f(r)-f(\tau)|}{|r-\tau|^{\al+1} }
 (b-r)^{\al+\mu-1}  d\tau dr \,.   \label{e.2.5}
\end{eqnarray}
\end{proposition}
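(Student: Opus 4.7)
The plan is to deduce Proposition~\ref{p2} as a direct consequence of Proposition~\ref{p1} combined with the explicit formula (\ref{e.2.1}) for $D_{a+}^\alpha f$ and the pointwise bound (\ref{e.2.3}) applied to $D_{b-}^{1-\alpha} g_{b-}$. The existence of $\int_a^b f\, dg$ is already given by Proposition~\ref{p1}, so only the pointwise inequality remains.

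First I would invoke Proposition~\ref{p1} to write
\[
\int_a^b f\, dg = (-1)^\alpha \int_a^b D_{a+}^\alpha f(t)\, D_{b-}^{1-\alpha} g_{b-}(t)\, dt,
\]
so that taking absolute values reduces the problem to estimating the two Weyl derivatives in the integrand. Next, since $g_{b-}(b) = g(b) - g(b) = 0$ and $\|g_{b-}\|_\mu = \|g\|_\mu$, the second inequality in (\ref{e.2.3}), with $\alpha$ replaced by $1-\alpha$ and $\beta$ by $\mu$, yields
\[
\bigl| D_{b-}^{1-\alpha} g_{b-}(t) \bigr| \le C \|g\|_\mu (b-t)^{\mu - (1-\alpha)} = C \|g\|_\mu (b-t)^{\alpha + \mu - 1},
\]
which is exactly the weight appearing on the right-hand side of (\ref{e.2.5}). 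The hypothesis $\mu > 1-\alpha$ is precisely what makes this exponent nonnegative (in fact guarantees integrability in $t$ near $b$).

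Then I would plug in the explicit formula (\ref{e.2.1}),
\[
D_{a+}^\alpha f(t) = \frac{1}{\Gamma(1-\alpha)} \left( \frac{f(t)}{(t-a)^\alpha} + \alpha \int_a^t \frac{f(t) - f(s)}{(t-s)^{\alpha+1}}\, ds \right),
\]
take absolute values inside, and multiply by the bound on $|D_{b-}^{1-\alpha} g_{b-}(t)|$. Integrating in $t$ from $a$ to $b$ and renaming the outer variable as $r$ (and the inner variable as $\tau$) produces exactly the two terms on the right-hand side of (\ref{e.2.5}), with a constant absorbing $1/\Gamma(1-\alpha)$, $\alpha$, and the constant $C$ from (\ref{e.2.3}).

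The only step requiring any thought is checking that the integrals in (\ref{e.2.5}) are actually finite under the stated hypotheses, which serves as a sanity check rather than a real obstacle: near $r=a$ the factor $(r-a)^{-\alpha}$ with $\alpha < 1$ is integrable, the H\"older condition $f \in C^\lambda$ with $\lambda > \alpha$ tames the singularity $|r-\tau|^{-(\alpha+1)}$ in the inner integral (yielding $(r-a)^{\lambda - \alpha}$), and the factor $(b-r)^{\alpha + \mu - 1}$ is integrable near $b$ since $\alpha + \mu > 1 > 0$. So there is no genuine obstacle; the proposition is essentially a bookkeeping computation built on Proposition~\ref{p1}, (\ref{e.2.1}), and (\ref{e.2.3}).
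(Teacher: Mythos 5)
Your proposal is correct and follows exactly the route the paper indicates: the authors state that Proposition~\ref{p2} follows ``immediately'' from the integration by parts formula of Proposition~\ref{p1}, the explicit expression \eref{e.2.1} for $D_{a+}^{\alpha}f$, and the pointwise bound \eref{e.2.3} applied to $D_{b-}^{1-\alpha}g_{b-}$ (using $g_{b-}(b)=0$), which is precisely your argument. Nothing is missing.
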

\subsection{Stochastic  calculus for fractional Brownian motions}
Let $(W(t) =(W_1(t), \cdots, W_m(t)), 0\le t\le T)$ be an
$m$-dimensional standard Brownian motion. Let
\begin{equation}
Z_H(t,s)=\kappa_H \left[\left(\frac{t}{s}\right)^{H-\frac12
}(t-s)^{H-\frac12 }-(H-\frac12 )s^{\frac12 -H}\int_s^t u^{H-\frac32
}(u-s)^{H-\frac12 }du\right] \label{e.2.4a}
\end{equation}
with
\begin{equation}\label{kappah}
\kappa_H=\sqrt{\frac{2H\Ga(\frac32-H)}{\Ga(H+\frac12 )\Ga(2-2H)}}\,.
\end{equation}
 and define
\begin{equation}
B_j^H(t)=\int_0^t Z_H(t,s)dW_j(s)\,, \ 0\le t<\infty \,.
\label{e.2.3a}
\end{equation}
Then  from (2.2)-(2.4) of \cite{Hu1} we see that
$B^H(t)=(B_1^H(t)\,, \cdots, B^H_m (t)), 0\le t\le T) $ is an $m$
dimensional  fractional Brownian motion.   This means that
$B_j^H(t)\,, j=1, \cdots, m$  are independent Gaussian processes
with mean $0$ and variance given by
\[
\EE \left(B_j^H(t)B_i^H(s)\right)=\frac12 \de_{ij}
\left(r^{2H}+s^{2H}-|t-s|^{2H}\right)\,. 
\]
The stochastic integral
with respect to $B_j^H$ can be defined in a similar way as in \cite{Hu1}.  We shall use the
results in \cite{Hu1}.

Recall the operators   $\bgamma_{H, T}^* $  and
$\BB_{H,T}^*$  (see the equations (5.21) and (5.35) of \cite{Hu1})
\begin{equation}
\bgamma_{H,T} ^*   f(t):=(H-\frac12) \kappa_H t^{\frac12-H} \int_t^T
u^{H-\frac12} (u-t) ^{H-\frac32} f(u)du \,, \quad 0\le t\le T\,.
\label{e.2.8}
\end{equation}
and
\begin{equation}
\BB_{H,T}^* f(t)=-\frac{2H\kappa_1}{\kappa_H}
t^{\frac12-H}\frac{d}{dt} \int_t^T
(u-t)^{\frac12-H}u^{H-\frac12}f(u)du\,,\label{e.2.9}
\end{equation}
where $\kappa_H$ is defined in (\ref{kappah}) and $$\kappa_1=\frac{1}{2H\Gamma(H-\frac12)\Gamma(\frac32-H)}.$$

Let $\xi_1$, $\cdots$, $\xi_k$, $\cdots$ be an ONB of $L^2([0,
T])$ such that $\xi_k$, $k=1, 2, \cdots$ are smooth functions on
$[0, T]$.    We denote
$
\tilde \xi_{j,l}=\int_0^T \xi_j(t) dW_l(t) \,,
$\ where $j=1, 2, \cdots$, and $l=1, \cdots, m$.  \
Let $\cP $
be the set of all polynomials of  the standard Brownian motion $W$
over interval $[0, T]$.
Namely, $\cP$  contains
all elements of the form
\begin{equation}
F(\om)=f\left(\tilde \xi_{j_1, l_1} \,, \cdots \,, \tilde \xi_{j_n, l_n}
\right)\,,\label{e.2.10}
\end{equation}
where $f$ is a polynomial of $n$ variables.  
If $F$ is of the above form \eref{e.2.10},  then its Malliavin derivative
$D_s F$ is   defined as
\begin{equation}
D_s^l F=\sum_{i=1}^n \frac{\partial f}{\partial x_i}
\left(\tilde \xi_{j_1, l_1} \,, \cdots \,, \tilde \xi_{j_n, l_n}
\right)
 \xi_{j_i } (s) I_{ \{ l_i=l\}} \,,\quad 0\le s\le T\,.
\end{equation}
For any $F\in \cP$, we denote the following norm
\[
\|F\|_{1,  p}:=\|F\|_p+\sum_{l=1}^m  \left[\EE \left(\int_{[0 T] } |
D_t^l  F|^2dt \right)^{p/2}\right]^{1/p}
\,.
\]
It is easy to check  that $\|\cdot\|_{ 1,p}$ is a norm $p\in (1, \infty)$.
Let $\DD_{1,  p}$ denote the
Banach space obtained by completing $\cP$ under the norm
$\|\cdot\|_{1,p}$.  We can certainly define the higher derivatives.
But we only need the first derivatives in this paper.

For the above $\xi_j$'s  we can define $\eta_j=\BB_{H,T}^*  \xi_j$
and we denote by $\cP^H$ the set of all polynomial functionals of
$\tilde \eta_{j, l}:=\int_0^T \eta_j(t) dB_l^H(t)$.  For an  element
$G$ in $\cP^H$ of the following form
\begin{equation}
  G(\om)=g\left( \tilde \eta_{j_1, l_1} \,, \cdots \,, \tilde \eta_{j_n, l_n}
\right)\,,
\end{equation}
where $g$   is a polynomial of $n$ variables
we define its  Malliavin derivative
  $D_s^{H, l}  G$  by
\begin{equation}
D_s^{H, l}   G=\sum_{i=1}^n \frac{\partial g}{\partial x_i}
\left(\tilde \eta_{j_1, l_1} \,, \cdots \,, \tilde \eta_{j_n, l_n}
\right)
 \eta_{j_i } (s) I_{ \{ l_i=l\}}  \,,\quad 0\le s\le T\,.
\end{equation}
Similarly,  we can   define $\|\cdot\|_{H, 1,p}$ and  $\DD_{H, 1, p}$.

For fractional  Brownian motions, another different Malliavin derivative
is also very useful.
\begin{equation}
\DD_s^{ l} G=\int_0^T  \phi(s-r) D_r^{H, l} G dr \,,
\end{equation}
where
\begin{equation}
\phi(s)=H(2H-1) |s|^{2H-2}\,,\quad  0\le s\le T\,.
\end{equation}
It is well-known from (\cite{Hu1}, Theorem 6.23) that
\begin{prop}\label{p.2.3}
Let $\psi:[0,T]\otimes (\Om, \cF  , P^H)\rightarrow \RR$ be jointly
measurable and let $F\in D_{H, 1, 2}$. Then
\begin{equation}
\EE \left\{ F\int_a^b \psi_t d  B_j^H(t)\right\} =\int_a^b  \EE \left( \left[
\DD_t^j F\right]  \psi_t \right) dt\  \label{ito-expectation}
\end{equation}
\end{prop}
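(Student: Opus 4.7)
The identity is the duality (integration-by-parts) formula between the divergence-type stochastic integral with respect to $B^H$ and the kernel-modified Malliavin derivative $\DD$, and is stated in the paper as a known result of Theorem 6.23 of \cite{Hu1}. My plan for a proof is to reduce the statement to the corresponding classical duality formula for the underlying Brownian motion $W$ appearing in \eqref{e.2.3a}.

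First I would establish the identity on the dense subclass in which $F\in \cP^H$ is cylindrical and $\psi$ is a simple process of the form $\sum_i G_i \mathbf{1}_{(t_{i-1},t_i]}$ with $G_i\in\cP^H$. The key transfer step is that, since $\eta_k=\BB_{H,T}^*\xi_k$, the construction \eqref{e.2.3a} of $B^H$ yields $\tilde\eta_{k,l}=\int_0^T\xi_k(s)\,dW_l(s)=\tilde\xi_{k,l}$, so $F$ can simultaneously be viewed as a polynomial functional of $W$, and the fBm integral converts to a Brownian Skorokhod integral via
\begin{equation*}
\int_a^b \psi_t\,dB_j^H(t)=\int_0^T \bigl(\BB_{H,T}^*(\psi\mathbf{1}_{[a,b]})\bigr)(s)\,\delta W_j(s).
\end{equation*}
Applying the classical Brownian duality $\EE[F\,\delta(u)]=\EE\int D^j_s F\cdot u_s\,ds$ then gives
\begin{equation*}
\EE\left[F\int_a^b \psi_t\,dB_j^H(t)\right]=\EE\int_0^T D_s^j F\cdot \bigl(\BB_{H,T}^*(\psi\mathbf{1}_{[a,b]})\bigr)(s)\,ds.
\end{equation*}

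The next step is to rewrite this as $\int_a^b \EE[\DD_t^j F\cdot\psi_t]\,dt$. This is achieved through the adjointness identity
\begin{equation*}
\int_0^T (\BB_{H,T}^* f)(s)(\BB_{H,T}^* h)(s)\,ds=\int_0^T\!\!\int_0^T \phi(t-r)\,f(t)\,h(r)\,dt\,dr,
\end{equation*}
which expresses $\phi$ as the reproducing kernel obtained by composing $\BB_{H,T}^*$ with its adjoint, together with the companion relation $D_s^j F=\BB_{H,T}^*(D_\cdot^{H,j}F)(s)$ on polynomial functionals. Combining the two, the pairing in $s$ against $\BB_{H,T}^*\psi$ collapses into the pairing in $t$ of $\psi_t$ against $\int_0^T\phi(t-s)D_s^{H,j}F\,ds=\DD_t^j F$, restricted to $[a,b]$. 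Extension to arbitrary $F\in\DD_{H,1,2}$ and jointly measurable $\psi$ then follows from density of $\cP^H$ in $\DD_{H,1,2}$ and from $L^2$-closability of the fBm divergence operator.

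The main obstacle is the correct interpretation of $\int \psi_t\,dB_j^H(t)$ when $\psi$ is $\om$-dependent: it must be read as a divergence (Skorokhod) integral in the sense of Hu's fBm calculus, and its $L^2$-boundedness on simple processes, controlled through the kernel $\phi$ by the fBm Malliavin norms, is the real substance of the proposition. Once the adjointness identity above and this closability are in hand, the fBm duality is simply the image of its Brownian counterpart under the transfer $\BB_{H,T}^*$.
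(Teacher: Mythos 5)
The paper does not actually prove Proposition \ref{p.2.3}; it is quoted as a known result with a citation to Theorem 6.23 of \cite{Hu1}, so there is no in-paper argument to compare against. Your sketch reconstructs what is essentially the proof behind that citation: verify the identity for cylindrical $F\in\cP^H$ and simple $\psi$, transfer the fractional integral to a Wiener--Skorokhod integral, invoke the classical duality $\EE[F\,\delta(u)]=\EE\int D^j_sF\,u_s\,ds$, and collapse the resulting pairing using the reproducing-kernel property of $\phi$ together with the relation between $D^j$ and $D^{H,j}$; density of $\cP^H$ and closability give the general case. That is the right architecture, and your remark that the integral must be read in the divergence sense for random $\psi$ is exactly the point.

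There is, however, a concrete error in every displayed formula after the first: you use $\BB_{H,T}^*$ where the operator $\bgamma_{H,T}^*$ of \eref{e.2.8} is required. The two are mutually inverse, and by the relations quoted from page 45 of \cite{Hu1} it is $\bgamma_{H,T}^*$ that carries an fBm integrand to a Wiener integrand,
\begin{equation*}
\int_a^b\psi_t\,dB_j^H(t)=\int_0^T\bigl(\bgamma_{H,T}^*(\psi\mathbf{1}_{[a,b]})\bigr)(s)\,\delta W_j(s)\,,
\end{equation*}
and $\bgamma_{H,T}^*$ that satisfies the isometry $\int_0^T(\bgamma_{H,T}^*f)(s)(\bgamma_{H,T}^*h)(s)\,ds=\int_0^T\!\!\int_0^T\phi(t-r)f(t)h(r)\,dt\,dr$; likewise the paper's derivative relation reads $D_s^{H,j}F=\BB_{H,T}^*(D_\cdot^jF)(s)$, so the companion relation you need is $D_s^jF=\bgamma_{H,T}^*(D_\cdot^{H,j}F)(s)$, not the formula with $\BB_{H,T}^*$. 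As written, each of these three identities is false. Because the substitution is made consistently, the argument goes through verbatim once every $\BB_{H,T}^*$ after your opening paragraph is replaced by $\bgamma_{H,T}^*$ (only the first step, $\tilde\eta_{k,l}=\int_0^T\BB_{H,T}^*\xi_k\,dB_l^H=\tilde\xi_{k,l}$, correctly uses $\BB_{H,T}^*$). You should also note that the proposition as stated suppresses the integrability hypotheses on $\psi$ needed for the limiting step, but that looseness is inherited from the paper's statement rather than introduced by your proof.
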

The following proposition from  (\cite{DHP}, Theorem 3.9) will also
be used in this the sequel.
\begin{prop}\label{p.2.4}  Let $\psi$ be a jointly measurable stochastic process
which has Malliavin derivative and  let the following hold:
\[
\EE \left[ \int_a^b\int_a^b |\psi_s\psi_t|\phi(s-t) dsdt
+\int_a^b\int_a^b |\DD_s \psi_t|^2 dsdt\right] <\infty\,.
\]
Then
\begin{equation}
 \int_a^b \psi(t) d^\circ B_j^H(t)= \int_a^b \psi(t) d B_j^H(t)
 +   \int_a^b \DD_t^{j} \psi(t) dt\,.
 \  \label{ito-path}
\end{equation}
\end{prop}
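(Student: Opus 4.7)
The plan is to establish this by a standard approximation argument: first verify the identity on an elementary dense class of processes, then extend by continuity using the Skorohod-type isometry for fractional Brownian motion.

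First I would treat the elementary case $\psi_t = F \mathbf{1}_{[s_1, s_2]}(t)$ with $F \in \DD_{H, 1, 2}$ and $a \le s_1 < s_2 \le b$. For such $\psi$, the divergence (Skorohod) integral against $B_j^H$ is given by the product formula
\[
\int_a^b \psi_t \, dB_j^H(t) = F \bigl(B_j^H(s_2) - B_j^H(s_1)\bigr) - \int_{s_1}^{s_2} \DD_t^j F \, dt,
\]
which one derives from the duality identity of Proposition \ref{p.2.3} applied against a dense class of test functionals $G \in \cP^H$ together with the Leibniz rule for $\DD^j$. The corresponding symmetric (Stratonovich) integral of $\psi$ reduces to $F \bigl(B_j^H(s_2) - B_j^H(s_1)\bigr)$ since $F$ does not depend on the time variable. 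Since $\DD_t^j \psi_t = (\DD_t^j F)\, \mathbf{1}_{[s_1, s_2]}(t)$, subtracting yields the claimed identity for $\psi$. Linearity extends it to all step processes $\psi_t = \sum_i F_i \mathbf{1}_{[s_i, s_{i+1})}(t)$.

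Next I would verify that the elementary processes are dense in the space of processes satisfying the integrability hypothesis under the norm
\[
\|\psi\|^2 := \EE\left[\int_a^b \int_a^b |\psi_s \psi_t|\, \phi(s-t)\, ds\, dt + \int_a^b \int_a^b |\DD_s \psi_t|^2\, ds\, dt\right],
\]
by mollifying in time and approximating the functionals by polynomial ones in $\cP^H$. Given $\psi$ in the hypothesis and a sequence $\psi^{(n)}$ of elementary processes converging to $\psi$ in this norm, the first summand controls $L^2(\Omega)$-convergence of the Skorohod integrals $\int_a^b \psi^{(n)}_t \, dB_j^H(t)$ via the fBm Skorohod isometry, while the second summand controls convergence of $\int_a^b \DD_t^j \psi^{(n)}_t \, dt$ in $L^1(\Omega)$ by Cauchy--Schwarz.

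The main obstacle will be the pathwise integral $\int_a^b \psi_t\, d^\circ B_j^H(t)$: it is intrinsically a pathwise object and not naturally controlled by the Hilbert-space norm above, so one cannot pass the identity to the limit directly from the Stratonovich side. My strategy is to exploit the identity itself --- since the right-hand side converges in $L^1(\Omega)$, the Stratonovich integral of $\psi$ is \emph{defined} as the $L^1(\Omega)$ limit of $\int_a^b \psi^{(n)}_t \, d^\circ B_j^H(t)$, and the identity passes to the limit automatically. The substantive step is then to check that this limit agrees with any pathwise notion of Stratonovich integral, e.g.\ the symmetric Riemann--sum definition or the Young/fractional-calculus integral of Proposition \ref{p1}, whenever $\psi$ is regular enough for such an intrinsic object to exist; this reconciliation uses the H\"older regularity of $B_j^H$ and the inequality \eref{e.2.5}.
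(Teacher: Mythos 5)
The paper does not actually prove Proposition \ref{p.2.4}: it is quoted from Theorem 3.9 of the Duncan--Hu--Pasik-Duncan reference \cite{DHP}, so there is no in-paper argument to compare against. Your outline is essentially the standard proof of that cited theorem, and the core computation --- the product formula $F\,(B_j^H(s_2)-B_j^H(s_1))=\int_{s_1}^{s_2}F\,dB_j^H(t)+\int_{s_1}^{s_2}\DD_t^j F\,dt$ for a single Malliavin-differentiable $F$, obtained from the duality \eref{ito-expectation} and the Leibniz rule --- is exactly right. The one structural difference worth flagging is the point you yourself identify as the ``main obstacle''. The cited proof does not approximate $\psi$ by elementary processes in the Hilbert norm and then struggle to control the pathwise integral; instead it applies the product formula term by term to the Riemann sums $\sum_i \psi_{t_i}\bigl(B_j^H(t_{i+1})-B_j^H(t_i)\bigr)$ of $\psi$ itself. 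Since the Stratonovich/pathwise integral is by definition the limit (in probability, and coinciding a.s.\ with the Young integral of Proposition \ref{p1} when $\psi$ is H\"older continuous of complementary order) of precisely these sums, while the transformed right-hand side converges to $\int_a^b\psi\,dB_j^H(t)+\int_a^b\DD_t^j\psi_t\,dt$ under the stated integrability hypotheses, the two limits are identified with no separate reconciliation step. Your last paragraph, in which the Stratonovich integral is \emph{defined} as the $L^1$ limit that makes the identity hold and the agreement with the intrinsic pathwise integral is deferred, is the only soft spot: as written it renders the identity tautological for general $\psi$ and pushes all the substance into the unproved reconciliation. Replacing the elementary-process approximation by the Riemann-sum argument closes that gap and is, in effect, what \cite{DHP} does.
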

The following proposition is easy consequence of the above identity (\ref{ito-expectation})
and the relation between path-wise integral and the stochastic integral obtained
by using the Wick product (\ref{ito-path}).
\begin{prop} If $F$ satisfies the condition in Proposition
\ref{p.2.3} and $\psi$ satisfies the conditions in Propositions
\ref{p.2.3} and \ref{p.2.4},  then
\begin{equation}
\EE \left\{ F\int_a^b \psi_t d^\circ B_j^H(t)\right\} =\int_a^b\EE \left(
\DD_t^j \left[F \psi_t\right] \right) dt\,. \label{expectation}
\end{equation}
\end{prop}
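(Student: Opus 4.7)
The plan is to combine the two preceding propositions directly, using the Leibniz rule for the Malliavin derivative $\DD^j$ to recognize the sum of two terms as a single derivative of the product.

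First I would apply Proposition \ref{p.2.4} to rewrite the pathwise (Stratonovich-type) integral in terms of the divergence integral plus a trace term:
\begin{equation*}
\int_a^b \psi_t\, d^\circ B_j^H(t) = \int_a^b \psi_t\, dB_j^H(t) + \int_a^b \DD_t^j \psi_t\, dt.
\end{equation*}
The hypotheses on $\psi$ are exactly what is required to justify this identity. Multiplying by $F$ and taking expectation (Fubini on the second term is harmless under the stated integrability conditions),
\begin{equation*}
\EE\!\left\{F\int_a^b \psi_t\, d^\circ B_j^H(t)\right\} = \EE\!\left\{F\int_a^b \psi_t\, dB_j^H(t)\right\} + \int_a^b \EE\!\left(F\,\DD_t^j \psi_t\right) dt.
\end{equation*}

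Next I would apply Proposition \ref{p.2.3} to the first term on the right, which transforms the divergence integral against $B_j^H$ into an ordinary Lebesgue integral involving the Malliavin derivative of $F$:
\begin{equation*}
\EE\!\left\{F\int_a^b \psi_t\, dB_j^H(t)\right\} = \int_a^b \EE\!\left((\DD_t^j F)\,\psi_t\right) dt.
\end{equation*}
Combining the two displays yields
\begin{equation*}
\EE\!\left\{F\int_a^b \psi_t\, d^\circ B_j^H(t)\right\} = \int_a^b \EE\!\left[(\DD_t^j F)\,\psi_t + F\,\DD_t^j \psi_t\right] dt.
\end{equation*}

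Finally I would invoke the Leibniz rule $\DD_t^j(F\psi_t) = (\DD_t^j F)\psi_t + F\,\DD_t^j \psi_t$, which is immediate on the polynomial class $\cP^H$ from the definition of $\DD^j$ (it is a weighted integral of $D^{H,j}$, which in turn acts by chain rule on polynomials) and extends to the domain $\DD_{H,1,p}$ by the closability/approximation used in defining these spaces. Substituting gives the claimed identity. The only point needing a little care is the interchange of expectation and the $dt$-integral and the validity of the product rule under the stated integrability assumptions; but since the two terms on the right of the penultimate display are already separately integrable (by the hypotheses inherited from Propositions \ref{p.2.3} and \ref{p.2.4}), so is their sum, and the identification with $\DD_t^j(F\psi_t)$ is routine. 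No deeper obstacle is expected.
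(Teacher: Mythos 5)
Your argument is correct and is exactly the route the paper intends: the text presents this proposition as an ``easy consequence'' of the duality identity \eref{ito-expectation} and the pathwise--divergence relation \eref{ito-path}, which is precisely the combination you carry out, supplemented by the Leibniz rule for $\DD_t^j$. Your write-up simply makes explicit the details the paper leaves unstated.
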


If $F$ is a nice functional of the fractional Brownian motions
$B^H=( B_1^H, \cdots, B_m^H)$,   then it is also a functional of
standard Brownian motions $W=(W_1, \cdots, W_m)$. So for this $F$ we
can compute both the Malliavin derivatives $D_s^l$ and $D_s^{H,l}$.
The relation between these two Malliavin derivatives are useful
later in this paper.   Next, we shall find this relation.

If $f$ is a nice function on $[0, T]$,  then
$\int_0^T f(t)dW_j(t)$ and $\int_0^T f(t)dB_j^H(t)$ are well-defined
and from page 45 of \cite{Hu1},  we have
\[
\int_0^T f(t) d B_j^H(t)=\int_0^T (\bgamma_{H, T}^* f)(t) dW_j(t)
\]
and
\[
\int_0^T f(t) dW_j(t)=\int_0^T (\BB_{H, T}^* f)(t) dB^H_j(t)\,,
\]
where $\bgamma_{H, T}^*$ and $\BB_{H, T}^*$ are defined by \eref{e.2.8}
and \eref{e.2.9}.
If $F$ is given   by \eref{e.2.10},   then $F$ can also be represented by
\[
F=f\left(\int_0^T (\BB_{H, T}^* \xi_{j_1})(t) dB_{l_1}^H(t)\,,
\cdots\,,  \int_0^T (\BB_{H, T}^* \xi_{j_n}) (t) dB_{l_1}^H(t)
\right)\,.
\]
Thus as a functional of $B^H$,  its Malliavin derivative
$D_s F$ is   defined as
\begin{eqnarray*}
D_s^{H,l}  F
&=&\sum_{i=1}^n \frac{\partial f}{\partial x_i}
\left(\int_0^T (\BB_{H, T}^* \xi_{j_1})(t) dB_{l_1}^H(t)\,,
\cdots\,,  \int_0^T (\BB_{H, T}^* \xi_{j_n}) (t) dB_{l_1}^H(t)
\right)\\
&&\qquad
 (\BB_{H, T}^* \xi_{j_i}) (s) I_{ \{ l_i=l\}} \,,\quad 0\le s\le T\,.
\end{eqnarray*}
Thus for $F\in\cP$,  we have
\begin{eqnarray*}
D_s^{H,l}  F
&=& \BB_{H, T}^* D_\cdot^l  F(s) \\
&=&-\frac{2H\kappa_1}{\kappa_H}
s^{\frac12-H}\frac{d}{ds} \int_s^T
(u-s)^{\frac12-H}u^{H-\frac12} D_u^lFdu
\end{eqnarray*}
and
\begin{eqnarray*}
\DD^l_t  F
 &=&-\frac{2H\kappa_1}{\kappa_H} H(2H-1) \int_0^T
s^{\frac12-H} |t-s|^{2H-2}\left( \frac{d}{ds} \int_s^T
(u-s)^{\frac12-H}u^{H-\frac12} D_u^lFdu\right) ds\,.
\end{eqnarray*}
By a limiting argument, we have the following
\begin{prop} \label{p.2.6}   If $F\in {  \DD_{1, p}\cap \DD_{H, 1, p}}$,  then
\begin{eqnarray}
D_s^{H,l}  F &=&-\frac{2H\kappa_1}{\kappa_H}
s^{\frac12-H}\frac{d}{ds} \int_s^T
(u-s)^{\frac12-H}u^{H-\frac12} D_u^lFdu
\label{e.2.18}\\
\DD^l_t  F
 &=&c_{1, H}  \int_0^T
s^{\frac12-H} |t-s|^{2H-2}\left( \frac{d}{ds} \int_s^T
(u-s)^{\frac12-H}u^{H-\frac12} D_u^lFdu\right) ds\,,\nonumber\\
\label{e.2.19}
\end{eqnarray}
where $c_{1, H}=-\frac{2H^2(2H-1) \kappa_1}{\kappa_H}  $.
\end{prop}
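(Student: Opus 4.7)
The identity has already been established in the preceding discussion for any polynomial functional $F\in\cP$, by writing $F$ both as a function of the stochastic integrals $\tilde\xi_{j_i,l_i}=\int_0^T\xi_{j_i}(t)dW_{l_i}(t)$ and as a function of the integrals $\int_0^T(\BB_{H,T}^*\xi_{j_i})(t)dB^H_{l_i}(t)$, and then comparing the two expressions of $D_s^{H,l}F$ given by the chain rule. The content of Proposition \ref{p.2.6} is therefore the extension of these pointwise formulas from $\cP$ to the space $\DD_{1,p}\cap\DD_{H,1,p}$ via closure.

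The plan is the following. First, observe that both \eref{e.2.18} and \eref{e.2.19} can be written as bounded linear operators acting on the trajectory $u\mapsto D_u^lF$:
\[
D_s^{H,l}F=(\BB_{H,T}^*D_\cdot^lF)(s),\qquad \DD^l_tF=\int_0^T\phi(t-s)(\BB_{H,T}^*D_\cdot^l F)(s)\,ds,
\]
the second formula being just the definition of $\DD^l_t$ combined with the first one. Next, since $\cP$ is dense in $\DD_{1,p}\cap\DD_{H,1,p}$ by construction, we can pick a sequence $F_n\in\cP$ with $F_n\to F$ in both $\|\cdot\|_{1,p}$ and $\|\cdot\|_{H,1,p}$. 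For each $F_n$ the identity is already proved, so we only have to pass to the limit on both sides.

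On the left-hand side, $D^{H,l}F_n\to D^{H,l}F$ in $L^p(\Om;L^2([0,T]))$ from the convergence in $\|\cdot\|_{H,1,p}$. For the right-hand side, the key step is to check that the linear operator $\bfh\mapsto \BB_{H,T}^*\bfh$, viewed via \eref{e.2.9} (or equivalently via the Weyl fractional derivative representation after integration by parts), is bounded from $L^2([0,T])$ into $L^2([0,T])$ with a constant depending only on $H$ and $T$. This is where the fractional calculus tools of Section 2.1, together with the weights $s^{1/2-H}$ and $u^{H-1/2}$, must be invoked carefully: rewriting the derivative $\frac{d}{ds}\int_s^T(u-s)^{1/2-H}u^{H-1/2}D_u^lF\,du$ as a fractional Weyl derivative $D_{T-}^{1/2-H}(\cdot)$ applied to $u^{H-1/2}D_u^lF$ (up to a constant) makes the bound a direct consequence of the Hardy-Littlewood inequality for fractional integrals. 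This boundedness, together with the $L^p(\Om;L^2([0,T]))$ convergence of $D^l_\cdot F_n$ to $D^l_\cdot F$, yields the convergence of the right-hand side of \eref{e.2.18}, proving the first identity. For the second identity \eref{e.2.19}, the operator appearing is the composition of $\BB_{H,T}^*$ with the convolution operator $\bfh\mapsto\int_0^T\phi(t-s)\bfh(s)ds$; Proposition \ref{p.2.3} and standard estimates for $\phi$ show that this composition is again bounded, and the same limiting argument applies.

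The main obstacle is the boundedness of $\BB_{H,T}^*$ on $L^2([0,T])$. Because of the factor $s^{1/2-H}$ outside the derivative and the singular weight $u^{H-1/2}$ inside, some care is needed to rewrite the expression in a form in which the fractional integration inequalities apply cleanly; once this is done, the remainder of the argument is a routine density and closure step which only uses that $D^l$ and $D^{H,l}$ are closed operators on $\DD_{1,p}$ and $\DD_{H,1,p}$ respectively.
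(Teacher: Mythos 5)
Your overall strategy is exactly the paper's: the identities are verified for $F\in\cP$ by the chain--rule computation immediately preceding the proposition, and the paper then simply says ``by a limiting argument.'' So the question is only whether your filling-in of that limiting argument works, and here there is a genuine error. Your declared key step --- that $\BB_{H,T}^*$ is bounded from $L^2([0,T])$ into $L^2([0,T])$, ``a direct consequence of the Hardy--Littlewood inequality for fractional integrals'' --- is false. Look at the orders: the kernel $(u-s)^{1/2-H}$ produces a fractional integral of order $\frac32-H$, and the outer $\frac{d}{ds}$ then lowers this by $1$, so $\BB_{H,T}^*$ is (up to the smooth weights $s^{1/2-H}$ and $u^{H-1/2}$) a fractional \emph{differentiation} operator of order $H-\frac12>0$. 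Hardy--Littlewood controls fractional integrals such as $\bgamma_{H,T}^*$, which is the smoothing operator of order $H-\frac12$ going the other way; it says nothing about $\BB_{H,T}^*$. In fact $\BB_{H,T}^*$ maps $L^2([0,T])$ isometrically onto the Hilbert space associated with $B^H$ (the completion of step functions under the $\phi$-inner product), which for $H>\frac12$ strictly contains $L^2([0,T])$ and even contains non-function distributions; so for a generic $\bfh\in L^2$ the expression $\BB_{H,T}^*\bfh$ need not be an $L^2$ function at all, and no $L^2\to L^2$ bound can hold.

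The repair is to replace boundedness by closedness, which is what the hypothesis $F\in\DD_{1,p}\cap\DD_{H,1,p}$ is there for. Choose $F_n\in\cP$ converging to $F$ simultaneously in $\|\cdot\|_{1,p}$ and $\|\cdot\|_{H,1,p}$ (this requires density of $\cP$ in the intersection under the sum of the two norms, which should be stated, not just density in each space separately). Then $D^l_\cdot F_n\to D^l_\cdot F$ in $L^p(\Om;L^2)$, and --- crucially --- $\BB_{H,T}^*D^l_\cdot F_n=D^{H,l}_\cdot F_n\to D^{H,l}_\cdot F$ in $L^p(\Om;L^2)$ as well, the convergence of the left-hand side being supplied by the $\DD_{H,1,p}$ convergence rather than by any mapping property of $\BB_{H,T}^*$. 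Since the fractional derivative $\BB_{H,T}^*$ is a closed operator on its natural $L^2$ domain, these two convergences force $D^l_\cdot F\in\mathrm{Dom}(\BB_{H,T}^*)$ and $\BB_{H,T}^*D^l_\cdot F=D^{H,l}_\cdot F$, which is \eref{e.2.18}; then \eref{e.2.19} follows by applying the convolution with $\phi$, which \emph{is} a genuinely smoothing (order $2H-1$) and hence harmless operation. Your closing sentence about ``closed operators'' gestures at the right mechanism but attaches it to $D^l$ and $D^{H,l}$ instead of to $\BB_{H,T}^*$, while the step you single out as the main obstacle is the one that cannot be carried out as stated.
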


\begin{prop} \label{p.2.5}  Let  $F_t=\sum_{j=1}^m \int_0^t f_j(s)  dB_j^H(s)$ and $G_t= \sum_{j=1}^m \int_0^t g_j(s)  dW_j(s) $, where $f_1, \cdots, f_m$ satisfy
the conditions in Proposition \ref{p.2.4} and let $g_1, \cdots, g_m$ be  continuous
adapted processes.    Then
\begin{equation}
d(F_tG_t)=FdG_t+G_tdF_t\,.
\end{equation}
\end{prop}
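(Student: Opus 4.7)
Since $F_0=G_0=0$, the stated differential identity is equivalent to the integrated version
\begin{equation*}
F_tG_t \;=\; \sum_{j=1}^m\int_0^t F_s g_j(s)\,dW_j(s) \;+\; \sum_{j=1}^m\int_0^t G_s f_j(s)\,dB_j^H(s).
\end{equation*}
The plan is to prove this by a Riemann-sum approximation and show that the cross-variation between $F$ and $G$ vanishes, which is the reason no extra correction term appears. The heuristic is that $F$ has paths of finite $p$-variation for some $p<2$ (being an integral against $B^H$ with $H>1/2$), while $G$ has finite quadratic variation, so their covariation must be zero.

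Fix a partition $\pi=\{0=t_0<t_1<\cdots<t_n=t\}$ of $[0,t]$ and decompose
\begin{equation*}
F_tG_t \;=\; \sum_{i=0}^{n-1}F_{t_i}(G_{t_{i+1}}-G_{t_i}) \;+\; \sum_{i=0}^{n-1}(F_{t_{i+1}}-F_{t_i})G_{t_i} \;+\; R_\pi,
\end{equation*}
with $R_\pi:=\sum_i(F_{t_{i+1}}-F_{t_i})(G_{t_{i+1}}-G_{t_i})$. As $|\pi|\to 0$, the first sum converges in $L^2$ to $\sum_j\int_0^t F_s g_j(s)\,dW_j(s)$ by the standard It\^o theory, since $F$ is continuous and adapted and $g_j$ is continuous adapted. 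The second sum should converge to $\sum_j\int_0^t G_s f_j(s)\,dB_j^H(s)$; this step is obtained by first proving convergence for step-process $f_j$ and then passing to the limit via the quantitative pathwise estimate \eref{e.2.5} of Proposition~\ref{p2}, combined with \eref{ito-path} to reconcile pathwise and Skorohod integrals.

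The crux is to show that $R_\pi\to 0$. By Cauchy-Schwarz,
\begin{equation*}
|R_\pi|\;\le\;\Bigl(\sum_i|F_{t_{i+1}}-F_{t_i}|^2\Bigr)^{1/2}\,\Bigl(\sum_i|G_{t_{i+1}}-G_{t_i}|^2\Bigr)^{1/2},
\end{equation*}
and the second factor converges in $L^1$ to $\bigl(\int_0^t\sum_j g_j(s)^2\,ds\bigr)^{1/2}$, hence is tight. For the first factor, the process $F$ admits an almost surely $\beta$-H\"older continuous version for some $\beta\in(1/2,H)$, as a consequence of Kolmogorov's criterion applied to estimates coming from \eref{e.2.5} for $dB^H$-integrals. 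Therefore
\begin{equation*}
\sum_i|F_{t_{i+1}}-F_{t_i}|^2 \;\le\; \|F\|_\beta^2\cdot t\cdot|\pi|^{2\beta-1}\;\longrightarrow\;0,
\end{equation*}
since $2\beta-1>0$.

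The main obstacle is to verify this H\"older regularity rigorously for random integrands $f_j$ satisfying only the integrability conditions of Proposition~\ref{p.2.4}, and to make precise the convergence of the second Riemann sum to the Skorohod $dB^H$-integral. The cleanest route is to establish the product formula first for step-process $f_j$ (where the classical It\^o product rule applied to $G$ and linearity of the $dB^H$-integral reduce matters to a direct calculation), and then pass to the limit using \eref{e.2.5}, \eref{ito-path}, and the It\^o isometry to control both sides uniformly in the approximation.
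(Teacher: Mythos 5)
Your overall strategy --- the two forward Riemann sums plus a cross term $R_\pi$, It\^o convergence for the $dW$ sum, and killing $R_\pi$ by playing the $\beta$-H\"older regularity of $F$ (with $\beta>1/2$) against the finite quadratic variation of $G$ --- is the natural one, and it is essentially everything the paper omits: the paper's entire proof is a one-line citation of Theorem 2.1 of \cite{HZ}, which proceeds along these same lines. Your estimate $\sum_i|F_{t_{i+1}}-F_{t_i}|^2\le\|F\|_\beta^2\,t\,|\pi|^{2\beta-1}$ is correct, and reducing to step integrands before passing to the limit is a reasonable way to handle the regularity of $F$ under the bare integrability hypotheses of Proposition \ref{p.2.4}, though that reduction is where most of the remaining work sits.

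There is, however, one genuine gap: the identification of the limit of the second Riemann sum. Forward sums $\sum_iG_{t_i}(F_{t_{i+1}}-F_{t_i})$ converge, when they converge, to the \emph{pathwise} integral $\sum_j\int_0^tG_sf_j(s)\,d^\circ B_j^H(s)$, not to the Skorohod integral $\sum_j\int_0^tG_sf_j(s)\,dB_j^H(s)$ appearing in your integrated identity. By \eref{ito-path} the two differ by $\sum_j\int_0^t\DD_s^j\bigl(G_sf_j(s)\bigr)\,ds$, and this trace term does not vanish in general because $\DD_s^jG_s\neq0$. Concretely, take $m=1$ and $f_1=g_1=1$, so $F=B_1^H$ and $G=W_1$: your identity reads $B_1^H(t)W_1(t)=\int_0^tB_1^H\,dW_1+\int_0^tW_1\,dB_1^H$, whose right-hand side has expectation $0$ (an It\^o integral and a divergence are both centered), while $\EE\bigl[B_1^H(t)W_1(t)\bigr]=\int_0^tZ_H(t,s)\,ds>0$ by \eref{e.2.3a}. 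So the product rule is only correct if the $G\,dF$ term is read with the pathwise differential $d^\circ B^H$ --- which is how the proposition is actually used in Section 5.2, where $dp(t)$ carries a $\sigma_x^{*\top}(t)p(t)\,d^\circ B^H(t)$ term --- and your appeal to \eref{ito-path} ``to reconcile pathwise and Skorohod integrals'' points at the issue without resolving it. You should either state and prove the pathwise version, or carry the correction $\sum_j\int_0^t\DD_s^j(G_sf_j(s))\,ds$ explicitly; as written, the Skorohod form of your target identity is false.
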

\begin{proof} We can prove this proposition in the same way as the proof for
Theorem 2.1 of \cite{HZ}.
\end{proof}

\section{Systems driven by Brownian motions}
\setcounter{equation}{0}


In this section, we shall use our
approach of conditioning and Malliavin calculus to deduce the maximum principle for partially observed controlled system driven by standard Brownian
motions.  The complete information case will also be deduced.
In this classical case, our approach seems to be new,
straightforward and simpler.

\subsection{General stochastic control with partial information} In this subsection
we shall obtain a maximum principle when only partial information is available.
This means that the control $u_t$ may not necessarily depends on full information
determined by the Brownian motions  $\{W(t): 0\le t\le T\}$.    We assume in this subsection
that $\left(\cG_t\right)_{0\le t\le T} $ is any right continuous filtration which is contained
in the $\si$-algebra $\left(\cF  _t\right)_{0\le t\le T} $ generated by  $\{W(t): 0\le t\le T\}$.

The space of admissible controls is defined as
\begin{eqnarray*}
U[0,T]
&\stackrel{\Delta}{=}& \Big\{u: [0,T]\times \Omega\to \RR^d\ |
\ ~ u~ {\rm
is}~ {\cal G}\hbox{-}{\rm adapted \ stochastic \ process} \quad\\
&&\quad\qquad  {\rm and}\quad \EE \left( \int_0^T|u(t)|^2dt\right) <+\infty
\Big\}.
\end{eqnarray*}

To describe our stochastic control problem,  we need to introduce four functions
$b$, $\si$, $l$,   and $h$. They are assumed to satisfy the following conditions.
\begin{description}
\item[(H1)]   The functions
$b :[0,T]\times \RR^n\times \RR^d \to \RR^n$,    $\sigma=(\sigma^1,\cdots,\sigma^m):
[0,T]\times \RR^n\times \RR^d \to \RR^{n\times m}$,  and  $h: \RR^n\to \RR$ are continuous with respect variables $t$, $x$, and $u$
and are
continuously differentiable with respect to $x, u$ for all $t\in [0, T]$.
%
%
%

 Denote
\[
b_x(t, x, u)=\left( \frac{\partial b_i(t,x,u)}{\partial x_j}\right)_{1\le i, j \le n}\,,
\quad b_u(t, x, u)=\left( \frac{\partial b_i(t,x,u)}{\partial u_j}\right)_{1\le i\le n, 1\le  j \le d}
\]

\item[(H2)]   We assume that
there is a constant $C>0$ such that
\[
 \sup_{t\in [0, T], x\in\RR^n\,, u\in \RR^d} |b_x(t, x, u) |+ | b_u(t, x, u)|
 +\sum_{j=1}^m |\sigma_x^j (t, x, u)|+\sum_{j=1}^m |\sigma_u^j (t, x, u)| \le C\,.
\]

\item[(H3)] We assume that $ l:[0,T]\times \RR^n\times \RR^d \to \RR $
and $h:\RR^n\rightarrow \RR$ be continuously differentiable with bounded derivatives.

[In what follows throughout this paper, we shall use $C$ to denote a generic constant which may have different value at different
occurrences.]
\end{description}

The  controlled stochastic control system is described as the following
stochastic differential equation:
\begin{eqnarray}
 \left\{\begin{array}{rcl}
dx(t)&=&b(t,x(t),u(t))dt+\sum\limits_{j=1}^m\sigma^j(t,x(t),u(t))dW_j(t), \\
x(0)&=&x_0\,, \end{array}\right. \label{stateeq}
\end{eqnarray}
where $x_0$ is a given vector in $\RR^n$.
For a given  $u\in U[0, T]$
the existence and uniqueness of the solution $x^u(t)$  to the above equation
follows from standard theory of stochastic differential equations. For simplicity,
we sometimes omit the explicit dependence of $x(t)$ on $u$,  namely, we write
$x(t)=x^u(t)$.
The cost functional  we shall deal with is
\begin{eqnarray}\label{e.3.2}
J(u(\cdot))=J(x^u(\cdot), u(\cdot))=\EE
\left\{\int_0^Tl(t,x^u(t),u(t))dt+h(x^u(T))\right\}.
\end{eqnarray}

The first optimal control problem  studied in this paper is   to
minimize the cost functional $J(u(\cdot))$ over $U[0,T]$.   This
means that we want to find the  optimal control $u^*(\cdot) \in U[0,
T]$ satisfying
\begin{equation}
J(u^*(\cdot)) =\inf\limits_{u(\cdot)\in U[0,T]}J(u(\cdot))\,. \label{e.3.3}
\end{equation}
If such an optimal control $u^*(\cdot)$ exists,  then
the corresponding state  $x^*(\cdot)=x^{u^*} (\cdot)$ is called  an  optimal state process.
  $(x^*(\cdot), u^*(\cdot))$ is   called an optimal pair   for  the optimal control problem described by \eref{stateeq},
  \eref{e.3.2}    and \eref{e.3.3}.

Assume   $(x^*(\cdot),u^*(\cdot))$  is  an  optimal  pair.  Namely, $J(u(\cdot))$ attains
its minimum  at $u^*(\cdot)$.    From the general theory of functional analysis, we know that $u^*(\cdot)$ is a critical point of $J(u(\cdot))$.  We are interested in finding a necessary condition  satisfied by
$(x^*(\cdot),u^*(\cdot))$. For any $\e\in \RR$ and $ v(\cdot)\in U[0,T]$,  let $u^\e(\cdot)=u^*+\e v(\cdot)$.  It is easy to see that  $u^\e(\cdot) \in  U[0,T]$, and there is a unique $x^\e(\cdot)$ satisfying the
state equations (\ref{stateeq})  with control $u$ replaced by $u^\e(\cdot)$.

Denote
\begin{eqnarray}
b_x^*(t)
&=& b_x(t, x^*(t), u^*(t))\,,\quad b_u^*(t)=b_u(t, x^*(t), u^*(t))\,,\nonumber\\
\sigma_x^{j, *}(t)&=&  \sigma_x^{j,*}(t, x^*(t), u^*(t))\,,
\quad \sigma_u ^{j, *}(t)= \sigma_u^{j, *}(t, x^*(t), u^*(t))\,.
\end{eqnarray}
The following lemma can be   proved easily and it is necessary in
obtaining the maximum principle for the stochastic control problem
\eref{stateeq}-\eref{e.3.3}.
\begin{lemma}\label{lemma3}
The limit  $y(t)=\lim\limits_{\varepsilon\to 0}\frac{x^\e(t)-x^*(t)}{\varepsilon}$ exists in $L^2$  and
$y(t)$ satisfies the following equations:
\begin{eqnarray}\label{variationeq1}
\left\{\begin{array}{rcl}
dy(t)&=&[b_x^*(t)y(t)+b_u^*(t)v(t)]dt+
\sum\limits_{j=1}^m[\sigma_x^{j, *}(t)y(t)+\sigma_u^{j, *}(t)v(t)]dW_j(t),\\[-0.5em]\\
y(0)&=&0.\end{array}\right.
\end{eqnarray}
\end{lemma}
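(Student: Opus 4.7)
The plan is to define $y^\varepsilon(t):=(x^\varepsilon(t)-x^*(t))/\varepsilon$, identify the SDE it satisfies via a first-order Taylor-type expansion of $b$ and $\sigma^j$, and then pass to the limit in $L^2$ by a standard BDG plus Gronwall argument. A preliminary a priori bound is needed first: subtracting the state equations for $x^\varepsilon$ and $x^*$, using the Lipschitz estimates from (H2) on both arguments, and applying Gronwall yields
\[
\EE\Bigl[\sup_{0\le t\le T}|x^\varepsilon(t)-x^*(t)|^2\Bigr] \le C\varepsilon^2\,\EE\int_0^T|v(t)|^2dt\,.
\]

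Next I would write, by the fundamental theorem of calculus,
\[
b(t,x^\varepsilon,u^*+\varepsilon v)-b(t,x^*,u^*)=\varepsilon\bigl[A^\varepsilon_b(t)\,y^\varepsilon(t)+B^\varepsilon_b(t)\,v(t)\bigr],
\]
with
\[
A^\varepsilon_b(t)=\int_0^1 b_x\bigl(t,x^*+\theta(x^\varepsilon-x^*),u^*+\theta\varepsilon v\bigr)d\theta
\]
and an analogous expression for $B^\varepsilon_b$; expansions of the same shape apply to each $\sigma^j$, producing bounded coefficients $A^\varepsilon_{\sigma^j},B^\varepsilon_{\sigma^j}$. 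Dividing the difference of the state equations by $\varepsilon$ then gives
\[
dy^\varepsilon(t)=\bigl[A^\varepsilon_b(t)y^\varepsilon(t)+B^\varepsilon_b(t)v(t)\bigr]dt+\sum_{j=1}^m\bigl[A^\varepsilon_{\sigma^j}(t)y^\varepsilon(t)+B^\varepsilon_{\sigma^j}(t)v(t)\bigr]dW_j(t),
\]
with $y^\varepsilon(0)=0$. By the uniform bound in (H2), another Gronwall application yields $\sup_\varepsilon\EE[\sup_t|y^\varepsilon(t)|^2]<\infty$.

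Let $y(t)$ denote the unique solution of the linear equation \eqref{variationeq1}, whose existence and uniqueness are immediate from (H2). To conclude, I would split each coefficient difference as
\[
A^\varepsilon_b(t)y^\varepsilon(t)-b_x^*(t)y(t)=A^\varepsilon_b(t)\bigl(y^\varepsilon(t)-y(t)\bigr)+\bigl(A^\varepsilon_b(t)-b_x^*(t)\bigr)y(t),
\]
and treat the diffusion terms analogously. Applying BDG and Gronwall to the resulting SDE for $y^\varepsilon-y$ reduces the convergence $\EE[\sup_t|y^\varepsilon(t)-y(t)|^2]\to 0$ to showing
\[
\EE\int_0^T\bigl|(A^\varepsilon_b(t)-b_x^*(t))\,y(t)\bigr|^2dt\longrightarrow 0
\]
together with the analogous limits for $B^\varepsilon_b-b_u^*$ and the two $\sigma^j$-coefficients. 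This is the main technical step, but it follows from dominated convergence: the a priori estimate above gives $x^\varepsilon\to x^*$ in probability uniformly in $t$, the continuity of $b_x,b_u,\sigma^j_x,\sigma^j_u$ in $(x,u)$ from (H1) then gives pointwise convergence of the integrands in $\theta$, and (H2) supplies a uniform dominating function. Everything else is routine.
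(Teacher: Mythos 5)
Your argument is correct and is the standard variational-equation proof (Taylor expansion with integral remainder, a priori estimate $\EE\sup_t|x^\e(t)-x^*(t)|^2\le C\e^2$, then BDG plus Gronwall on $y^\e-y$ with the remainder handled by dominated convergence using the uniform bounds in (H2)). The paper itself offers no proof of this lemma --- it simply asserts that it ``can be proved easily'' --- so there is nothing to contrast with; your write-up supplies exactly the routine argument the authors are alluding to, and no step in it is problematic.
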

We need
to solve the above linear stochastic differential equations (with random coefficients).
In particular,   we need to express $y$  in  an explicit form of $v$.  For this reason
we  consider   the following   linear matrix-valued
stochastic differential equations.
\begin{eqnarray}
\left\{\begin{array}{rcl}
d\Phi(t)&=&b_x^*(t)\Phi(t)dt+\sum\limits_{j=1}^m\sigma_x^{j, *}(t)\Phi(t)dW_j(t),\\
\Phi(0)&=&I,
\end{array}\right. \label{e.3.6}
\end{eqnarray}
From the basic theory of stochastic differential equations, it is well-known that
this equation has a unique solution, denoted by $\Phi(t)$.
It is easy  to verify that   $\Phi^{-1}(t) $  exists and is  the unique solution of the following stochastic differential equations  (\cite{YZ}):
\begin{eqnarray}\left\{\begin{array}{rcl}
d\Phi^{-1}(t)&=&\Phi^{-1} (t)\left(-b_x^*(t)+\sum\limits_{j=1}^m\left(\sigma_x^{j, *}(t)\right)^2\right)dt-\sum\limits_{j=1}^m\Phi^{-1}
(t)\sigma_x^{j, *}(t)dW_j(t),\\
\Phi^{-1}(0)&=&I\,.
\end{array}\right.\label{e.3.7}
\end{eqnarray}

From the  It\^o's formula  we can obtain the solution of  the equation {\rm  (\ref{variationeq1})}
by using
 $\Phi(t), \Phi^{-1}(t)$.
\begin{eqnarray}
y(t)&=&\Phi(t)\int_0^t\Phi^{-1}(s)\left(b_u^*(s)-\sigma_x^*(s)\sigma_u^*(s)\right)v(s)ds\nonumber\\
&&\qquad
+\sum\limits_{j=1}^m\Phi(t)\int_0^t\Phi^{-1}(s)\sigma_u^{j, *}(s)v(s)dW_j(s)\,,   \label{yeq1}
\end{eqnarray}
where
\begin{equation}
\sigma_x^*(s)\sigma_u^*(s)=\sum_{j=1}^m \sigma_x^{j,*}(s)\sigma_u^{j,*}(s)
\,.
\end{equation}

Since $u^*(\cdot)$ is an optimal control,  it is a critical point of the functional $J(u(\cdot))$, namely,
we have
\begin{eqnarray}
 \left.\frac d{d\e}J(u^*(\cdot)+\e v(\cdot))\right |_{\e=0}
 &=& 0\,.
\label{e.3.9}
\end{eqnarray}
But
\begin{eqnarray*}
 \left.\frac d{d\e}J(u^*(\cdot)+\e v(\cdot))\right |_{\e=0}
 &=& \EE \left(\int_0^T\left(l_x^{*\top}(t)y(t)+l_u^{*\top}(t)v(t)\right)dt\right)
+\EE \left( h_x^\top(x^*(T))y(T)\right)  \,.
\end{eqnarray*}
Using  (\ref{yeq1})  for the solution $y(t)$,   we have
\begin{eqnarray*}
 &&\left.\frac d{d\e}J(u^*(\cdot)+\e v(\cdot))\right |_{\e=0}\\
 &=& \EE \int_0^T\left(l_x^{*\top}(t)\Phi(t)\left[ \int_0^t\Phi^{-1}(s)
 \left(b_u^*(s)-\sigma_x^*(s)\sigma_u^*(s)\right)
v(s)ds\right] +l_u^{*\top}(t)v(t)\right)dt\\
&~&+\EE \int_0^T\left(\sum\limits_{j=1}^ml_x^{*\top}(t)\Phi(t)\left[ \int_0^t\Phi^{-1}(s)
\sigma_u^{j, *}(s)v(s)dW_j(s)\right] \right)dt\\
&~&+\EE \left\{h_x^\top(x^*(T))\Phi(T)\left[ \int_0^T\Phi^{-1}(s)(b_u^*(s)-\sigma_x^*(s)
\sigma_u^*(s))v(s)ds\right] \right\}\\
&~&+\sum\limits_{j=1}^m\EE \left\{h_x^\top(x^*(T))\Phi(T)\left[ \int_0^T\Phi^{-1}(s)
\sigma_u^{j, *}(s)v(s)dW_j(s)\right] \right\}\,.
\end{eqnarray*}
Now we make use of the following identity from Malliavin calculus:
$$
\EE \left(F\int_0^Tg(t)dW_j(t)\right)=\EE \int_0^T\left({ D}  _t^j F\right) g(t)dt\,.
$$
Combining this identity and Fubini type theorem,   we have
\begin{eqnarray}
&&\left.\frac d{d\e}J(u^*(\cdot)+\e v(\cdot))\right |_{\e=0}\nonumber \\
&=&\EE \int_0^T\left(\int_s^T l_x^{*\top}(t)\Phi(t)
\Phi^{-1}(s)(b_u^*(s)-\sigma_x^*(s)\sigma_u^*(s))v(s)dt\right)ds\nonumber \\
&~&+\int_0^T\EE \left(\sum\limits_{j=1}^m\int_0^t{ D}  _s^j
\left(l_x^{*\top}(t)\Phi(t)\right)\Phi^{-1}(s)\sigma_u^{j,*} (s)v(s)ds\right)dt\nonumber \\
&~&+\EE \int_0^Tl_u^{*\top}(s)v(s)ds+\EE
\int_0^T h_x^\top(x^*(T))\Phi(T)\Phi^{-1}(s)(b_u^*(s)-\sigma_x^*(s)\sigma_u^*(s))v(s)ds\nonumber \\
&~&+\sum\limits_{j=1}^m\EE \int_0^T{ D}  _s^j
\left(h_x^\top(x^*(T))\Phi(T)\right)\Phi^{-1}(s)\sigma_u^{j,*} (s)v(s)ds\,.
\label{e.3.10}
\end{eqnarray}
Denote
\begin{eqnarray}
\Psi(T,s) &:=&  \left(\int_s^Tl_x^{*\top}(t)\Phi(t)dt\right)\Phi^{-1}(s)(b_u^*(s)
-\sigma_x^*(s)\sigma_u^*(s))+l_u^{*\top}(s)
\nonumber  \\
&~& +\sum\limits_{j=1}^m\left(\int_s^T{ D}  _s^j \left(l_x^{*\top}(t)\Phi(t)dt\right)
\Phi^{-1}(s)\sigma_u^{j, *}(s)+{ D}  _s^j \left(h_x^\top(x^*(T))\Phi(T)\right)\Phi^{-1}(s)
\sigma_u^{j, *}(s)\right)\, \nonumber\\
&~&+h_x^\top(x^*(T))\Phi(T)\Phi^{-1}(s)(b_u^*(s)-\sigma_x^*(s)\sigma_u^*(s)).
\label{e.3.11}
\end{eqnarray}
Then from \eref{e.3.9} and \eref{e.3.10}   we have
\begin{eqnarray*}
\left.\frac d{d\e}J(u^*(\cdot)+\e v(\cdot))\right |_{\e=0}
&=&\EE \left[ \int_0^T \Psi(T, s) v(s)ds\right]\\
&=& \EE \left[ \int_0^T \EE\left\{ \Psi(T, s)\big| \cG_s\right\}  v(s)ds\right] =0\,.
\end{eqnarray*}
Since the above identity holds true for all $\left(\cG_t\right)_{
0\le t\le T} $ adapted process $v\in U[0, T]$, we have
\begin{equation}
 \EE\left\{ \Psi(T, s)\big| \cG_s\right\} = 0 \quad \forall \ 0\le s\le  T\,. \label{e.3.13}
 \end{equation}
 We can also write the above equation as
 \begin{equation}
 \EE\left\{ \Psi(T, s)^\top \big| \cG  _s\right\} = 0 \quad \forall \ 0\le s\le  T\,. \label{e.3.14}
 \end{equation}
Denote $\EE^{\cG  _t}(X)= \EE\left\{ X\big| \cG  _t\right\} $.  Then  we can state \eref{e.3.14} as
the following general maximum principle (e.g. the equation \eref{h_u00}  below).
\begin{theorem}\label{t1}
Let $(x^*(\cdot), u^*(\cdot))$ be an optimal pair of the
control problem \eref{stateeq}-\eref{e.3.3}.  Define
\begin{eqnarray*}
\left\{\begin{array}{rcl}
P(t)&=& {\Phi^\top} ^{-1}(t)
\int_t^T {\Phi^\top}(s)l_x^*(s)ds+ {\Phi^\top}^{-1}(t) {\Phi^\top}(T)h_x(x^*(T))\, , \\
Q_j(t)&=&-\sigma_x^{j,*}(t)P(t)+ {\Phi^\top}^{-1}(t)
 { D}  _t\left( {\Phi^\top}(T)h_x(x^*(T))\right)+\int_t^T{ D}  _t\left( {\Phi^\top}(s)l_x^*(s)\right)ds\,.
\end{array}\right.
\end{eqnarray*}
Then
\begin{eqnarray}\label{h_u00}
\EE^{\cG  _t}\left[b_u^{*\top}(t) P(t)+\sum\limits_{j=1}^m\sigma_u^{j,*\top}(t) Q_j(t)+l_u^*(t)
\right]=0\ \quad\forall  \ \ t\in [0, T]
\end{eqnarray}
almost surely.
\end{theorem}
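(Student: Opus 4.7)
The plan is to recognize that virtually all the analytic work has already been done in the derivation preceding the statement: the optimality condition (\ref{e.3.13})--(\ref{e.3.14}) asserts $\EE[\Psi(T,s)^\top\mid \cG_s]=0$ for almost every $s\in[0,T]$, with $\Psi(T,s)$ as in (\ref{e.3.11}). What remains is purely algebraic: to rearrange $\Psi(T,s)^\top$ into the form $b_u^{*\top}(s)P(s)+\sum_j \sigma_u^{j,*\top}(s)Q_j(s)+l_u^*(s)$ with the $P$ and $Q_j$ claimed in the theorem.

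First I would transpose the four lines of (\ref{e.3.11}) term by term, using that the Malliavin derivatives $D_s^j$ commute with transposition and that $(\sigma_x^*(s)\sigma_u^*(s))^\top=\sum_{j=1}^m \sigma_u^{j,*\top}(s)\sigma_x^{j,*\top}(s)$. Then I would group the resulting summands by their rightmost factor: terms ending in $b_u^{*\top}(s)$, terms ending in $\sigma_u^{j,*\top}(s)$ for each $j$, and the standalone $l_u^*(s)$. The coefficient multiplying $b_u^{*\top}(s)$ is
\[
\Phi^{-\top}(s)\int_s^T \Phi^\top(t)l_x^*(t)\,dt+\Phi^{-\top}(s)\Phi^\top(T)h_x(x^*(T)),
\]
which is exactly $P(s)$. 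The coefficient multiplying $\sigma_u^{j,*\top}(s)$ combines the $-\sigma_x^{j,*\top}(s)$ piece coming from $-\sigma_x^*(s)\sigma_u^*(s)$ in lines 1 and 4 of (\ref{e.3.11}) with the two Malliavin pieces from line 3, and after pulling out $P(s)$ in the first piece this is precisely $Q_j(s)$.

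Having verified the pointwise identity $\Psi(T,s)^\top=b_u^{*\top}(s)P(s)+\sum_{j=1}^m \sigma_u^{j,*\top}(s)Q_j(s)+l_u^*(s)$, I would then invoke (\ref{e.3.14}): taking $\EE^{\cG_s}$ of both sides yields (\ref{h_u00}) almost surely for every $t=s\in[0,T]$. (Since $b_u^*(t)$, $\sigma_u^{j,*}(t)$ are $\cG_t$-measurable only if $u^*$ is $\cG$-adapted, which is part of the definition of $U[0,T]$; the rest of the integrand need not be $\cG_t$-measurable and remains under the conditional expectation.)

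I do not expect any deep obstacle here: the content of the theorem is just a clean re-packaging of equations (\ref{e.3.11})--(\ref{e.3.14}). The only care needed is in bookkeeping the transposes and keeping track of which factors are row vectors, column vectors, or $n\times d$ and $n\times n$ matrices, and in noting that Malliavin differentiation of the matrix product $\Phi^\top(t)l_x^*(t)$ is defined entrywise so that $D_t^j$ passes through transposition. The genuinely nontrivial inputs --- Lemma \ref{lemma3} giving the variational equation, the explicit representation (\ref{yeq1}) of $y(t)$ in terms of $\Phi$ and $\Phi^{-1}$, the Malliavin duality $\EE(F\int_0^T g\,dW_j)=\EE\int_0^T D_t^j F\cdot g\,dt$, and the Fubini exchange producing (\ref{e.3.10}) --- have already been carried out before the statement, so the proof of Theorem \ref{t1} itself reduces to identification of $P$ and $Q_j$ followed by one application of the tower property.
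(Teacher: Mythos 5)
Your proposal is correct and follows exactly the paper's own route: the paper offers no separate proof of Theorem \ref{t1}, stating it as a direct repackaging of the optimality identity \eref{e.3.14} with $\Psi(T,s)$ from \eref{e.3.11}, which is precisely the transposition-and-regrouping you carry out to identify $P$ and $Q_j$. (Your bookkeeping in fact supplies the transposes, e.g.\ $-\sigma_x^{j,*\top}(s)P(s)$, that the theorem's displayed formula for $Q_j$ writes somewhat loosely.)
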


\subsection{Stochastic control with complete information}

If $\cF_t=\si(W_1(s)\,,\cdots\,,
W_m(s)\,,  0\le s\le t)$ is the $\si$-algebra generated
by the Brownian motion $W(s)=(W_1(s)\,, \cdots\,, W_m(s))$,
then the above equation \eref{h_u00}  for the maximum principle  can be simplified.

First note that $b_u^{*\top}(t)$, $  \sigma_u^{1,*\top}(t)$,
$\cdots$, $  \sigma_u^{1,*\top}(t)$, and $l_u^*(t)$ are $\cF  _t$-adapted,
then the equation \eref{h_u00}  can be written as
\begin{equation}
b_u^{*\top}(t) p(t) +\sum\limits_{j=1}^m\sigma_u^{j,*\top}(t)
q_j(t)  +l_u^*(t)
 =0\ \quad\forall  \ \ t\in [0, T]\,,
\end{equation}
where we
denote $p(t)=\EE^{\cF  _t}\left[ P(t)\right]$ and $q_j(t)=
\EE^{\cF  _t}\left[Q_j(t)\right]$.  From the definition of $P(t)$ and $Q_j(t)$,   we have
\begin{eqnarray}\label{pqeq}
\left\{\begin{array}{rcl}
p(t)
&=&{\Phi^\top}^{-1}(t)\EE ^{{\cal F}_t}\left[\int_t^T\Phi^\top(s)l_x^{*}(s)ds
+\Phi^{\top}(T)h_x(x^*(T))\right],\\
q_j(t)&=&-\left(\sigma_x^{j, *}(t)\right) p(t)+{\Phi^\top}^{-1}(t)
\EE ^{{\cal F}_t}\Big[{ D}  _t\left(\Phi^\top(T)h_x(x^*(T))\right)
\\
&&\qquad\qquad \left.+\int_t^T{ D}  _t
\left(\Phi^\top(s)l_x^*(s)\right)ds\right],\qquad j=1,\cdots,m.
\end{array}\right.
\end{eqnarray}
\begin{lemma} \label{lemma2.3} If $p(t)$ and $q_j(t)$ are defined as above, then
\begin{equation}
q_j(t)=D_t^j p(t)\,.
\end{equation}
\end{lemma}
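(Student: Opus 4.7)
The plan is to apply $D_t^j$ directly to the formula for $p(t)$ given in (\ref{pqeq}) and check that the outcome matches the formula for $q_j(t)$. Three ingredients are needed: (i) the endpoint value of the Malliavin derivative of $\Phi^{-\top}(t)$, read off from the SDE \eref{e.3.7}; (ii) the commutation of $D_t^j$ with the conditional expectation $\EE^{\cF_t}$; and (iii) the Leibniz product rule together with passage of $D_t^j$ under a deterministic $ds$-integral.

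For (i), since $\Phi^{-1}$ solves the linear SDE \eref{e.3.7} with bounded coefficients under (H2), the standard theory of Malliavin calculus for Itô SDEs gives $D_r^j\Phi^{-1}(t)$ as the solution of a perturbed linear SDE starting at time $r$; letting $r\uparrow t$ yields
$$D_t^j\Phi^{-1}(t)=-\Phi^{-1}(t)\sigma_x^{j,*}(t),$$
and consequently $D_t^j\Phi^{-\top}(t)=-\sigma_x^{j,*\top}(t)\,\Phi^{-\top}(t)$. For (ii), the well-known identity $D_r\EE[F\mid\cF_s]=\EE[D_rF\mid\cF_s]$ valid for $r\le s$ specializes, at $r=s=t$, to $D_t^j\EE^{\cF_t}[\,\cdot\,]=\EE^{\cF_t}[D_t^j\,\cdot\,]$.

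Setting $X(t):=\int_t^T\Phi^\top(s)l_x^*(s)ds+\Phi^\top(T)h_x(x^*(T))$, we have $p(t)=\Phi^{-\top}(t)\EE^{\cF_t}[X(t)]$. Applying the product rule and then (i), (ii),
\begin{eqnarray*}
D_t^j p(t) &=& \bigl(D_t^j\Phi^{-\top}(t)\bigr)\EE^{\cF_t}[X(t)]+\Phi^{-\top}(t)\EE^{\cF_t}[D_t^j X(t)] \\
&=& -\sigma_x^{j,*\top}(t)\,p(t)+\Phi^{-\top}(t)\EE^{\cF_t}\!\!\left[\int_t^T D_t^j\bigl(\Phi^\top(s)l_x^*(s)\bigr)ds+D_t^j\bigl(\Phi^\top(T)h_x(x^*(T))\bigr)\right],
\end{eqnarray*}
which is exactly the expression defining $q_j(t)$ in (\ref{pqeq}). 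Note that no boundary contribution arises when $D_t^j$ is passed under the $ds$-integral: $D_t^j$ is a variational derivative, not a time derivative, so the moving lower limit $s=t$ produces no extra term.

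The main obstacle is technical: verifying that $\Phi(t)$, $\Phi^{-1}(t)$, $l_x(t,x^*(t),u^*(t))$, and $h_x(x^*(T))$ belong to the Malliavin-Sobolev space $\DD_{1,p}$ for $p$ large enough that the product rule, the Fubini-type interchange of $D_t^j$ with $\int_t^T\cdots ds$, the commutation with $\EE^{\cF_t}$, and the endpoint identity $D_t^j\Phi^{-1}(t)=-\Phi^{-1}(t)\sigma_x^{j,*}(t)$ are rigorously valid. All of these follow from (H1)--(H3) via standard moment estimates for linear SDEs with bounded coefficients together with the chain rule for the Malliavin derivative.
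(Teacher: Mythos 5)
Your proof is correct and follows essentially the same route as the paper's own (very terse) argument: both rest on the endpoint identity $D_t^j\Phi^{-1}(t)=-\Phi^{-1}(t)\sigma_x^{j,*}(t)$ read off from \eref{e.3.7} and on the commutation $D_t^j\EE^{\cF_t}=\EE^{\cF_t}D_t^j$ from Proposition 1.2.8 of \cite{N}, with the product rule and the interchange of $D_t^j$ with the $ds$-integral filling in the rest. Your version merely writes out the details the paper leaves implicit (and your transposed factor $-\sigma_x^{j,*\top}(t)p(t)$ is the consistent form of the term the paper records as $-\sigma_x^{j,*}(t)p(t)$).
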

\begin{proof} From \eref{e.3.7}  we see that
\[
D_t^j\Phi^{-1}(t)=-\Phi^{-1}(t) \si_x^{j, *}(t)\,.
\]
On the other hand, from Proposition 1.2.8 of \cite{N}, we see that
\[
  \EE^{\cF  _t}(D_t^j X)=D_t^j \EE^{\cF  _t}(X)\,.
\]
This proves the lemma easily.
\end{proof}

From the equation  (2.11) of \cite{HNS}, we see that $p(t)$ and $(q_1(t)\,, \cdots\,, q_m(t))
$ is the unique solution
of the following backward stochastic differential equations.
\begin{eqnarray}\label{bsde1}
\left\{\begin{array}{rcl}
-dp(t)&=&(b_x^{*\top}(t)p(t)+\sum\limits_{j=1}^m\sigma_x^{j, *\top}(t)q_j(t) +
l_x^*(t))dt-\sum\limits_{j=1}^mq_j(t)dW_j(t)\,, \\
p(T)&=&h_x(x^*(T))\,.
\end{array}\right.
\end{eqnarray}

\begin{theorem}\label{t2}
Let $(x^*(\cdot), u^*(\cdot))$ be an optimal pair of the
control problem \eref{stateeq}-\eref{e.3.3}.  Let $p(t)$ and $(q_1(t), \cdots\,, q_m(t))$
be the unique solution pair to \eref{bsde1}.
Then
\begin{eqnarray}\label{h_u0}
\ b_u^{*\top}(t) p(t)+\sum\limits_{j=1}^m\sigma_u^{j,*\top}(t)  q_j(t) +l_u^*(t)
 =0\ \quad\forall  \ \ t\in [0, T]
\end{eqnarray}
almost surely.
\end{theorem}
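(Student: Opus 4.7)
The plan is to derive Theorem~\ref{t2} from Theorem~\ref{t1} by specializing to the complete-information filtration $\cG_t = \cF_t$ and then identifying the resulting conditional expectations with the unique solution of the linear BSDE~(\ref{bsde1}).

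First, I would apply Theorem~\ref{t1} with $\cG_t = \cF_t$. Since $b_u^*(t)$, $\sigma_u^{j,*}(t)$ and $l_u^*(t)$ are all $\cF_t$-adapted, they pass through the conditional expectation in (\ref{h_u00}), so that equation reduces to the pointwise identity (\ref{h_u0}) with $p(t) := \EE^{\cF_t}[P(t)]$ and $q_j(t) := \EE^{\cF_t}[Q_j(t)]$ given explicitly by (\ref{pqeq}). Lemma~\ref{lemma2.3} moreover gives $q_j(t) = D_t^j p(t)$.

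Second, I would check that the pair $(p, q_1, \dots, q_m)$ defined by (\ref{pqeq}) actually solves the BSDE (\ref{bsde1}). Setting $\xi := \Phi^\top(T) h_x(x^*(T)) + \int_0^T \Phi^\top(s) l_x^*(s)\,ds$, relation (\ref{pqeq}) can be rewritten as $\Phi^\top(t) p(t) = \EE^{\cF_t}[\xi] - \int_0^t \Phi^\top(s) l_x^*(s)\,ds$. By the Clark--Ocone formula the martingale $M(t) := \EE^{\cF_t}[\xi]$ admits the representation $dM(t) = \sum_j \EE^{\cF_t}[D_t^j \xi]\,dW_j(t)$, and by (\ref{pqeq}) one has $\Phi^{-\top}(t)\EE^{\cF_t}[D_t^j \xi] = q_j(t) + \sigma_x^{j,*}(t) p(t)$. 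Writing $dp(t) = \alpha(t)\,dt + \sum_j \beta_j(t)\,dW_j(t)$ and applying It\^o's product rule to $\Phi^\top(t) p(t)$ using the transpose of the dynamics (\ref{e.3.6}), the $dW_j$ coefficients force $\beta_j(t) = q_j(t)$, and the $dt$ coefficients then yield $\alpha(t) = -\bigl(b_x^{*\top}(t) p(t) + \sum_j \sigma_x^{j,*\top}(t) q_j(t) + l_x^*(t)\bigr)$. The terminal condition $p(T) = h_x(x^*(T))$ is read off directly from (\ref{pqeq}) at $t = T$. This is the linear-BSDE identification argument carried out in (2.11) of \cite{HNS}, to which one can simply appeal.

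The main obstacle is the algebraic bookkeeping in the It\^o expansion: one must track how the correction term $-\sigma_x^{j,*}(t) p(t)$ built into the definition of $q_j(t)$ in (\ref{pqeq}) interacts with the cross-variation $d[\Phi^\top, p]$ and with the drift term coming from $d\Phi^\top$, so that everything collapses to the clean linear coefficients of (\ref{bsde1}). One also needs sufficient integrability and Malliavin differentiability of $\xi$ to justify the Clark--Ocone representation, which is guaranteed by the boundedness of derivatives in (H1)--(H3). Once this identification is complete, uniqueness of the linear BSDE with bounded coefficients justifies stating Theorem~\ref{t2} via (\ref{bsde1}) rather than through the explicit conditional-expectation formulas (\ref{pqeq}), and (\ref{h_u0}) is simply a rewriting of the already established (\ref{h_u00}).
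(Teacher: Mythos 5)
Your proposal is correct and follows essentially the same route as the paper: specialize Theorem \ref{t1} to $\cG_t=\cF_t$, pull the adapted factors $b_u^{*\top}$, $\sigma_u^{j,*\top}$, $l_u^*$ out of the conditional expectation, and identify $(p,q_1,\dots,q_m)$ from \eref{pqeq} with the unique solution of \eref{bsde1}. The only difference is that the paper simply cites equation (2.11) of \cite{HNS} for this last identification (together with Lemma \ref{lemma2.3}), whereas you spell it out via the Clark--Ocone representation and It\^o's product rule applied to $\Phi^\top(t)p(t)$; your computation is consistent with the cited result.
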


\begin{remark}  The equations \eref{stateeq}, \eref{bsde1}, and
\eref{h_u0} is a system of coupled forward-backward stochastic
differential equations.  Usually they can be used to determine the
optimal control $u^*$ and the corresponding optimal state $x^*$. For
the convenience we can write them together as
\begin{equation}
\begin{cases}
dx^*(t)=b(t,x^*(t),u^*(t))dt+\sum\limits_{j=1}^m\sigma^j(t,x^*(t),u^*(t))dW_j(t), \\
x^*(0)=x_0\,,\\
-dp(t)=(b_x^{*\top}(t)p(t)+\sum\limits_{j=1}^m\sigma_x^{j, *\top}(t)q_j(t) +
l_x^*(t))dt-\sum\limits_{j=1}^mq_j(t)dW_j(t)\,, \\
p(T)=h_x(x^*(T))\\
b_u^{*\top}(t) p(t)+\sum\limits_{j=1}^m\sigma_u^{j,*\top}(t) q_j(t)
+l_u^*(t)
 =0\,,
 \end{cases}\label{bmmax}
\end{equation}
where $0\le t\le T$.
\end{remark}
The system of coupled forward-backward stochastic differential
equations  \eref{stateeq}, \eref{bsde1}, and \eref{h_u0}  can also be
written by using the so-called Hamiltonian.   Let
\begin{eqnarray*}
H(t,x,u,p,q)&:=&b(t,x,u)^\top
p(t)+\sum\limits_{j=1}^m\sigma^{j, \top}(t,x,u) q_j(t)+l(t,x,u),\\
&~& (t,x,u,p,q)\in [0,T]\times \RR^n\times \RR^d\times \RR^n\times
\RR^{n\times m}\,.
\end{eqnarray*}
Then the maximum principle \eref{bmmax}  can be
restated as
\begin{equation}
\begin{cases}\displaystyle
dx^*(t)=\frac{\partial }{\partial p} H(t,x^*(t),u^*(t),p(t),q(t))
dt+\sum_{j=1}^m \frac{\partial }{\partial q_j} H(t,x^*(t),u^*(t),p(t),q(t)) dW_j(t)  \\
x^*(0)=x_0\,,\\
\displaystyle  -dp(t)=\frac{\partial }{\partial x}
H(t,x^*(t),u^*(t),p(t),q(t)) dt
-\sum\limits_{j=1}^mq_j(t)dW_j(t)\,, \\
p(T)=h_x(x^*(T))\\ \\
\displaystyle   \frac{\partial }{\partial u} H
(t,x^*(t),u^*(t),p(t),q(t)) = 0 \,,
 \end{cases}
\end{equation}
where $0\le t\le T$.

\setcounter{equation}{0}

\section{Controlled stochastic differential systems driven by fractional  Brownian motions}

To obtain our main results of maximum principle for the system driven by fractional Brownian motions,  we need to develop some new results for controlled stochastic differential equations driven by fractional Brownian motions.  Let us recall that $B^H(t)=(B_1^H(t), \cdots, B_m^H(t)), 0\le t\le T$, is  an $m$-dimensional Brownian motion. Let $(\cG_t\,, 0\le t\le T)$ be a right continuous filtration contained in the filtration $(\cF_t\,, 0\le t\le T)$ generated by fractional  Brownian motions $B^H(t)$.

 First,   let us define our space of admissible controls:
\begin{eqnarray}
U[0,T]
&:=&\Bigg\{u| u: [0,T]\times \Omega \to \RR^d, \mathcal G\hbox{-adapted}, u\in C^\mu[0, T]\  \hbox{for some}\ \mu>1-H\,,
\nonumber\\
&&\hbox{there exist constant $C>0$,   $c>0$,  and $\be<H$}  \nonumber\\
&&\quad
\hbox{such that} \ \  \|u\|_{\mu, 0, T}\le C
e^{c \sum_{j=1}^m \|B_j^H\|_{\be, 0, T}}\nonumber\\
&&\quad  \hbox{and}\ \ \int_0^T\int_0^T \EE \left(|D_s^H u(t)|^2\right) dsdt<\infty
\Bigg\}\,.  \label{e.4.1}
\end{eqnarray}
%
Consider the following controlled stochastic differential equation driven by fractional
Brownian motion:
\begin{eqnarray}\left\{\begin{array}{rcl}
dx(t)&=&b(t,x(t),u(t))dt+\sum\limits_{j=1}^m\sigma^j(t,x(t),u(t))  d^\circ B_j^H(t),\\
x(0)&=&x_0\,. \end{array}\right.\label{5.3.2}
\end{eqnarray}
Here the integral with respect to fractional Brownian motion is in the pathwise sense (or the Stratonovich type integral).

We assume that  $b: [0,T]\times \RR^n\times \RR^d \to \RR^n$,
$\sigma:[0,T]\times \RR^n \times \RR^d   \to \RR^{n\times m}$
 are  some given continuous  functions
satisfying the following conditions.

\begin{description}
\item[(H4)] $b(t, x, u)$ is continuously differentiable with respect to $x$ and $u$.  Moreover,
there exists  a constant $L  $ such that the following properties hold:
\begin{eqnarray}
&&|b_x(t, x, u)|+|b_u(t, x, u)| \le L \,,
\nonumber\\
&& |b_x(t, x, u)-b_x(t, y, v)|+|b_u(t, x, u)-b_u(t, y, v)| \le L  (|x-y|+|u-v|) \,,  \nonumber\\
&&\qquad ~~ \forall x \ \in \RR^n, ~~ \forall \ \ u\in\RR^d, ~~ \forall t\in [0,T]\,.
\end{eqnarray}
\item[(H5)] $\sigma(t,x,u)$ is twice continuously differentiable in $x$ and $u$ and
there exist some constants $1-H<\ga<1$ and $0<\delta\le 1$,  and $L$  such that for each $i=1,\cdots, n$:
\begin{itemize}
\item[(i)]   The partial derivatives of $\sigma$ with respect to $x$ and $u$ are bounded:
\begin{eqnarray*}
  |\sigma_x(t, x, u)|+|\sigma_u(t, x, u)| &\le & L\,, \\
 |\sigma_{xx}(t, x, u)|+|\sigma_{uu}(t, x, u)|+\si_{xu}(t, x, u) | &\le &   L\,.
\end{eqnarray*}
\item[(ii)] H\"older continuity in time: $\forall x\in \RR^n, u\in \RR , \forall t,s\in [0,T]$,
\begin{align*}
 |\sigma(t,x,u)-\sigma(s,x,u)|+|\partial_{x }\sigma(t,x,u)-\partial_{x  }\sigma(s,x,u)|&&\\
 +|\partial_{u}\sigma(t,x,u)-\partial_{u}\sigma(s,x,u)|+ |\sigma_{xx}(t,x,u)-\sigma_{xx}(s,x,u)| &&\\
   +|\sigma_{xu}(t,x,u)-\sigma_{xu}(s,x,u)| + |\sigma_{uu}(t,x,u)-\sigma_{uu}(s,x,u)|
&\le & L |t-s|^\ga.
\end{align*}
\item[(iii)]   Lipschitz  continuity of second derivatives with respect to state and control variables.
\begin{eqnarray*}
&&|\sigma_{uu}(t,x,u)-\sigma_{uu}(t,y,v)|  +|\sigma_{xu}(t,x,u)-\sigma_{xu}(t,y,v)|\\
&\le&  L\left(|x-y|  +|u-v| \right) \,.
\end{eqnarray*}
\end{itemize}
\end{description}


If $b$ and $\si$ satisfy the above assumptions  (H4) and (H5) and if $u$ is an admissible control,
then   the coefficients $b(t, x)=b(t, u(t), x)$ and $\si(t,x)=\si(t, u(t), x)$ satisfy   the conditions $(H_1)$, $(H_2)$, and $(H_3) $  of \cite{NR}.     Thus it follows that    for any  admissible control $u$,
the controlled stochastic differential equation (\ref{5.3.2}) has a unique solution (see also  \cite{HN1}),
denoted by $x_t^u$.
   Moreover,  for any $1-H<\al<1/2$,    the solution is ${1-\alpha}$  H\"older continuous almost surely, namely,
   $|x(r)-x(\tau)|\le c_0|r-\tau|^{1-\alpha}$ almost surely
(where $c_0$ may depends on $B_\cdot^H$).
The solution $x_t^u$ of above equation \eref{5.3.2}  depends on $u$.  But
to simplify the notation we often omit its explicit dependence on $u$ and write
$x_t=x_t^u$.

Let $u^*$ and $v$ be  two admissible controls.  Denote $\bar u=v-u^*$.    For any $\e\in \RR$, we denote
$u^\e=u^*+\e \bar u$.   Then  $\bar u$ and $u^\e$ are also admissible controls.  Corresponding to
$u^*$ and $u^\e$  there are   solutions
  $x^\e(\cdot)=x(\cdot; u^\e(\cdot))$ and $x^*(\cdot)=x(\cdot;
u^*(\cdot))$ to the  equations (\ref{5.3.2}) .  That is
\begin{eqnarray}
x^*(t)
&=&x_0+\int_0^tb(s,x^*(s),u^*(s))ds+\sum_{j=1}^m \int_0^t\sigma_j(s,x^*(s),u^*(s))dB_j^H (s),\nonumber\\
\label{e.4.4}\\
x^\e(t)&=&x_0+\int_0^tb(s,x^\e(s),u^\e(s))ds+\sum_{j=1}^m
\int_0^t\sigma_j(s,x^\e(s),u^\e(s))dB_j^H(s)\,. \nonumber\\
\label{e.4.5}
\end{eqnarray}
To obtain our results of maximum  principle, we need the following.
\begin{proposition}\label{l.3.1}   Assume (H4) and (H5).
Let $x^\e(\cdot)$ and $x^*(\cdot)$ be the solutions of equation  (\ref{5.3.2}) corresponding to $u^\e(\cdot)$ and $u^*(\cdot)$ respectively. Then
\begin{eqnarray*}
\lim_{\vare\rightarrow 0} \sup\limits_{0\le t\le T} |x^\e(t)-x^*(t)| &=& 0\,, \\
\lim_{\vare\rightarrow 0}  \sup\limits_{0\le t\le T} \left
| \frac{x^\e(t)-x^*(t)}{\e}-y(t) \right|&=&0
\end{eqnarray*}
almost surely as $\e \to 0$, where $y(t)$ is the solution of the following
equation.
\begin{eqnarray} \label{5.3.6}
y(t)&=&\int_0^t\left[ b_x^*(s)y(s)+b_u^*(s)\bar
u(s)\right]ds\nonumber\\
&&\qquad +\int_0^t\sum\limits_{j=1}^m\left[\sigma_x^{j*}(s)y(s)+\sigma_u^{j*}(s)\bar
u(s)\right]  d^\circ B_j^H(s)\,.
\end{eqnarray}
\end{proposition}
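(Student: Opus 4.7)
The plan is to prove both assertions by pathwise estimates, using the bound of Proposition \ref{p2} to control each Stratonovich integral driven by $B^H$ in an appropriate H\"older norm. I fix $\alpha\in (1-H,1/2)$ and a pathwise H\"older exponent $\beta\in (1-\alpha,H)$. By (H4)--(H5), existence theory from \cite{NR},\cite{HN1}, and the definition \eref{e.4.1} of $U[0,T]$, all admissible controls and the solutions $x^*,x^\e$ are $(1-\alpha)$-H\"older continuous on $[0,T]$ with a random norm bounded by $C_\omega=C\exp(c\sum_j\|B^H_j\|_\beta)$, which is a.s.\ finite and independent of $\e$ for $|\e|$ small.

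For the first limit, I subtract \eref{e.4.4} from \eref{e.4.5}:
\begin{equation*}
x^\e(t)-x^*(t)=\int_0^t\!\bigl[b(s,x^\e,u^\e)-b(s,x^*,u^*)\bigr]ds+\sum_{j=1}^m\int_0^t\!\bigl[\sigma^j(s,x^\e,u^\e)-\sigma^j(s,x^*,u^*)\bigr]d^\circ B_j^H(s).
\end{equation*}
Using (H4)--(H5) and the mean value theorem, each increment $\sigma^j(s,x^\e,u^\e)-\sigma^j(s,x^*,u^*)$ is $(1-\alpha)$-H\"older in $s$ with norm controlled by $C_\omega(\|x^\e-x^*\|_{0,t,1-\alpha}+|\e|\|\bar u\|_\mu)$. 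Applying Proposition \ref{p2} with fractional index $\alpha$ and H\"older index $\beta$ on each $[0,t]$, the bound \eref{e.2.5} produces a kernel $(t-r)^{\alpha+\beta-1}$ and yields
\begin{equation*}
\|x^\e-x^*\|_{0,t,1-\alpha}\le C_\omega|\e|+C_\omega\int_0^t\|x^\e-x^*\|_{0,s,1-\alpha}(t-s)^{\alpha+\beta-1}ds,
\end{equation*}
which closes by a generalised Gronwall inequality of the type used in \cite{NR}. Letting $\e\to0$ gives the first conclusion.

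For differentiability, set $y^\e(t):=(x^\e(t)-x^*(t))/\e$. A Taylor expansion of $b$ and $\sigma^j$ around $(x^*,u^*)$ gives
\begin{equation*}
y^\e(t)=\int_0^t\bigl[b_x^*(s)y^\e(s)+b_u^*(s)\bar u(s)\bigr]ds+\sum_{j=1}^m\int_0^t\bigl[\sigma_x^{j,*}(s)y^\e(s)+\sigma_u^{j,*}(s)\bar u(s)\bigr]d^\circ B_j^H(s)+R^\e(t),
\end{equation*}
where $R^\e$ collects the second-order remainders involving $\sigma_{xx},\sigma_{xu},\sigma_{uu}$ evaluated at intermediate points $x^*+\theta(x^\e-x^*),\ u^*+\theta\e\bar u$; this is where the $C^2$ hypothesis and the Lipschitz conditions of (H5)(iii) enter. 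Combining the Lipschitz continuity of these second derivatives with the uniform convergence $x^\e\to x^*$ from the first step, and once more invoking Proposition \ref{p2} to handle the pathwise integral in $R^\e$, I would show $\|R^\e\|_{0,T,1-\alpha}=o(1)$. Subtracting \eref{5.3.6} from the displayed identity for $y^\e$ and reapplying Proposition \ref{p2} yields
\begin{equation*}
\|y^\e-y\|_{0,t,1-\alpha}\le o(1)+C_\omega\int_0^t\|y^\e-y\|_{0,s,1-\alpha}(t-s)^{\alpha+\beta-1}ds,
\end{equation*}
which closes by the same Gronwall-type argument to give the second conclusion.

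The main obstacle is arranging the Gronwall closure in the H\"older norm rather than the sup norm: the bound from Proposition \ref{p2} requires one to control \emph{both} the pointwise value and the H\"older increment of the integrand on each subinterval, and the exponent $\alpha+\beta-1$ in the kernel is strictly between $0$ and $1$, so one cannot use the classical Gronwall lemma directly. The H\"older-in-time regularity of $\sigma$ and its derivatives demanded in (H5)(ii), the Lipschitz estimate (H5)(iii) on the second derivatives, and the exponential bound on $\|x^*\|_{1-\alpha},\|x^\e\|_{1-\alpha}$ via $\|B^H\|_\beta$ are precisely calibrated to make all constants in these estimates uniform in $\e$ and allow iteration over a finite partition of $[0,T]$.
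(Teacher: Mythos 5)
Your proposal follows essentially the same route as the paper: a pathwise reduction to a deterministic equation driven by H\"older functions, a priori bounds of the form $C\exp(c\|B^H\|_\beta^{1/\beta})$ on $\|x^\e\|_\infty$ and $\|x^\e\|_\beta$ uniform in $\e$, the fractional integration-by-parts estimate \eref{e.2.5} applied to the difference and to the Taylor remainder involving $\sigma_{xx},\sigma_{xu},\sigma_{uu}$, and closure by absorbing the H\"older seminorm on short subintervals and iterating over a finite partition of $[0,T]$ (the paper's Steps 2--3 and 6), which is the same mechanism you describe in your final paragraph rather than a literal fractional Gronwall lemma. The sketch is correct and matches the paper's argument in all essential respects.
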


\medskip\noindent{\it Proof of Proposition \ref{l.3.1}}  \quad
 We shall follow the idea of \cite{HN1} to prove the above lemma.
Fix  $\frac12>\al>0$ and  $\be>0$ such that  $0<1-\alpha<\beta<H$.
Let $g_1(t)\,, \cdots\,, g_m(t)$ be any given  functions of $\beta$-
H\"older continuous. Consider the following   deterministic
differential equation
\begin{equation}\label{5.3.7}
x(t)=x_0+\int_0^tb(s,x(s),u(s))ds+\sum_{j=1}^m \int_0^t\sigma_j(s,x(s),u(s))dg_j(s)\,.
\end{equation}
Corresponding to  the admissible controls $u^*$ and $u^\e$,   the above equation \eref{5.3.7} has also
two solutions, still denoted by $x^*$ and $x^\e$.
Corresponding to $u^\e$, we shall obtain  an equations
depending  on a parameter $\e$.   We can consider
more general one:
\begin{equation}\label{e.4.8a}
x^\e(t)=x_0+\int_0^tb(\e, s,x^\e(s))ds+\sum_{j=1}^m
\int_0^t\sigma_j(\e, s, x^\e(s) )dg_j (s)\,.
\end{equation}

\begin{lemma}
\label{l.4.2} Let  \ $b(\e, t, \cdot):   \RR^n\rightarrow \RR^n$
and $\si_j(\e, t, \cdot):    \RR^n\rightarrow \RR^n$, $j=1, \cdots,
m$,
 be a continuously differentiable functions  with uniformly bounded
 derivatives.  Let  $\si_j(\e, \cdot, x):
[0, T]\rightarrow \RR^n$, $j=1, \cdots, m$,  be uniformly H\"older
continuous with exponent $\ga >\al $.  That means  that there is a
constant $M\in (0, \infty)$, independent of $\e, t$, and $x$,  such
that
\[
\left| \frac{\partial}{\partial x_i} b_j(\e, t, x)\right| \le M \,,
\quad \left| \frac{\partial}{\partial x_i} \si_{j} (\e, t, x)
\right|\le M
\]
and
\[
\left|   \si_{j} (\e, t, x)-\si_{j} (\e, s, x)   \right|\le
M|t-s|^{\ga }\,.
\]
Assume also that $b$ and $\si_j$ satisfy the following   uniform linear growth condition:
\[
\left|  b_j(\e, t, x)\right| \le M (1+|x|) \,,
\quad \left| \  \si_{j} (\e, t, x)
\right|\le   M (1+|x|)\,.
\]
Then there are constants $C$ and $c$ independent of  $\e $, $M$, and $g$, such that for
all $T$,
\begin{equation}
\sup_{0\leq t\leq T}|x^\vare ({t}) | \
\leq Ce^{c\Vert g\Vert _{0,T,{\beta }}^{ \frac{1}{\be}}} (|x_{0}|+1)
  \label{e.4.9a}
\end{equation}
and
\begin{equation}
\|x^\vare \|_{0, T, \beta}
\leq Ce^{c\Vert g\Vert _{0,T,{\beta }}^{ \frac{1}{\be}}} (|x_{0}|+1)
\,.  \label{e.4.9aa}
\end{equation}
\end{lemma}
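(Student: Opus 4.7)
The plan is to adapt the classical Nualart--Rascanu \cite{NR} a priori estimate for Young-type differential equations, combining Proposition \ref{p2} with a subdivision argument whose mesh is tuned to $\|g\|_\beta$. First I would fix an exponent $\alpha \in (1-\beta, \min(\beta, \gamma))$; this range is non-empty since $\beta > 1/2$ (as $H > 1/2$ and we can take $\beta$ close to $H$) and $\gamma > 1 - H > 1 - \beta$ for $\beta$ close enough to $H$. With $\mu = \beta$, I then apply Proposition \ref{p2} to $f(r) = \sigma_j(\vare, r, x^\vare(r))$ on an arbitrary subinterval $[s, t] \subseteq [0, T]$. Using the linear growth $|\sigma_j(\vare, r, x)| \le M(1 + |x|)$ in the first integral, and splitting $|f(r) - f(\tau)| \le M|r-\tau|^\gamma + M\|x^\vare\|_{s,t,\beta}|r-\tau|^\beta$ in the double integral, direct evaluation of the resulting Beta-type integrals (all convergent thanks to $\alpha < \gamma$ and $\alpha < \beta$) yields, for $t - s \le 1$,
\[ \left| \int_s^t \sigma_j(\vare, r, x^\vare(r))\, dg_j(r) \right| \le C\|g\|_\beta \left[(t-s)^\beta (1 + \|x^\vare\|_{s,t,\infty}) + (t-s)^{2\beta} \|x^\vare\|_{s,t,\beta}\right]. \]
Combined with $\left| \int_s^t b(\vare, r, x^\vare(r))\, dr \right| \le CM(t-s)(1 + \|x^\vare\|_{s,t,\infty})$, summing over $j$, dividing by $(t-s)^\beta$ and taking supremum gives
\[ \|x^\vare\|_{s, t, \beta} \le C(1 + \|g\|_\beta)\left[1 + \|x^\vare\|_{s, t, \infty}\right] + C\|g\|_\beta (t-s)^\beta \|x^\vare\|_{s, t, \beta}. \]

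Next I would localize. Partitioning $[0, T]$ into $N$ subintervals $[a_k, a_{k+1}]$ of common length $\Delta$ so small that $C\|g\|_\beta \Delta^\beta \le 1/2$ lets me absorb the last term, yielding $\|x^\vare\|_{a_k, a_{k+1}, \beta} \le C(1 + \|g\|_\beta)(1 + \|x^\vare\|_{a_k, a_{k+1}, \infty})$. The elementary bound $\|x^\vare\|_{a_k, a_{k+1}, \infty} \le |x^\vare(a_k)| + \Delta^\beta \|x^\vare\|_{a_k, a_{k+1}, \beta}$, after further shrinking $\Delta$ so that $C(1 + \|g\|_\beta)\Delta^\beta \le 1/2$, then gives $\|x^\vare\|_{a_k, a_{k+1}, \infty} \le 2|x^\vare(a_k)| + C$. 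Choosing $\Delta$ of order $(1 + \|g\|_\beta)^{-1/\beta}$ makes $N$ of order $1 + \|g\|_\beta^{1/\beta}$, and iterating the endpoint recursion $|x^\vare(a_{k+1})| \le 2|x^\vare(a_k)| + C$ across these $N$ intervals gives $\sup_{0 \le t \le T} |x^\vare(t)| \le 2^N (|x_0| + C) \le C e^{c\|g\|_\beta^{1/\beta}}(|x_0| + 1)$, which is \eref{e.4.9a}. Plugging this uniform bound back into the local $\beta$-Hölder estimate and concatenating across subintervals (controlling $|x^\vare(t) - x^\vare(s)|$ for $s, t$ in distinct subintervals by triangulating through the intermediate endpoints and bounding each piece by its local Hölder seminorm) yields \eref{e.4.9aa}. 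Uniformity in $\vare$ is automatic since all constants in the hypotheses are independent of $\vare$.

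The main technical obstacle is extracting the factor $(t-s)^{2\beta}$, rather than merely $(t-s)^\beta$, in front of the Hölder seminorm in the integral estimate: this extra power of $(t-s)^\beta$ is what makes the self-improving absorption possible and, when paired with the mesh of order $\|g\|_\beta^{-1/\beta}$, produces the critical exponent $1/\beta$ in the exponential bound. Achieving it requires using the spatial Lipschitz dependence and the temporal $\gamma$-Hölder regularity of $\sigma_j$ simultaneously inside the double integral of Proposition \ref{p2}, and rests on the integrability conditions $\alpha < \beta$ and $\alpha < \gamma$ that dictate the admissible range of the fractional derivative parameter.
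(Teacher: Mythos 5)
Your proposal is correct and follows essentially the same route as the paper's proof: the fractional integration-by-parts bound of Proposition \ref{p2} applied to $\sigma_j(\vare,\cdot,x^\vare(\cdot))$, the splitting of the double-integral increment into a temporal $\gamma$-H\"older part and a spatial Lipschitz part to extract the extra factor $(t-s)^{\beta}$ in front of $\|x^\vare\|_{s,t,\beta}$, absorption on subintervals of mesh $\Delta\sim(1+\|g\|_\beta)^{-1/\beta}$, the doubling recursion $|x^\vare(a_{k+1})|\le 2|x^\vare(a_k)|+C$ over $N\sim T\|g\|_\beta^{1/\beta}$ intervals, and the final concatenation of local H\"older seminorms. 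No gaps of substance.
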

\begin{remark}
We also have that $\EE \sup_{0\le t\le T}|x^\e(t)|^p$ and $\EE \|x^\e\|_{0,T,\be}^p$ for any $p>0$ are uniformly bounded (independent of $\e$), when $g=B^H$ and $\frac12<\be<H$, by Fernique theorem.  Actually, since $\frac1\be<2,$ then by Fernique theorem, we have $\EE e^{p\|B^H\|^{\frac1\be}_{0,T,\be} }<\infty$ for all $p>0.$
\end{remark}
\begin{proof}  The existence of the solution $x^\vare$ for every $\vare$ is known.  See example in
\cite{HN1} and \cite{NR}.  We shall sketch the proof of the bounds \eref{e.4.9a}
following  idea in the proof of Theorem 3.1  of
\cite{HN1}.  Without loss of generality we assume that $n=m=1$.
Set $\ $ $\Vert g\Vert _{{\beta }}=\Vert g\Vert _{0,T,{\beta }}$. We
can assume that $\Vert g\Vert _{\beta }>0$, otherwise the
inequalities are obvious.

\smallskip
\noindent {\bf Step 1.}  From \eref{e.2.5}, we have for any $0<s<t<T$
\begin{eqnarray}
&& \left|\int_{s}^{t}\si(\e, r, x^\vare(r))dg_{r}\right|
 \leq   C \|g\|_{\be}
\left[\int_s^t \frac{\left| \si(\e, r, x^\vare(r))\right|  }{(r-s)^\al}
(t-r)^{\al+\be-1} dr\right. \nonumber\\
&&\quad\qquad  \left. + \int_s^t \int_s^r \frac{|\si(\e, r, x^\vare(r))-\si(\e, \tau, x^\vare(\tau))|}{
|r-\tau|^{\al+1}} (t-r)^{\al+\be-1} d\tau dr\right]\nonumber\\
&=:&I_1+I_2\,,\label{e.4.10a}
\end{eqnarray}
where and in what follows, $C$ is a universal constant (independent $g$ and $M$).
It is easy to see from the assumption of the lemma that
\begin{equation}
I_1\le CM \|g\|_\be \left[ 1+\|x^\vare \|_{ s, t, \infty}\right]  (t-s)^{\be}\,.
\label{e.4.11a}
\end{equation}
$I_2$ can be estimated as
\begin{eqnarray}
I_2
&\leq & C \|g\|_{\be}
\left[    \int_s^t \int_s^r \frac{ |\si(\e, r, x^\vare(r))-\si(\e, \tau, x^\vare (r))|}{
|r-\tau|^{\al+1}} (t-r)^{\al+\be-1} d\tau dr\right.\nonumber \\
&&\quad\left.
  \int_s^t \int_s^r \frac{|\si(\e, \tau, x^\vare(r))-\si(\e, \tau, x^\vare(\tau))|}{
|r-\tau|^{\al+1}} (t-r)^{\al+\be-1} d\tau dr\right] \nonumber\\
&\le& CM  \|g\|_{\be} \left[ (t-s)^\ga +\|x^\vare\|_{s, t, \be} (t-s)^\be \right] (t-s)^{ \be }\,.
\label{e.4.12a}
\end{eqnarray}
On the other hand
\begin{equation}
\left| \int_s^t b(\e, r, x^\vare(r) ) dr\right|\le CM \left(1+\|x^\vare\|_{s, t, \infty} \right)
(t-s)\,.\label{e.4.13a}
\end{equation}
Therefore from \eref{e.4.10a}-\eref{e.4.13a} we see that  the solution $x^\vare$ to \eref{e.4.8a} satisfies
\begin{equation}
\Vert x^\vare\Vert _{s,t,{\beta }}\leq C  M  \left[
1+ \left\Vert x^\vare \right\Vert _{s,t,\infty
}\right] + C  M \Vert g\Vert _{{\beta }}\left[
1+ \left\Vert x^\vare \right\Vert _{s,t,\infty
}+ \Vert x^\vare \Vert _{s,t,{\beta }%
}(t-s)^{{\beta }}\right], \,\text{for }s,t\in[0,T]\,.\label{e.4.14a}
\end{equation}

\smallskip
\noindent {\bf Step 2.}
Choose $\Delta $ such that%
\begin{equation}
\Delta = \left( \frac{1}{3C M [1+\Vert g\Vert _{{\beta }}]}\right) ^{\frac{1}{\beta }}. \label{g6}
\end{equation}
Then, for all $s$ and $t$ such that $0\le t-s\leq \Delta $ we have
\begin{equation}
\Vert x^\vare \Vert _{s,t,{\beta }}\leq \frac{3}{2}C M [1+\Vert g\Vert _{{\beta }}] \left(
1+ \left\Vert x^\vare \right\Vert _{s,t,\infty
}\right) .\label{e0}
\end{equation}
Therefore, when $0\le t-s\leq \Delta $
\begin{equation}
|x^\vare({t})| \leq |x^\vare ({s})|+ \frac{3}{2}C M [1+\Vert g\Vert _{{\beta }}] \left(
1+ \left\Vert x^\vare \right\Vert _{s,t,\infty
}\right) \Delta ^{\beta },  \label{e1}
\end{equation}%
or
\[
\|x^\vare\|_{s,t, \infty} \leq |x^\vare ({s})|+ \frac{3}{2}C M [1+\Vert g\Vert _{{\beta }}] \left(
1+ \left\Vert x^\vare \right\Vert _{s,t,\infty
}\right) \Delta ^{\beta }
\]%
for $0\le t-s\leq \Delta $.
Using again (\ref{g6}) we get
\[
\left\Vert x^\vare \right\Vert _{s,t,\infty }\leq 2|x^\vare({s})|+3C M[1+\Vert g\Vert _{{\beta }
}]  {\Delta }^{\beta }.
\]
Since \eref{g6} implies
\[
\Delta \leq \left( \frac{2}{3C M [1+\Vert g\Vert _{{%
\beta }}] }\right) ^{\frac{1}{\beta }}.
\]%
Then%
\[
\left\Vert x^\vare \right\Vert _{s,t,\infty }\leq 2\left( |x^\vare ({s})|+1\right) .
\]
Hence,%
\begin{equation} \label{a1}
\sup_{0\leq r\leq t}|x^\vare({r})|\leq 2\left( \sup_{0\leq r\leq s}|x^\vare({r}) |+1\right)
\quad \forall\    t-s\le \Delta\,, 0\le s\le t\le T \,.
\end{equation}
Now we divide   the interval $[0,T]$ into $n=[T/\Delta ]+1$ subintervals,
and use the estimate (\ref{a1}) in every interval to obtain%
\[
\sup_{0\leq t\leq T}|x^\vare({t})| \leq 2^{n}\left( |x^\vare (0)|+1\right)
 \leq 2^{\frac{T}{\Delta}+1}\left( |x^\vare (0)|+1\right) \ .
\]
Finally, we have  from \eref{g6}
\[
\sup_{0\leq t\leq T}|x^\vare ({t})| \
\leq Ce^{c \Vert g\Vert _{0,T,{\beta }}^{ \frac{1}{\be}}} (|x^\vare (0) |+1)\,.
\]

\smallskip
\noindent {\bf Step 3.}  From Equation \eref{e0}, we see also when $t-s\le \De$ and $0\le s<t\le T$
\begin{equation}
\|x^\vare \|_{s, t, \be}  \
\leq Ce^{c  T M^{ \frac{1}{\be}}\Vert g\Vert _{0,T,{\beta }}^{ \frac{1}{\be}}} (|x^\vare (0)|+1)
\label{e3}
\end{equation}
since $x\le Ce^{cx^{1/\be}}\,,\forall \  x\ge 0$ with  some constants $c$ and $C$.
For general $0\le s<t\le T$, we denote $s=t_0<t_1< \cdots<t_{n-1}<t_n=t$ so that $t_{k}-t_{k-1}\le \De$.
Then
\begin{eqnarray*}
\left|\frac{x^\vare(t)-x^\vare(s)}{(t-s)^\beta}\right|
&\le&\sum_{k=0}^{n-1} \left|\frac{x^\vare(t_{k+1})-x^\vare(t_k)}{(t_{k+1}-t_k)^\be}\right|\\
&\le& \sum_{k=0}^{n-1} Ce^{c  \Vert g\Vert _{0,T,{\beta }}^{ \frac{1}{\be}}} (|x^\vare (0)|+1)\\
&\le& n Ce^{c  \Vert g\Vert _{0,T,{\beta }}^{ \frac{1}{\be}}} (|x^\vare (0) |+1)\,.
\end{eqnarray*}
With the same argument as \eref{e3}, we have \eref{e.4.9aa}.
The proof of the theorem is now complete.
\end{proof}

It is clear that the Proposition \ref{l.3.1}  is a consequence of
the following lemma.

\begin{lemma}\label{l.4.3}  Assume  (H4) and (H5)  and assume $0<1-\mu<1-\alpha<\beta<H$.   Let
 $g_1(t), \cdots, g_m(t)$ be  $\beta$-H\"older continuous  functions  of  $t\in [0,T]$.  Let $x^\e(\cdot)$ and $x^*(\cdot)$ be the solutions of equations (\ref{5.3.7}) corresponding to $u^\e(\cdot)$ and $u^*(\cdot)$,  respectively. Then
\begin{eqnarray*}
&\lim_{\e\rightarrow 0} \sup\limits_{0\le t\le T}|x^\e(t)-x^*(t)|= 0\,,\\
& \lim_{\e\to0}\|x^\e-x^*\|_{1-\alpha} =  0\,,\\
&\lim_{\e\rightarrow 0} \ \sup\limits_{0\le t\le T}\left
| \frac{x^\e(t)-x^*(t)}{\e} -y(t) \right|= 0\,,
\end{eqnarray*}
 where $y(t)$ is the solution of the following linear equation.
\begin{equation}\label{e-y}
y(t)=\int_0^t[b_x^*(s)y(s)+b_u^*(s)\bar
u(s)]ds+\int_0^t\sum\limits_{j=1}^m[\sigma_x^{j,*}(s)y(s)+\sigma_u^{j,*}(s)\bar
u(s)]dg_j(s).
\end{equation}
Moreover, the above limits hold in $L^p$ sense as well for all $p>0.$
\end{lemma}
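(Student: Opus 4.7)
The plan is to mimic the argument of Lemma \ref{l.4.2}, but applied now to the \emph{linear} Young equations satisfied by $z^\e(t):=x^\e(t)-x^*(t)$ and $w^\e(t):=z^\e(t)/\e-y(t)$, rather than to $x^\e$ directly. In both cases the driver remains $g=(g_1,\dots,g_m)$, $\be$-H\"older with $\be>1-\mu$, so the Young--L\"oeve estimate of Proposition \ref{p2} is available, and assumptions (H4)--(H5) together with the a priori bounds of Lemma \ref{l.4.2} on $x^*$ and $x^\e$ (which are uniform in $\e$) will provide uniformly bounded, uniformly $\ga'$-H\"older-in-$s$ coefficients.

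For the first two claims I would write, via the fundamental theorem of calculus,
\[
b(s,x^\e,u^\e)-b(s,x^*,u^*)=A^\e(s)z^\e(s)+\e B^\e(s)\bar u(s),
\]
with $A^\e(s)=\int_0^1 b_x(s,x^*+\la z^\e,u^*+\la\e\bar u)d\la$, $B^\e(s)=\int_0^1 b_u(\cdots)d\la$, and analogously $\si_j(s,x^\e,u^\e)-\si_j(s,x^*,u^*)=C_j^\e(s)z^\e(s)+\e D_j^\e(s)\bar u(s)$. Hypotheses (H4) and (H5)(i)--(ii), combined with the uniform $\be$-H\"older bounds on $x^*,x^\e$ from Lemma \ref{l.4.2} and the admissibility of $u^*,\bar u$, imply that $A^\e,B^\e,C_j^\e,D_j^\e$ are uniformly bounded in $\e$ and that $C_j^\e,D_j^\e$ are uniformly $\ga'$-H\"older in $s$ for some $\ga'>\al$. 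Hence $z^\e$ solves a linear Young equation with zero initial datum and a forcing term of size $\e$. Running the three-step pattern of Lemma \ref{l.4.2} (local Young estimate via Proposition \ref{p2}, iteration on subintervals of length $\Delta$ chosen as in \eref{g6}, concatenation) with the forcing $\e B^\e\bar u\,ds+\e D_j^\e\bar u\,dg_j$ carried along, yields
\[
\|z^\e\|_{0,T,\be}+\|z^\e\|_{0,T,\infty}\le K\,\e,
\]
for a constant $K$ of the form $C\,e^{c\|g\|_\be^{1/\be}}(1+\|\bar u\|_\mu)$. This gives the first two stated convergences.

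For the differentiability claim I would perform a second-order Taylor expansion at $(x^*,u^*)$: using (H5)(iii),
\[
b(s,x^\e,u^\e)-b(s,x^*,u^*)=b_x^*(s)z^\e(s)+\e b_u^*(s)\bar u(s)+R^\e_b(s),
\]
with $|R^\e_b(s)|\le C(|z^\e(s)|^2+\e^2|\bar u(s)|^2+\e|z^\e(s)||\bar u(s)|)$, and analogously $\si_j(s,x^\e,u^\e)-\si_j(s,x^*,u^*)=\si_x^{j,*}(s)z^\e(s)+\e\si_u^{j,*}(s)\bar u(s)+R^\e_{\si,j}(s)$, with $R^\e_{\si,j}$ satisfying the same pointwise bound and uniformly $\ga$-H\"older in $s$. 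Subtracting $\e$ times \eref{e-y} from the equation for $z^\e$, the function $\e w^\e$ solves a linear Young equation of the same template as Step~1, but now with source terms $R^\e_b$ and $R^\e_{\si,j}$. By Step~1 these sources are of size $O(\e^2)$ in both $\|\cdot\|_\infty$ and the Young seminorm driving Proposition \ref{p2}. Rerunning the stepwise bound gives $\|w^\e\|_{0,T,\infty}\le K\e\to 0$, which is the third claim. To upgrade to $L^p$ convergence when $g=B^H$, observe that the constant $K$ above is controlled by $C\,e^{c\|B^H\|_\be^{1/\be}}(1+\|\bar u\|_\mu)$; since $\be<H$ gives $1/\be<2$, Fernique's theorem provides $\EE\,e^{pc\|B^H\|_\be^{1/\be}}<\infty$ for every $p>0$, and the admissibility condition \eref{e.4.1} supplies the analogous exponential moment for $\|\bar u\|_\mu$, so $\e^{-1}\|z^\e\|_\infty$ and $\|w^\e\|_\infty$ are bounded in every $L^p$ uniformly in $\e$, and dominated convergence promotes the a.s.\ limits to $L^p$ limits.

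The main obstacle is Step~1: adapting the nonlinear scheme of Lemma \ref{l.4.2}, which was written for an equation starting from $x_0$, to a linear equation with zero initial datum and a non-trivial $\e$-sized source, in such a way that only a single factor of $\e$ survives the concatenation over $\lceil T/\De\rceil$ subintervals. The key technical point is that the Young regularity of $\bar u$, namely $\bar u\in C^\mu$ with $\mu>1-H>1-\be$, is precisely what is needed for $\int D_j^\e\bar u\,dg_j$ to be well-defined by Proposition \ref{p2}, and for the resulting estimate to depend on $\|\bar u\|_\mu$ only multiplicatively, thereby preserving the overall $\e$-factor.
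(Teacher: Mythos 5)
Your proposal follows essentially the same route as the paper: both write the differences $b^\e-b^*$ and $\si^\e-\si^*$ in integral mean-value form, control the resulting (linear, $\e$-forced) Young equation for $x^\e-x^*$ and then for $\frac{1}{\e}(x^\e-x^*)-y$ via the fractional integration-by-parts estimate of Proposition \ref{p2}, close the self-referential H\"older bound on small subintervals and concatenate exactly as in Lemma \ref{l.4.2}, and invoke Fernique for the $L^p$ upgrade. The only place the paper spends real effort that your sketch compresses is verifying that the Taylor remainders are $O(\e^2)$ not just in sup norm but in the H\"older/Young seminorm (the terms $I_{42}$--$I_{44}$), which is where the Lipschitz and H\"older hypotheses on the second derivatives in (H5)(ii)--(iii) are actually consumed; your plan correctly identifies these hypotheses as the ones needed, so this is a matter of detail rather than a gap.
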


\begin{proof}   We   follow the idea of \cite{HN1}.  We divide the proof into several steps.

\medskip
\noindent\textbf{Step 1}.\quad
To simplify the notation, we assume $n=d=m=1$.  The general case  only increases the notational complexity.    Throughout this paper we shall use $C$ to denote a generic constant, independent of
$\e$,  whose values may be different in different occurrences.
Denote  $\sigma(\cdot, x^\e(\cdot),u^\e(\cdot))$ and $\sigma(\cdot,
x^*(\cdot),u^*(\cdot))$ by $\sigma^\e(\cdot)$ and $\sigma^*(\cdot)$
respectively.  Set   $y^\e(\cdot)=x^\e(\cdot)-x^*(\cdot)$.  We have that
\begin{eqnarray}
 |y^\e(t)-y^\e(s)|
&\le &\left |\int_s^t\left(b^\e(r)-b^*(r)\right)dr \right |+\left |\int_s^t[\sigma^\e(r)-\sigma^*(r)]dg(r) \right | \nonumber\\
&\le&  L \int_s^t|y^\e(r)|dr+\e L \int_s^t|\bar u(r)|dr\nonumber\\
&&\qquad
                +k\|g\|_{\beta}\int_s^t(t-r)^{\alpha+\beta-1}\left[\cD_{s+}^{\alpha}(\sigma^\e(r)-\sigma^*(r)\right]dr\nonumber \\
 &\le &   L \int_s^t|y^\e(r)|dr+\e L \int_s^t|\bar u(r)|dr +I_1+I_2\,, \label{e.4.6}
\end{eqnarray}
where
\begin{eqnarray*}
I_1 &:=& k\|g\|_{\beta}\int_s^t(t-r)^{\alpha+\beta-1}\frac{|\sigma^\e(r)-\sigma^*(r)|}{(r-s)^\alpha}dr\\
\\
I_2&:=& k\|g\|_{\beta}\int_s^t\int_s^r(t-r)^{\alpha+\beta-1}\frac{|\sigma^\e(r)-\sigma^*(r)-[\sigma^\e(\tau)-\sigma^*(\tau)]|}{(r-\tau)^{\alpha+1}}d\tau dr\,.
\end{eqnarray*}
$I_1$  is handled easily.
\begin{eqnarray}
I_1 &\le&
 C \|g\|_\beta\int_s^t\frac{|y^\e(r)|(t-r)^{\alpha+\beta-1}}{(r-s)^\alpha}dr+C
 \e  \|g\|_\beta\int_s^t\frac{|\bar u(r)|(t-r)^{\alpha+\beta-1}}{(r-s)^\alpha}dr\,. \nonumber\\
 &\le&
 C \|g\|_\beta (t-s)^{\be }   \sup_{s\le r\le t} |y^\e(r)| +C
 \e  \|g\|_\beta (t-s)^{\be }      \,,
 \label{e.4.7}
\end{eqnarray}

\medskip
\noindent\textbf{Step 2}.\quad
To estimate  $I_2$,  let us consider the integral in $I_2$, denoted by $\tilde I_2$. .
\begin{eqnarray*}
\tilde I_2 & :=&\int_s^t\int_s^r\frac{(t-r)^{\alpha+\beta-1}}
{(r-\tau)^{\alpha+1}}\left|\sigma^\e(r)-\sigma^*(r)
-[\sigma^\e(\tau)-\sigma^*(\tau)]\right|d\tau dr\\
&=&\int_s^t\int_s^r\frac{(t-r)^{\alpha+\beta-1}}
{(r-\tau)^{\alpha+1}}\left|\int_0^1\sigma_x(r,x^*(r)
+\lambda y^\e(r),u^\e(r))d\lambda\cdot y^\e(r)\right.\\
&&+\int_0^1\sigma_u(r,x^*(r),u^*(r)+\lambda(u^\e(r)-u^*(r)))
d\lambda\cdot (u^\e(r)-u^*(r))\\
&&-\int_0^1\sigma_x(\tau,x^*(\tau)+\lambda y^\e(\tau),u^\e(\tau))
d\lambda\cdot y^\e(\tau)\\
&&-\left.\int_0^1\sigma_u(\tau,x^*(\tau),u^*(\tau)
+\lambda(u^\e(\tau)-u^*(\tau)))d\lambda\cdot (u^\e(\tau)-u^*(\tau))\right| d\tau dr\\
&=&\int_s^t\int_s^r\frac{(t-r)^{\alpha+\beta-1}}
{(r-\tau)^{\alpha+1}}\left|\int_0^1\sigma_x(r,x^*(r)+\lambda y^\e(r),u^\e(r))
d\lambda\cdot [y^\e(r)-y^\e(\tau)]\right.\\
&&+\int_0^1[\sigma_x(r,x^*(r)+\lambda y^\e(r),u^\e(r))-\sigma_x(\tau,x^*(\tau)
+\lambda y^\e(\tau),u^\e(\tau))]d\lambda\cdot y^\e(\tau)\\
&&+\int_0^1\sigma_u(r,x^*(r),u^*(r)+\lambda \e\bar u(r))d\lambda\cdot
\e(\bar u(r)-\bar u (\tau))\\
&&+\left.\int_0^1[\sigma_u(r,x^*(r),u^*(r)+\lambda \e \bar u(r))
-\sigma_u(\tau,x^*(\tau),u^*(\tau)+\lambda \e\bar u(\tau))]
d\lambda\cdot \e\bar u(\tau)\right| d\tau dr\,.
\end{eqnarray*}
Since $\si_x$ and $\si_u$  are bounded,  we have
 \begin{eqnarray*}
\tilde I_2 &\le &
C\int_s^t\int_s^r\frac{(t-r)^{\alpha+\beta-1}}
{(r-\tau)^{\alpha+1}} |y^\e(r)-y^\e(\tau)|d\tau dr \\
& &+  C\e
 \int_s^t\int_s^r\frac{(t-r)^{\alpha+\beta-1}}{(r-\tau)^{\alpha+1}}
|\bar u(r)-\bar u(\tau)|d\tau dr\\
 &&+ \int_s^t\int_s^r\frac{(t-r)^{\alpha+\beta-1}}
 {(r-\tau)^{\alpha+1}}\int_0^1\bigg[ \left|\sigma_x(r,x^*(r)+\lambda
 y^\e(r),u^\e(r))-\sigma_x(\tau,x^*(r)+\lambda y^\e(r),u^\e(r))\right|\\
 &&+\left|\sigma_x(\tau,x^*(r)+\lambda y^\e(r),u^\e(r))
 -\sigma_x(\tau,x^*(\tau)+\lambda
 y^\e(\tau),u^\e(\tau))\right|\bigg]
 d\lambda \cdot |y^\e(\tau)|d\tau dr\\
 &&+\int_s^t\int_s^r\frac{(t-r)^{\alpha+\beta-1}}{(r-\tau)^{\alpha+1}}
 \int_0^1\bigg[ \left|\sigma_u(r,x^*(r),u^*(r)+\lambda \e \bar u(r))
 -\sigma_u(\tau,x^*(r),u^*(r)+\lambda \e \bar u(r))\right| \\
& &+\left|\sigma_u(\tau,x^*(r),u^*(r)+\lambda \e \bar u(r))
-\sigma_u(\tau,x^*(\tau),u^*(\tau)+\lambda \e \bar
u(\tau))\right|\bigg]  d\lambda \cdot \e|\bar u(\tau)|d\tau dr\,.
 \end{eqnarray*}
Again since the second derivatives  $\si_{xx}$, $\si_{uu}$,   and
$\si_{xu}$  are bounded and the first derivatives $\si_x$ and
$\si_u$ are $\gamma$-H\"older continuous in $t$,  we have
 \begin{eqnarray*}
 \tilde I_2
 &\le & C \int_s^t\int_s^r\frac{(t-r)^{\alpha+\beta-1}}
 {(r-\tau)^{\alpha+1}}|y^\e(r)-y^\e(\tau)|d\tau dr\\
 &&\qquad + C \e
 \int_s^t\int_s^r\frac{(t-r)^{\alpha+\beta-1}}{(r-\tau)^{\alpha+1}}
 |\bar u(r)-\bar u(\tau)|d\tau dr\\
 &&+C  \int_s^t\int_s^r\frac{(t-r)^{\alpha+\beta-1}}
 {(r-\tau)^{\alpha+1}}\cdot \left[(r-\tau)^\gamma +|x^*(r)-x^*(\tau)| \right.\\
 &&\left.+|x^\e(r)-x^\e(\tau)| +|u^\e(r)-u^\e(\tau)|  \right]
 \cdot |y^\e(\tau)|d\tau dr\\
 &&+C\e    \int_s^t\int_s^r\frac{(t-r)^{\alpha+\beta-1}}{(r-\tau)^{\alpha+1}}
 \cdot \left[(r-\tau)^\gamma +|x^*(r)-x^*(\tau)| \right.\\
 &&\left.+|u^*(r)-u^*(\tau)| +|u^\e(r)-u^\e(\tau)|  \right]\cdot
 |\bar u(\tau)|d\tau dr\,.
 \end{eqnarray*}
 Since both $u$ and $v$ are admissible controls, they are H\"older
 continuous of order $(\mu$.  Thus
 $u^\e $ is   uniformly  H\"older continuous of order $\mu$.
 From Lemma \ref{l.4.2},  we know that $x^\e$ is also uniformly
 H\"older continuous of order
 $\be$ and hence   $x^\e$ is also uniformly
 H\"older continuous of order
 $1-\al$
 %

  We also use the boundedness of $u$.  Hence,   we have
\begin{eqnarray*}
 \tilde I_2
 &\le &  C
\int_s^t\int_s^r\frac{(t-r)^{\alpha+\beta-1}}{(r-\tau)^{\alpha+1}}
|y^\e(r)-y^\e(\tau)|d\tau dr\\
&&+C
\int_s^t\int_s^r\frac{(t-r)^{\alpha+\beta-1}}{(r-\tau)^{\alpha+1}}
[(r-\tau)^\gamma+ (r-\tau)^{(1-\alpha) }+ (r-\tau)^{\mu }]
|y^\e(\tau)|d\tau dr\\
&&+C\e
\int_s^t\int_s^r\frac{(t-r)^{\alpha+\beta-1}}{(r-\tau)^{\alpha+1}}
[(r-\tau)^\gamma+ (r-\tau)^{(1-\alpha) }+ (r-\tau)^{\mu }]
 d\tau dr\\
 &\le &  C
\int_s^t\int_s^r\frac{(t-r)^{\alpha+\beta-1}}{(r-\tau)^{2\alpha }}
\|y^\vare \|_{1-\al, \tau, r} d\tau dr\\
&&+C
\int_s^t\int_s^r\frac{(t-r)^{\alpha+\beta-1}}{(r-\tau)^{\alpha+1}}
[(r-\tau)^\gamma+ (r-\tau)^{(1-\alpha) }+ (r-\tau)^{\mu }]
|y^\e(\tau)|d\tau dr\\
&&+C\e
\int_s^t\int_s^r\frac{(t-r)^{\alpha+\beta-1}}{(r-\tau)^{\alpha+1}}
[(r-\tau)^\gamma+ (r-\tau)^{(1-\alpha) }+ (r-\tau)^{\mu }]
 d\tau dr\,.
\end{eqnarray*}
Denote $\nu=(1-\al)\wedge \gamma\wedge \mu$.  The above inequality
gives
\begin{eqnarray*}
 \tilde I_2
 &\le &  C (t-s)^{1+\be-\al}
\|y^\vare \|_{1-\al, s, t}  +C (t-s)^{\be+\nu }\|y^\vare \|_{s, t, \infty}+
C\e (t-s)^{\be+\nu }
\end{eqnarray*}
Therefore, we have
\begin{equation}
I_2\le C \|g\|_{\beta}\left[ (t-s)^{1+\be-\al}
\|y^\vare \|_{1-\al, s, t}  +  (t-s)^{\be+\nu }\|y^\vare \|_{s, t, \infty}+
 \e (t-s)^{\be+\nu }\right]\,.\label{e.4.8}
\end{equation}

Combining \eref{e.4.7} and \eref{e.4.8},  we have
\begin{eqnarray}
 |y^\e(t)-y^\e(s)|
&\le&  C \|g\|_{\beta} \bigg[ (t-s)^{1+\be-\al}
\|y^\vare \|_{1-\al, s, t}  +  (t-s)^{\be  }\|y^\vare \|_{s, t, \infty}\nonumber\\
&&\qquad +
 \e (t-s)^{\be  }\bigg]  \,.\label{e.4.9}
\end{eqnarray}
Thus we have
\begin{eqnarray}
\|y^\e\|_{1-\alpha, s, t}
&\le& C  \|g\|_{\beta} \bigg[ \|y^\e\|_{1-\alpha, s, t}(t-s)^\beta + (  t-s)^{\beta+\alpha-1}
\|y^\e\|_{ s,t,\infty }\nonumber\\
&&\qquad   + \e
 (t-s)^{ \beta+\alpha-1} \bigg] \,.\label{e.4.10}
\end{eqnarray}

\medskip
\noindent\textbf{Step 3}.\quad
We choose  $\Delta_1$ such that
$$
C   \|g\|_{\beta}  \Delta_1^\beta=\frac12\,.
$$
From  the above equation \eref{e.4.10}   it follows that  if $0<t-s\le \De_1$, then
\begin{equation*}
\|y^\e\|_{1-\alpha, s, t}\le  C \|g\|_{\beta}   (t-s)^{\al+\be-1}  \|y^\e\|_{s, t, \infty} +C  \e \|g\|_{\beta}  (t-s)^{\al+\be-1}  \,.
\end{equation*}
Since $|y(t)|\le |y(s)|+|t-s| ^{1-\al} \|y\|_{1-\alpha, s, t}$,   the above inequality yields
\begin{equation*}
|y^\e(t)|\le  |y^\e(s)|+ C \|g\|_{\beta}   (t-s)^{ \be }  \|y^\e\|_{s, t, \infty} +C  \e \|g\|_{\beta}  (t-s)^{\ \be } \,,    \quad \forall \  0<t-s\le \De_1\,.
\end{equation*}
which implies easily
\begin{equation}
\|y^\e\|_{s, t, \infty}  \le  |y^\e(s)|+ C \|g\|_{\beta}   (t-s)^{ \be }  \|y^\e\|_{s, t, \infty} +C  \e \|g\|_{\beta}  (t-s)^{\ \be } \,,    \quad \forall \  0<t-s\le \De_1\,. \label{e.4.11}
\end{equation}
Now   we choose $\Delta_2 $ such that
$$
C \|g\|_{\beta}   \Delta_2 ^{ \be }  =1/2\,.
$$
[Notice  that the constant $C$ may be different than that in the definition for $\De_1$.]\  \
Then for all $ 0\le s<t\le T, t-s\le \Delta_0:=\Delta_1\wedge
\Delta_2$,   we have from equation \eref{e.4.11},
\begin{equation}
|y^\e|_{ \infty, s,t}\le  2|y^\e(s)|+  C \e
 \|g\|_{\beta}  (t-s)^\be \,. \label{e.4.12}
\end{equation}
We apply the above inequality to $s=0$ and $t-s\le \De_0$ and notice that $y^\e(0)=0$.  We have that
\begin{equation*}
|y^\e|_{0, \De_0, \infty}\le   C \e
 \|g\|_{\beta}   \De _0^\be \,.
\end{equation*}
In general,  for any integer positive $k$ such that $k\De_0< T$,  if we let $s=  \De_0$ and
$t\in [k\De_0, (k+1) \De_0]$, then we have
\begin{equation*}
|y^\e|_{0, (k+1)\De_0, \infty}\le  2|y_{k\De_0} ^\e| +   C \e
 \|g\|_{\beta}   \De _0^\be \,.
\end{equation*}
This implies
\begin{equation}
|y^\e|_{0, k \De_0, \infty}\le      C (2^{k}-1) \e
 \|g\|_{\beta}   \De _0^\be \,. \label{e.4.13}
\end{equation}
%
In the equation \eref{e.4.13},  if we let
\[
k=[T/\De_0]+1\le 2T/\De_0 \le 2T/\De_1+2T/\De_2=C T \|g\|_\be^{1/\be} \,.
\]
Then \eref{e.4.13} yields
\begin{equation}
|y^\e|_{0, T, \infty}\le      C  2^{CT\|g\|_\be^{1/\be} }  \e
     \,. \label{e.4.14}
\end{equation}
Therefore  we obtain
\begin{equation}\label{e.4.15}
\lim_{\e\rightarrow 0}  \sup\limits_{0\le t\le T}|x^\e(t)-x^*(t)|= 0\,.
\end{equation}
In the same way as in Step 3 in Lemma \ref{l.4.2}, we can also prove
\[\lim_{\e\to0}\|x^\e-x^*\|_{1-\alpha}=0.\]

%

\medskip
\noindent\textbf{Step 4}.\quad
Denote  $\eta^\e(t)=\frac{1}{\e}y^\e(t)-y(t)$.  Then  we can write for all $0\le t\le T$,
\begin{align*}
\eta^\e(t)=&\frac{1}{\e}\int_0^t[b^\e(r)-b^*(r)-\e (b_x^*(r)y(r)+b_u^*(r)\bar u(r))]dr\\
& +\frac{1}{\e}\int_0^t[\sigma^\e(r)-\sigma^*(r)-\e
(\sigma_x^{*}(r)y(r)+\sigma_u^{*}(r)\bar u(r))]dg(r)\,.
\end{align*}
Hence we have
\begin{eqnarray}
 |\eta^\e(t)-\eta^\e(s)|
&\le & I_3+I_4\,,\label{e.4.16}
\end{eqnarray}
where
\begin{eqnarray*}
I_3&=& \left |\frac1{\e}\int_s^t[b^\e(r)-b^*(r)-\e (b_x^*(r)y(r)+b_u^*(r)\bar u(r))]dr \right |\,, \\
I_4 &=& \left |\frac1{\e}\int_s^t[\sigma^\e(r)-\sigma^*(r)-\e (\sigma_x^{*}(r)y(r)+\sigma_u^{*}(r)\bar u(r))]dg(r) \right |\,.
\end{eqnarray*}
 Using the argument as for $I_1$ and from the boundedness and  the Lipschitz continuity of the derivative
  $b_x$ and $b_u$,  and  from
the inequality  \eref{e.4.14},    we have  for $0\le s<t\le T$,
\begin{eqnarray}
I_3
&=&  \bigg |\frac1{\e}\int_s^t \int_0^1 \bigg\{ \left[
 b_x (r, x^*(r)  +\la y^\e (r)\,,  u^\e (r)  )  y^\e (r) - \e b_x^*(r)y(r)\right] \nonumber\\
 &&\qquad   +\e \left[
 b_u (r, x^*(r)  \,,  u^* (r)+\e \la \bar u(r)  )  \bar u  (r) -   b_u^*(r)\bar u(r)\right] \bigg\} d\la dr
     \bigg|\nonumber \\
&\le&   \int_s^t \int_0^1 \bigg\{ \left|
 b_x (r, x^*(r)  +\la y^\e (r)\,,  u^\e (r)  ) \right|\left| \eta^\e (r)     \right| \nonumber \\
 &&\quad +  \left|
 b_x (r, x^*(r)  +\la y^\e (r)\,,  u^\e (r)  )-b_x^*(r)  \right|\left|  y (r)     \right| \nonumber \\
 &&\qquad   +
 \left|
 b_u (r, x^*(r) \,,  u^*(r)  +\e \la \bar u  (r)  )   -   b_u^*(r)\right|\left| \bar u(r)\right| \bigg\} d\la dr
     \bigg|\nonumber \\
&\le &  C  \int_s^t|\eta^\e(r)|dr+ L \int_s^t\left[ \left( |y^\e(r)|+\e |\bar u(r)|
            \right)   \cdot | y(r)|\right] dr+\e L \int_s^t|\bar u(r)|^2dr\nonumber
\end{eqnarray}
Since $\sup_{0\le r\le T} |y^\e|\le C\e$  we obtain
\begin{eqnarray}
I_3
&\le&     C  (t-s) \left[ \|\eta^\e \|_{s, t, \infty}+\e\right]  \,.\label{e.4.17}
\end{eqnarray}

\medskip
\noindent\textbf{Step 5}.\quad
To estimate $I_4$ we shall use   the consequence \eref{e.2.5} of
the  fractional integration by parts formula \eref{e.2.4}.
Denote
$$
\bar \sigma_\e (\cdot)=\sigma^\e(\cdot)-\sigma^*(\cdot)-\e
(\sigma_x^{*}(\cdot)y(\cdot)+\sigma_u^{*}(\cdot)\bar u(\cdot)).
$$
From \eref{e.2.5}    it follows
\begin{eqnarray*}
I_4
&\le &C\|g\|_{\beta}\frac{1}{\e}\int_s^t\frac{|\bar\sigma_\e(r)|  }
{(t-r)^{ 1-\al-\be }(r-s)^\alpha}dr\\
             &&+ C\|g\|_{\beta}\frac{1}{\e}\int_s^t\int_s^r
             \frac{|\bar \sigma_\e(r)-\bar\sigma_\e(\tau)|}{(r-\tau)
             ^{\alpha+1}(t-r)^{1-\alpha-\beta}}d\tau dr\,.
\end{eqnarray*}
Use the same technique as for $I_3$ and $I_2$ to obtain
\begin{eqnarray}
I_4
&\le &C\|g\|_{\beta} \left(I_{41}+I_{42}+I_{43}+I_{44}\right)\,,
\label{e.4.18}
\end{eqnarray}
where
\begin{eqnarray*}
I_{41}&:=&
 \int_s^t\frac{(t-r)^{\alpha+\beta-1}}{(r-s)^\alpha}
\left|\int_0^1\sigma_x(r,x^*(r)+\lambda y^\e(r),u^\e(r)
)d\lambda \cdot \eta^\e(r)\right.\\
&&+\int_0^1\left(\sigma_x(r,x^*(r)+\lambda y^\e(r),u^\e(r))-\sigma_x^*(r)\right)d\lambda \cdot y(r)\\
&&+\left.\int_0^1\left(\sigma_u(r,x^*(r) ,u^*(r)
+\lambda \e\bar u(r))-\sigma^*_u(r)\right)d\lambda \cdot \bar u(r)\right|dr\\
I_{42} &=&  \int_s^t\int_s^r\frac{(t-r)^{  \al+\be -1 }}
 {(r-\tau)^{\alpha+1}}\left|\int_0^1\sigma_x(r,x^*(r)
 +\lambda y^\e(r), u^\e(r)) d\lambda \cdot \eta^\e(r)\right.\\
 &&\left. -\int_0^1\sigma_x(\tau,x^*(\tau)+\lambda y^\e(\tau),u^\e(\tau)
)d\lambda \cdot \eta^\e(\tau)\right|d\tau dr\\
I_{43}&:=&  \int_s^t\int_s^r\frac{(t-r)^{  \al+\be -1 }}
 {(r-\tau)^{\alpha+1}}\left| \int_0^1\left(\sigma_x(r,x^*(r)+\lambda y^\e(r), u^\e(r))-\sigma^*_x(r)\right)d\lambda \cdot y^\e(r)\right. \\
&&\left. -\int_0^1\left(\sigma_x(\tau,x^*(\tau)+\lambda y^\e(\tau),u^\e(\tau)
)-\sigma^*_x(\tau)\right)d\lambda \cdot y^\e(\tau)\right|d\tau dr\\
I_{44}&:=& \int_s^t\int_s^r\frac{(t-r)^{  \al+\be -1 }}
 {(r-\tau)^{\alpha+1}}\left|\int_0^1\left(\sigma_u(r,x^*(r), u^*(r)
 +\lambda \e \bar u(r)  )-\sigma^*_u(r)\right)d\lambda \cdot (\e\bar u(r))\right. \\
 &&\left.-\int_0^1\left(\sigma_u(\tau,x^*(\tau), u^*(\tau)
 +\lambda \e \bar u(\tau))-\sigma^*_u(\tau)\right)d\lambda \cdot(\e \bar u(\tau))\right|d\tau dr\,.
\end{eqnarray*}
We shall  use the boundedness of the first derivatives of $\si$.
Since the second derivatives of $\si$ with respect to $x$ and $u$ are bounded the first
derivatives of $\si$ are Lipschitzian.
Thus we have
\begin{eqnarray*}
  I_{41} &\le &
    \int_s^t\frac{(t-r)^{\alpha+\beta-1}}{(r-s)^{\alpha}}|\eta^\e(r)|dr\\
&&+     \int_s^t\frac{(t-r)^{\alpha+\beta-1}}{(r-s)^{\alpha}}
\left[ |y^\e(r)| +  |\e \bar u(r)| \right] \left[ | y(r)|+
\bar u(r)\right]dr\,.
\end{eqnarray*}
Since $\sup_{0\le t\le T} |y^\vare(t)|\le C\vare$, and $ \bar u(r)$ and $y(r)$ are bounded,
we see
\begin{equation}
I_{41} \le C  (t-s)^\be \left[ \|\eta^\e\|_{s, t, \infty}+  \e\right] \,.\label{e.4.19}
\end{equation}
In a similar way we have
\begin{eqnarray*}
I_{42} &\le &   \int_s^t\int_s^r\frac{(t-r)^{\alpha+\beta-1}}{(r-\tau)^{\alpha+1}}
|\eta^\e(r)-\eta^\e(\tau)|d\tau dr\\
&&+ \|g\|_\beta\int_s^t\int_s^r\frac{(t-r)^{\alpha+\beta-1}}{(r-\tau)^{\alpha+1}}
\left((r-\tau)^\gamma+|x^*(r)-x^*(\tau)| +\right.\\
&&\left.+|x^\e(r)-x^\e(\tau)| +|u^*(r)-u^*(\tau)| +|u^\e(r)-u^\e(\tau)| \right)
\left| \eta^\e(\tau)\right| d\tau dr\,.
\end{eqnarray*}
By the H\"older continuity of $x^*$ and $u^*$ and unform H\"older
continuity of $x^\e$ and $u^\e$, which are used to assure the integrability,   we
obtain (similar to $\tilde I_2$)
\begin{eqnarray}\label{e.4.20}
I_{42} &\le& (t-s)^{\be+1-\al} \|\eta^\e\|_{1-\al, s, t}+(t-s)^{\be+\nu} \|\eta^\e\|_{s, t, \infty}\,.
\end{eqnarray}
$I_{43}$ and $I_{44}$ are more complex and can be dealt with  in the same way. We shall consider
$I_{43}$.  First we have
\begin{eqnarray}
I_{43}&\le & I_{431}+I_{432}\,,\label{e.4.21}
\end{eqnarray}
where
\begin{eqnarray*}
I_{431}&:=& \int_s^t\int_s^r\frac{(t-r)^{  \al+\be -1 }}
 {(r-\tau)^{\alpha+1}} \int_0^1\left|\sigma_x(r,x^*(r)+\lambda y^\e(r), u^\e(r))-\sigma^*_x(r) \right| d\lambda\\
&&\qquad  \cdot \left|y^\e(r)-y^\e(\tau)\right|
d\tau dr \\
I_{432}&:=& \int_s^t\int_s^r\frac{(t-r)^{  \al+\be -1 }}
 {(r-\tau)^{\alpha+1}}  \int_0^1\Big| \sigma_x(r,x^*(r)+\lambda y^\e(r),u^\e(r))-\sigma^*_x(r)  \\
&&\quad
-\sigma_x(\tau,x^*(\tau)+\lambda y^\e(\tau),u^\e(\tau))+\sigma^*_x(\tau))
\Big|  d\lambda \cdot \left|  y^\e(\tau)\right|d\tau dr\,.
\end{eqnarray*}
By the H\"older continuity of $y^\e$ and Lipschitz continuity of the first
derivatives of $\si$, we have
\begin{eqnarray}
I_{431}&\le & \int_s^t\int_s^r\frac{(t-r)^{  \al+\be -1 }}
 {(r-\tau)^{\alpha+1}} (r-\tau)^{1-\al} \left[ |y^\e(r)|+\e |\bar u(r)|\right]
 dr\le C(t-s)^{\be+1-\al} \e\,.\nonumber\\
 \label{e.4.22}
\end{eqnarray}
To deal with $I_{432}$ we denote
\[
x^\e_{\la, \eta}(r)=x^*(r)+\la \eta y^\e(r)\,,\quad
u^\e_{ \eta}(r)=u^*(r)+  \eta \bar u(r)\,.
\]
Then
\begin{eqnarray*}
&& \sigma_x(r,x^*(r)+\lambda y^\e(r),u^*(r)
+ \e \bar u(r)) -\sigma^*_x(r)\\
&=& \int_0^1 \left[  \la \si_{xx}(r, x^\e_{\la, \eta }(r),u^\e_{\eta }(r)) y^\e(r)
 +  \e
\si_{xu}(r, x^\e_{\la, \eta }(r),u^\e_{\eta }(r))\bar u(r)  \right] d\eta\,.
\end{eqnarray*}
Hence,
\begin{eqnarray*}
I_{432}&\le & \int_s^t\int_s^r\frac{(t-r)^{  \al+\be -1 }}
 {(r-\tau)^{\alpha+1}}  \int_0^1 \int_0^1\bigg[ \left|  \si_{xx}
 (r, x^\e_{\la, \eta }(r),u^\e_{\eta }(r)) y^\e(r)
 -\si_{xx}(\tau , x^\e_{\la, \eta }(\tau ),u^\e_{ \eta }(\tau ))y^\e(\tau)  \right|
   \\
&&\quad
+\e \left|
\si_{xu}(r, x^\e_{\la, \eta }(r),u^\e_{ \eta }(r)) \bar u(r) -
\si_{xu}(\tau , x^\e_{\la, \eta }(\tau ),u^\e_{ \eta }(\tau ))  \bar u(\tau)
 \right| \bigg]  d\lambda d\eta \cdot \left|  y^\e(\tau)\right|d\tau dr\\
\end{eqnarray*}
By the boundedness and the H\"older continuity of the second derivatives of $\si$ we
have
\begin{eqnarray*}
I_{432}&\le & C \int_s^t\int_s^r\frac{(t-r)^{  \al+\be -1 }}
 {(r-\tau)^{\alpha+1}}
|y^\e(r)-y^\e(\tau)| \left|  y^\e(\tau)\right|   dr\\
&&\quad+C\e \int_s^t\int_s^r\frac{(t-r)^{  \al+\be -1 }}
 {(r-\tau)^{\alpha+1}}
|\bar u(r)-\bar u (\tau)| \left|  y^\e(\tau)\right|   dr\\
&&\quad
+ C \int_s^t\int_s^r\frac{(t-r)^{  \al+\be -1 }}
 {(r-\tau)^{\alpha+1}}
 \bigg[ (r-\tau)^\gamma+|x^*(r)-x^*(\tau)|
 +|y^\e(r)-y^\e(\tau)| \\
 &&\quad + \e |u^*(r)-u^*(\tau)| +|u^\e(r)-u^\e(\tau)| \bigg]
  \cdot \bigg[   \left|  y^\e
  (\tau)\right|  +\vare
  \left|  \bar u(\tau)\right| \bigg] \left| y^\e(\tau)\right|  d\tau dr\,.
\end{eqnarray*}
By equation \eref{e.4.14}, we have
\begin{equation}
I_{432} \le   C(t-s)^{\be } \e\,.\label{e.4.23}
\end{equation}
Combination of \eref{e.4.21}-\eref{e.4.23} yields
\begin{equation}
I_{43} \le   C(t-s)^{\be } \e\,.\label{e.4.24}
\end{equation}
In similar way, we have
\begin{equation}
I_{44} \le   C(t-s)^{\be } \e\,.\label{e.4.25}
\end{equation}
The inequalities \eref{e.4.18}, \eref{e.4.19}, \eref{e.4.20}, \eref{e.4.24}, and \eref{e.4.25}
yield
\begin{equation}
I_{4} \le   C\|g\|_\be \left[ (t-s)^{\be+1-\al}  \|\eta^\e \|_{1-\al, s,t}
+(t-s)^{\be}  \|\eta^\e \|_{s, t, \infty} +\e (t-s)^{\be }\right] \,.\label{e.4.26}
\end{equation}

\medskip
\noindent\textbf{Step 6}.\quad
From \eref{e.4.16}, \eref{e.4.17} and \eref{e.4.26}, we have
\begin{equation*}
 |\eta^\e(t)-\eta^\e(s)| \le   C\|g\|_\be \left[ (t-s)^{\be+1-\al}  \|\eta^\e \|_{1-\al, s,t}
+(t-s)^{\be}  \|\eta^\e \|_{s, t, \infty} +\e(t-s)^{\be } \right] \,.
\end{equation*}
This implies
\begin{equation*}
 \|\eta^\e \|_{1-\al, s,t}   \le   C\|g\|_\be \left[ (t-s)^{\be }  \|\eta^\e \|_{1-\al, s,t}
+(t-s)^{\be+\al -1}  \|\eta^\e \|_{s, t, \infty} +\e(t-s)^{\be+\al-1 } \right] \,.
\end{equation*}

Now we can follow the same argument as in Step  3  to complete the proof of the theorem.

{\bf{Step 7.}} The $L^p$ convergence is from the Fernique Theorem.
\end{proof}

As we mentioned earlier Lemma \ref{l.4.3} implies Proposition
\ref{l.3.1} easily. This completes the proof for Proposition
\ref{l.3.1}. \rule{0.5em}{0.5em} \par

To derive the maximum principle, we need to obtain an explicit solution to the equation
\eref{5.3.6}.   Let us   consider   the following   linear matrix-valued
stochastic differential equations.
\begin{eqnarray}
\left\{\begin{array}{rcl}
d\Phi(t)&=&b_x^*(t)\Phi(t)dt+\sum\limits_{j=1}^m\sigma_x^{j, *}(t)\Phi(t)d^\circ B^H_j(t),\\
\Phi(0)&=&I,
\end{array}\right. \label{y1}
\end{eqnarray}
From the basic stochastic calculus for fractional  Brownian motions (see e.g.\cite{HZ}), it is clear  that
this equation has a unique solution, denoted by $\Phi(t)$.
It is easy  to verify that   $\Phi^{-1}(t) $  exists and is
the unique solution of the following stochastic differential equations:
\begin{eqnarray}\left\{\begin{array}{rcl}
d\Phi^{-1}(t)&=&- \Phi^{-1} (t) b_x^*(t)
dt-\sum\limits_{j=1}^m\Phi^{-1}
(t)\sigma_x^{j, *}(t)d^\circ B^H_j(t),\\
\Phi^{-1}(0)&=&I\,.
\end{array}\right.\label{y2}
\end{eqnarray}
Again from   the  It\^o's formula  we can obtain the solution of  the equation   \eref{5.3.6}
by using
 $\Phi(t)$ and $ \Phi^{-1}(t)$.
\begin{lemma} Let $\Phi(t)$ and $\Phi^{-1}(t)$ be defined by \eref{y1} and \eref{y2}. Then the
solution to \eref{5.3.6} is given by
\begin{eqnarray}
y(t)&=&\Phi(t)\int_0^t\Phi^{-1}(s) b_u^*(s)\bar u(s)ds\nonumber\\
&&\qquad
+\sum\limits_{j=1}^m\Phi(t)\int_0^t\Phi^{-1}(s)\sigma_u^{j, *}(s)\bar u(s)dB^H_j(s)\,,   \label{y3}
\end{eqnarray}
\end{lemma}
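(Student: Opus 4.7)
The natural strategy is variation of parameters: set $z(t):=\Phi^{-1}(t)y(t)$, derive a simple equation for $z$, then recover $y$ by multiplying by $\Phi(t)$. Because all stochastic integrals in \eref{5.3.6}, \eref{y1}, and \eref{y2} are of pathwise (Stratonovich) type with respect to fractional Brownian motions of Hurst parameter $H>1/2$, and because $\Phi$, $\Phi^{-1}$, $y$, and $B^H$ are all $\beta$-H\"older with $\beta$ strictly greater than $1/2$ (so pairs of their H\"older exponents sum to more than $1$), the ordinary chain/product rule holds for Young--Riemann--Stieltjes integrals. This is the key structural fact that replaces the It\^o correction present in the standard Brownian setting, and it is what makes the classical variation-of-parameters formula go through unchanged.

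First I would verify, by applying the Stratonovich product rule to $\Phi(t)\Phi^{-1}(t)$ and using \eref{y1}--\eref{y2}, that $d(\Phi\Phi^{-1})=0$, which is consistent with $\Phi(0)\Phi^{-1}(0)=I$ and confirms that $\Phi^{-1}$ as defined really is the pointwise inverse of $\Phi$. Next I would compute $dz$ by the product rule
\begin{equation*}
dz(t)=d\Phi^{-1}(t)\cdot y(t)+\Phi^{-1}(t)\cdot dy(t).
\end{equation*}
Substituting \eref{y2} for $d\Phi^{-1}$ and \eref{5.3.6} for $dy$, the two terms involving $b_x^*(t)y(t)$ cancel, and for each $j$ the two terms involving $\sigma_x^{j,*}(t)y(t)$ cancel as well. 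What remains is
\begin{equation*}
dz(t)=\Phi^{-1}(t)b_u^*(t)\bar u(t)\,dt+\sum_{j=1}^m \Phi^{-1}(t)\sigma_u^{j,*}(t)\bar u(t)\,d^\circ B_j^H(t).
\end{equation*}

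Since $y(0)=0$, hence $z(0)=0$, integrating from $0$ to $t$ gives an explicit formula for $z(t)$, and multiplying on the left by $\Phi(t)$ yields exactly \eref{y3}. To finish I would check the converse direction: that the right-hand side of \eref{y3} does satisfy \eref{5.3.6}. This is a symmetric application of the same product rule to $\Phi(t)z(t)$, using $d\Phi$ from \eref{y1} together with the formula for $dz$ just obtained; the coefficients of $dt$ and each $d^\circ B_j^H(t)$ reassemble into those of \eref{5.3.6}.

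The main obstacle is the rigorous justification of the Stratonovich product rule in this fractional setting. One must check that $\Phi^{-1}(t)$ is well-defined and has H\"older regularity $\beta>1/2$ matching that of $B^H$, so that all products and Young integrals under consideration are meaningful and the classical chain rule applies. This is not quite the content of Proposition \ref{p.2.5} (which concerns the product of an $fBM$ integral and a standard Brownian integral), but is the pathwise product rule available from the Young/fractional-calculus framework recalled in Section~2 and developed in \cite{Hu1}, \cite{HN1}, \cite{HZ}; once this rule is invoked, the computation is deterministic and of the same form as in the standard Brownian Stratonovich case.
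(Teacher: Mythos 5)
Your proposal is correct and is exactly the argument the paper intends: the text merely remarks that the solution of \eref{5.3.6} follows ``from the It\^o's formula \ldots by using $\Phi(t), \Phi^{-1}(t)$,'' i.e.\ the same variation-of-parameters computation with the pathwise (Young) product rule that you carry out, justified by the fact that $\Phi$, $\Phi^{-1}$, $y$ and $B^H$ are H\"older of order greater than $1/2$. The only cosmetic remark is that your derivation naturally produces the integral $d^\circ B_j^H$ in \eref{y3}, which is how the formula is in fact used later in Section 5, even though the displayed equation omits the $\circ$.
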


\section{Maximum principle for stochastic control of system
driven by fractional Brownian motion}
\setcounter{equation}{0}


Recall that we defined the space of admissible controls in Section 4,
\begin{eqnarray}
U[0,T]
&:=&\Bigg\{u| u: [0,T]\times \Omega \to \RR^d, u \hbox{ is } \cG\hbox{-adapted, } u\in C^\mu[0, T]\  \hbox{for some}\ \mu>1-H\,,
\nonumber\\
&&\hbox{there exist constant $C>0$,   $c>0$,  and $\be<H$}  \nonumber\\
&&\quad
\hbox{such that} \ \  \|u\|_{\mu, 0, T}\le C
e^{c \sum_{j=1}^m \|B_j^H\|_{\be, 0, T}}\nonumber\\
&&\quad  \hbox{and}\ \ \int_0^T\int_0^T \EE \left(|D_s^H u(t)|^2\right) dsdt<\infty
\Bigg\}\,.  \label{e.5.1}
\end{eqnarray}
It is clear that $U[0, T]$ is a linear space.

Let   $b: [0,T]\times \RR^n\times \RR^d \to \RR^n$,
$\sigma:[0,T]\times \RR^n \times \RR^d   \to \RR^{n\times m}$,
be   some given continuous  functions
satisfying the assumptions (H4) and (H5) given in Section 4.
Consider the following controlled system of stochastic differential equations driven
by fractional Brownian motions:
\begin{eqnarray}\left\{\begin{array}{rcl}
dx(t)&=&b(t,x(t),u(t))dt+\sum\limits_{j=1}^m\sigma^j(t,x(t),u(t))d^\circ B_j^H(t),\\
x(0)&=&x_0\,. \end{array}\right.\label{6.3.2}
\end{eqnarray}
Here the integral with respect to fractional Brownian motion is the Stratonovich integral.
Some properties of this controlled system are given in Section 4.

Let $l: [0,T]\times \RR^n\times \RR^d \ \to \RR $
and $h: \RR^n\to \RR$ be  some given functions
satisfying the following conditions.

\medskip
\noindent{({\bf H3}) $l$ and $h$ are  continuously differentiable with bounded derivatives.
\medskip

The cost functional we use in this paper  is given by
\begin{eqnarray}
J(u(\cdot))=\EE \left\{\int_0^Tl(t,x(t),u(t))dt+h(x(T))\right\}.\label{e.5.3a}
\end{eqnarray}

Assume $\gamma>1-H $. Let  $\alpha\in (1-H,\alpha_0)$ and $\rho\ge 1/\alpha$, where
$$
\alpha_0=\min\left\{\frac12,\gamma \right\}.
$$
From the conditions (H4)-(H5) the controlled stochastic differential equation (\ref{6.3.2}) has a unique solution  (see e.g.  \cite{HN1}, \cite{NR}). Moreover for $P$-almost all $\omega\in \Omega$, $X(\omega,\cdot)\in C^{1-\alpha}(0,T,\cR^d)$. So assume that $|x(r)-x(\tau)|\le c_0|r-\tau|^{1-\alpha}$ in probability.
The solution $x_t^u$ of above equation depends on $u$.  But
to simplify the notation we often omit its explicit dependence on $u$ and write
$x_t=x_t^u$.

Now our optimal control problem can be stated as to minimize the cost
functional over $U[0,T]$.  That is to find  optimal control $u^*(\cdot)\in U[0, T]$
such that
\begin{equation}
J(u^*(\cdot))=\inf\limits_{u(\cdot)\in U[0,T]}J(u(\cdot))\,.\label{e.5.4a}
\end{equation}
Let $u^*(t)$ be an optimal control and
 $x^*(t)$ be the corresponding solution of equation (\ref{6.3.2}).
$(x^*(\cdot),u^*(\cdot))$  is called an  optimal pair.

We will find a necessary condition that  the optimal control $u^*(\cdot)$ must
satisfy, which is also called the maximum principle.

\subsection{Stochastic control with partial information}

Since $u^*(\cdot)$ is an optimal  control and since the space
$U[0, T]$   of admissible  controls is linear, we have  that $u^*(\cdot)$
is a critical point of nonlinear functional $J(  u(\cdot))$, $u\in U[0, T]$.
  This means that
\begin{eqnarray}
  \left.\frac d{d\e}J(u^*(\cdot)+\e \bar u(\cdot))\right |_{\e=0}=0\,.
  \label{critical}
  \end{eqnarray}
We will follow the same idea as in Section  3 for the Brownian motion
case to deduce a necessary condition from the above equation
\eref{critical}. As in Section 4, we denote $\displaystyle
y(r)=\lim_{\e\rightarrow 0} \frac{ x^\e(r)-x^*(r)}{ \e}$.   By Lemma \eref{l.4.3} and Dominated Convergence Theorem, we have
\begin{eqnarray*}
&~&\left.\frac d{d\e}J(u^*(\cdot)+\e \bar u(\cdot))\right |_{\e=0}
=\EE \int_0^T\left({l_x^*}^T (t)y(t)+ {l_u^*}^T(t)\bar u(t)\right)dt
+\EE \left[ h_x^T(x^*(T))y(T)\right]
 \,.
\end{eqnarray*}
Substituting \eref{e-y} into \eref{critical} we obtain
\begin{eqnarray*}
&~&\left.\frac d{d\e}J(u^*(\cdot)+\e \bar u(\cdot))\right |_{\e=0}
= \EE \int_0^T\left({l_x^*}^T (t)\Phi(t)\int_0^t\Phi^{-1}(s){b_u^*} (s)
\bar u(s)ds+{l_u^*}^T (t)\bar u(t)\right)dt\\
&~&+\EE \left\{h_x^T (x^*(T))\Phi(T)\int_0^T\Phi^{-1}(s){b_u^*} (s)\bar u(s)ds\right\}\\
&~&+\sum\limits_{j=1}^m\EE \int_0^T\left({l_x^*}^T(t)\Phi(t)
\int_0^t\Phi^{-1}(s)\sigma_u^{j, *}(s)
\bar u(s)d^\circ B_j^H(s)\right)dt\\
&~&+\sum\limits_{j=1}^m\EE \left\{h_x^T(x^*(T))\Phi(T)
\int_0^T\Phi^{-1}(s)\sigma_u^{j, *}(s)\bar u(s)d^\circ B_j^H(s)\right\}\\
&=&\EE \int_0^T\left(\int_s^T\left({l_x^*}^T(t)\Phi(t)\right)dt
\Phi^{-1}(s)b_u^*(s)\bar u(s)\right)ds+\EE \int_0^T {l_u^*}^T(s)\bar u(s)ds\\
&~&+\EE \int_0^T h^T_x(x^*(T))\Phi(T)\Phi^{-1}(s)b_u^*(s)\bar u(s)ds\\
&~&+\sum\limits_{j=1}^m\int_0^T\left(\EE {l_x^*}^T (t)\Phi(t)
\int_0^t\left(\Phi^{-1}(s)\sigma_u^{j, *}(s)\bar u(s)d^\circ B_j^H(s)\right)ds\right)dt\\
&~&+\sum\limits_{j=1}^m\EE \left\{h_x^T(x^*(T))\Phi(T)
\int_0^T\Phi^{-1}(s)\sigma_u^{j, *}(s)\bar u(s)d^\circ B_j^H(s)\right\}.
\end{eqnarray*}
%
The expectation of the above last two terms can be computed by the formula
\eref{expectation}.  Thus,  we have
\begin{eqnarray*}
&~&\left.\frac d{d\e}J(u^*(\cdot)+\e \bar u(\cdot))\right |_{\e=0}\\
&=&\EE \int_0^T \left( \int_s^T  {l_x^*}^\top(t)\Phi(t)dt\right
)\Phi^{-1}(s)b_u^*(s)\bar u(s)ds+\EE \int_0^T  {l_u^*}^\top(s)\bar u(s)ds\\
&~&+\EE \int_0^T h_x^\top(x^*(T))\Phi(T)\Phi^{-1}(s)b_u^*(s)\bar u(s)ds\\
&~&\qquad +\sum\limits_{j=1}^m\EE \int_0^T \left(\int_s^T  \DD_s^j
\left\{{l_x^*}^\top(t)\Phi(t)\Phi^{-1}(s)\sigma_u^{j,
*}(s)\right\}\bar u(s)dt\right)ds\\ &~&+\sum\limits_{j=1}^m\EE
\int_0^T \int_s^T {l_x^*}^\top(t)\Phi(t)\Phi^{-1}(s)\sigma_u^{j,
*}(s)\DD_s^j  \left(\bar u(s)\right)dtds
\\
&~&\qquad +\sum\limits_{j=1}^m\EE \int_0^T \DD_s^j
 \left(h_x^\top(x^*(T))\Phi(T)\Phi^{-1}(s)\sigma_u^{j,*}(s)\right)\bar
 u(s)ds\\ &~&+\sum\limits_{j=1}^m\EE
\int_0^T h_x^\top(x^*(T))\Phi(T)\Phi^{-1}(s)\sigma_u^{j,
*}(s)\DD_s^j \left(\bar u(s)\right)ds.
\end{eqnarray*}
Since the equation \eref{critical} holds true for all adapted
process $u$ in $U[0, T]$, we can   choose especially $\bar
u(s)=\mathbf{1}_{[a,b]}\tilde u$, where $0\le a\le b\le T$ and $
\tilde{u}$ which is ${\cal G}_a$ measurable. Then from
\eref{critical}  and the above computation it follows
\begin{eqnarray*}
&&\EE \int_a^b  \left( \int_s^T  {l_x^*}^\top(t)\Phi(t)dt\right
)\Phi^{-1}(s)b_u^*(s)\bar u      ds+\EE \int_a^b   {l_u^*}^\top(s)\bar u      ds\\
&~&+\EE \int_a^b  h_x^\top(x^*(T))\Phi(T)\Phi^{-1}(s)b_u^*(s)\bar u      ds\\
&~&\qquad +\sum\limits_{j=1}^m\EE \int_a^b  \left(\int_s^T
\DD_s^j \left\{{l_x^*}^\top(t)\Phi(t)\Phi^{-1}(s)\sigma_u^{j,
*}(s)\right\}\bar u      dt\right)ds\\ &~&+\sum\limits_{j=1}^m\EE
\int_a^b  \int_s^T {l_x^*}^\top(t)\Phi(t)\Phi^{-1}(s)\sigma_u^{j,
*}(s)\DD_s^j  \left(\bar u      \right)dtds
\\
&~&\qquad +\sum\limits_{j=1}^m\EE \int_a^b  \DD_s^j
 \left(h_x^\top(x^*(T))\Phi(T)\Phi^{-1}(s)\sigma_u^{j,*}(s)\right)\bar
 u ds\\ &~&+\sum\limits_{j=1}^m\EE
\int_a^b  h_x^\top(x^*(T))\Phi(T)\Phi^{-1}(s)\sigma_u^{j,
*}(s)\DD_s^j \left(\bar u      \right)ds=0.
\end{eqnarray*}
We can use the formula \eref{ito-expectation} to compute the above
two terms involving $\DD_s^j  \bar u$. we have then
\begin{eqnarray*}
&& \EE \Bigg[ \Bigg\{ \int_a^b  \left( \int_s^T
{l_x^*}^\top(t)\Phi(t)dt\right
)\Phi^{-1}(s)b_u^*(s)      ds+\int_a^b   {l_u^*}^\top(s)       ds \\
&~&+\int_a^b  h_x^\top(x^*(T))\Phi(T)\Phi^{-1}(s)b_u^*(s)      ds\\
&~&\qquad +\sum\limits_{j=1}^m\int_a^b  \left(\int_s^T  \DD_s^j
\left\{{l_x^*}^\top(t)\Phi(t)\Phi^{-1}(s)\sigma_u^{j,
*}(s)\right\}      dt\right)ds\\ &~&+\sum\limits_{j=1}^m
\int_a^b \left[
\int_s^T {l_x^*}^\top(t)\Phi(t)\Phi^{-1}(s)\sigma_u^{j,
*}(s) dt \right]  dB_j^H(s)
\\
&~&\qquad +\sum\limits_{j=1}^m\int_a^b  \DD_s^j
 \left(h_x^\top(x^*(T))\Phi(T)\Phi^{-1}(s)\sigma_u^{j,*}(s)\right)\bar
 u ds\\ &~&+\sum\limits_{j=1}^m
\int_a^b  h_x^\top(x^*(T))\Phi(T)\Phi^{-1}(s)\sigma_u^{j,
*}(s) dB_j^H(s) \Bigg\}\bar u  \Bigg]  =0.
\end{eqnarray*}
Since the $\cG  _a$ measurable $\bar u$ is arbitrary,   we have
\begin{theorem}  Let $u^*$ be the optimal admissible control.  Then $u^*$ satisfies
the following
\begin{eqnarray}
&& \EE \Bigg[ \Bigg\{ \int_a^b  \left[ \int_s^T
{l_x^*}^\top(t)\Phi(t)dt  \Phi^{-1}(s)b_u^*(s)  +  {l_u^*}^\top(s)
   +
  h_x^\top(x^*(T))\Phi(T)\Phi^{-1}(s)b_u^*(s)  \right]     ds\nonumber\\
&~&  +\sum\limits_{j=1}^m\int_a^b  \bigg[ \int_s^T  \DD_s^j
\left\{{l_x^*}^\top(t)\Phi(t)\Phi^{-1}(s)\sigma_u^{j,
*}(s)\right\}      dt \nonumber\\
&&\quad +  \DD_s^j
 \left\{ h_x^\top(x^*(T))\Phi(T)\Phi^{-1}(s)\sigma_u^{j,*}(s)\right\}  \bigg]  ds
+\sum\limits_{j=1}^m \int_a^b \bigg[
\int_s^T {l_x^*}^\top(t)\Phi(t)\Phi^{-1}(s)\sigma_u^{j,
*}(s) dt \nonumber \\
&&\quad  +
  h_x^\top(x^*(T))\Phi(T)\Phi^{-1}(s)\sigma_u^{j,
*}(s)\bigg]  dB_j^H(s) \Bigg\} \bigg|\cG  _a  \Bigg]  =0\,,\quad \forall \ \ 0\le a\le b\le T\,.  \label{partial}
\end{eqnarray}
\end{theorem}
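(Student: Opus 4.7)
The plan is to derive \eref{partial} by combining the criticality condition \eref{critical} with the explicit representation of the variational process $y(t)$ from \eref{y3} and the Malliavin-type duality identities \eref{ito-expectation} and \eref{expectation}. First I would compute the Gateaux derivative $\left.\frac{d}{d\e}J(u^*+\e\bar u)\right|_{\e=0}$ by interchanging $\frac{d}{d\e}$ with the expectation and time integration; this interchange is justified by the $L^p$ convergence in Lemma~\ref{l.4.3} together with the growth bounds on $l_x^*, l_u^*, h_x$ coming from (H3). This yields
\[
\EE \int_0^T\left({l_x^*}^\top(t)y(t)+{l_u^*}^\top(t)\bar u(t)\right)dt+\EE\left[h_x^\top(x^*(T))y(T)\right]=0.
\]

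Next I would substitute the explicit formula \eref{y3} for $y(t)$, splitting it into the drift part (a Lebesgue integral in $\bar u$) and the diffusion part (a Stratonovich integral in $\bar u$ against $B^H$). For the drift terms, a direct application of Fubini gives single integrals in $s$ whose integrands multiply $\bar u(s)$. For the two Stratonovich terms (one coming from the running cost and one from the terminal cost) I would invoke Proposition~2.5 (equivalently the identity \eref{expectation}) to rewrite
\[
\EE\left\{F\int_0^T\psi_t\,d^\circ B_j^H(t)\right\}=\int_0^T\EE\left(\DD_t^j[F\psi_t]\right)dt,
\]
applied with $F={l_x^*}^\top(t)\Phi(t)$ (together with Fubini in $t$ and $s$) and with $F=h_x^\top(x^*(T))\Phi(T)$ respectively, where $\psi_s=\Phi^{-1}(s)\sigma_u^{j,*}(s)\bar u(s)$. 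This produces terms of the form $\DD_s^j\{(\cdots)\bar u(s)\}$, which by the Leibniz rule split into $\DD_s^j(\cdots)\cdot\bar u(s)+(\cdots)\cdot\DD_s^j\bar u(s)$.

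After these manipulations the criticality condition becomes $\EE\int_0^T \Theta(s)\bar u(s)\,ds + \EE\int_0^T \Lambda(s)\DD_s^j\bar u(s)\,ds=0$ for every admissible variation $\bar u\in U[0,T]$, where $\Theta(s)$ is the sum of the deterministic-type integrands and $\Lambda(s)$ collects the coefficients of $\DD_s^j\bar u$. Now specializing $\bar u(s)=\mathbf{1}_{[a,b]}(s)\tilde u$ with $\tilde u$ any bounded $\cG_a$-measurable random variable (this class lies in $U[0,T]$ because $\mathbf{1}_{[a,b]}$ is trivially $C^\mu$-compatible on each side after a standard smoothing argument, or one passes to the limit from smooth $\cG_a$-adapted approximations), the second group of terms has $\DD_s^j\bar u=\tilde u\,\DD_s^j\mathbf{1}_{[a,b]}(s)$ but by applying \eref{ito-expectation} backwards one recognizes
\[
\EE\int_a^b\Lambda(s)\DD_s^j\bar u(s)\,ds=\EE\left[\bar u\int_a^b\Lambda(s)\,dB_j^H(s)\right],
\]
so the $\DD_s^j\bar u$ contributions are exactly the $dB_j^H(s)$ integrals appearing in \eref{partial}.

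Finally, since $\tilde u$ is an arbitrary bounded $\cG_a$-measurable random variable, pulling $\tilde u$ out of the expectation and conditioning on $\cG_a$ yields that the $\cG_a$-conditional expectation of the bracketed expression must vanish almost surely, which is precisely \eref{partial}. The main technical obstacle is justifying the application of \eref{expectation}: one must verify the Malliavin differentiability and the integrability hypotheses of Propositions~\ref{p.2.3}--\ref{p.2.4} for the products ${l_x^*}^\top(t)\Phi(t)\Phi^{-1}(s)\sigma_u^{j,*}(s)\bar u(s)$ and the corresponding terminal-cost version. This requires using the explicit formulas \eref{e.2.18}--\eref{e.2.19} together with the Fernique-type exponential bounds on $\|\Phi\|$, $\|\Phi^{-1}\|$, and $\|x^*\|_{\beta}$ that follow from Lemma~\ref{l.4.2}, and exploiting the defining integrability condition $\int_0^T\!\int_0^T\EE|D_s^H u(t)|^2\,ds\,dt<\infty$ in the admissibility class \eref{e.5.1}; once these integrability checks are in place, every step above is a straightforward rearrangement.
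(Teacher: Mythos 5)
Your proposal follows essentially the same route as the paper: differentiate $J$ at $u^*$, substitute the representation \eref{y3} for $y$, convert the Stratonovich terms via \eref{expectation} with a Leibniz split, specialize to $\bar u=\mathbf{1}_{[a,b]}\tilde u$ with $\tilde u$ being $\cG_a$-measurable, and use \eref{ito-expectation} in reverse to turn the $\DD_s^j\bar u$ contributions into $dB_j^H$ integrals before invoking arbitrariness of $\tilde u$. The only quibble is the passing claim $\DD_s^j\bar u=\tilde u\,\DD_s^j\mathbf{1}_{[a,b]}(s)$ (the indicator is deterministic, so the derivative falls on $\tilde u$), but since you immediately discard it in favor of the correct duality identity, the argument matches the paper's.
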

\begin{remark} Using pathwise integral, we can write the above equation as
\begin{eqnarray}
&& \EE \Bigg[ \Bigg\{ \int_a^b  \left[ \int_s^T
{l_x^*}^\top(t)\Phi(t)dt  \Phi^{-1}(s)b_u^*(s)  +  {l_u^*}^\top(s)
   +
  h_x^\top(x^*(T))\Phi(T)\Phi^{-1}(s)b_u^*(s)  \right]     ds\nonumber\\
&~&   \sum\limits_{j=1}^m \int_a^b \bigg[
\int_s^T {l_x^*}^\top(t)\Phi(t)\Phi^{-1}(s)\sigma_u^{j,
*}(s) dt \nonumber \\
&&\quad  +
  h_x^\top(x^*(T))\Phi(T)\Phi^{-1}(s)\sigma_u^{j,
*}(s)\bigg]  d^\circ B_j^H(s) \Bigg\} \bigg|\cG  _a  \Bigg]  =0\,,\quad \forall \ \ 0\le a\le b\le T\,.
\end{eqnarray}
\end{remark}

\setcounter{equation}{0}
\subsection{Stochastic control with complete information}
Now we assume that the filtration $\cG=\cF$ and note that it is also the filtration generated by  the (background)  defining  Brownian motions $W$.   Namely,
\[
\cF  _t=\si\left(B_1^H(s)\,, \cdots\,, B_m^H(s)\,, \ 0\le s\le t \right)
=\si\left(W_1 (s)\,, \cdots\,, W_m (s)\,, \ 0\le s\le t \right)
\,.
\]
We shall simplify the equation \eref{partial}.   Denote
\begin{eqnarray*}
F(T,s)&=&\left(\int_s^T{l_x^*}^\top (t)\Phi(t)dt\right)\Phi^{-1}(s)b_u^*(s)+{l_u^*}^\top (s)+h_x^\top (x^*(T))\Phi(T)\Phi^{-1}(s)b_u^*(s)\,,\\
G_j (T,s)&=& \left(\int_s^T{l_x^*}^\top (t)\Phi(t)dt\right)\Phi^{-1}(s)\sigma^{j, *}_u(s)+h_x^\top (x^*(T))\Phi(T)\Phi^{-1}(s)\sigma_u^{j, *}(s) \,, \\
\tilde F(T,s)&=&  F(T,s)+\sum_{j=1}^m \mathbb{D}^j_s G_j (T,s)\,.
\end{eqnarray*}
Then  the equation \eref{partial} can be written as
\begin{equation}
\EE \left\{ \int_a^b \tilde F(T, s) ds+\sum_{j=1}^m \int_a^b G_j (T, s)dB_j^H(s)\Big|
\cF  _a\right\}=0\,.\label{e.6.1}
\end{equation}
We shall simplify the above equation \eref{e.6.1} in the case of complete information.
To this end we need some lemmas.

\begin{lemma}\label{l.6.1}  Let $1/2<H<1$, and $\e>0$.
Denote
\[
\rho(\e)=\int_0^a\int_0^a(t-s)^{2H-2}s^{\frac12-H}(a-s+\e)^{-H-\frac12}t^{\frac12-H}(a-t+\e)^{-H-\frac12}dsdt.
\]
For any $\delta\in(0,2-2H),$  there exist $C_\de >0$,  depending on $H, a$ and $\de$ but independent of $\e$,   such that
\begin{equation}
\rho(\e)\ge C_\de   \e^{1-2H-\delta}\,.
\end{equation}
\end{lemma}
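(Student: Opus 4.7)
The plan is to localize the integral to a small square near the corner $(a,a)$ where the divergence originates (the other potential singularities, at $s=0$, $t=0$, $s=t$, are all integrable since $H>1/2$). Concretely, I would fix a small $\eta>0$ with $\eta<a/2$ and restrict the integration to $(a-\eta,a)\times(a-\eta,a)$. On this square the factors $s^{1/2-H}$ and $t^{1/2-H}$ are bounded below by the positive constant $a^{1/2-H}$, so they may be replaced by a constant at the cost of a multiplicative factor. What is left is essentially
\[
\rho(\e)\;\ge\; C\int_{a-\eta}^{a}\!\!\int_{a-\eta}^{a}|t-s|^{2H-2}(a-s+\e)^{-H-\frac12}(a-t+\e)^{-H-\frac12}\,ds\,dt.
\]

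Next I would perform the change of variables $u=a-s+\e$, $v=a-t+\e$, which sends the square to $[\e,\eta+\e]^2$ and turns $|t-s|$ into $|u-v|$. Then I rescale by $\e$ via $u=\e x$, $v=\e y$. The Jacobian contributes $\e^{2}$, and the integrand scales by $\e^{2H-2}\cdot \e^{-H-1/2}\cdot \e^{-H-1/2}=\e^{2H-3-2H}=\e^{-3}$. Combining gives an overall factor $\e^{-1}$, and
\[
\rho(\e)\;\ge\; C\,\e^{-1}\int_{1}^{1+\eta/\e}\!\!\int_{1}^{1+\eta/\e}|x-y|^{2H-2}x^{-H-\frac12}y^{-H-\frac12}\,dx\,dy.
\]

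The key remaining step is to show that, as $\e\to 0^+$, the double integral above converges (by monotone convergence, since the integrand is nonnegative) to a strictly positive finite number
\[
I_{H}\;:=\;\int_{1}^{\infty}\!\!\int_{1}^{\infty}|x-y|^{2H-2}x^{-H-\frac12}y^{-H-\frac12}\,dx\,dy.
\]
Positivity is trivial. Finiteness needs a short tail estimate: the singularity on the diagonal $|x-y|^{2H-2}$ is integrable because $2H-2>-1$; and at infinity, splitting the region by symmetry and writing $y=x+s$ with $s>0$, the two subregions $\{0<s<x\}$ and $\{s>x\}$ each yield an integrand of order $x^{-2}$ after the inner integration, which is integrable at infinity. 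Consequently $I_{H}\in(0,\infty)$, so for all sufficiently small $\e$ the rescaled integral is $\ge I_{H}/2$, giving $\rho(\e)\ge C\,\e^{-1}$. This is strictly stronger than the stated bound, because for any $\delta\in(0,2-2H)$ one has $\e^{-1}\ge \e^{1-2H-\delta}$ for $\e$ small, after which the constant $C_{\delta}$ can absorb the transition at any fixed $\e_0$.

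The only delicate point is the tail estimate establishing $I_{H}<\infty$; the localization and the two substitutions are routine. If one wanted to avoid verifying $I_{H}<\infty$, one could instead truncate the rescaled integral at an $\e$-dependent radius $R(\e)=\e^{-\theta}$ with a small $\theta>0$, bound the resulting finite integral from below by elementary comparison, and optimize $\theta$: this produces exactly the $\e^{1-2H-\delta}$ bound stated in the lemma, with $\delta$ arising from the loss incurred by the truncation.
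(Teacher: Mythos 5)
Your proof is correct, and it takes a genuinely different route from the paper's. The paper (after normalizing $a=1$) exploits the elementary pointwise inequality $|t-s|^{2H-2}(1-s+\e)^{\delta}\ge 1$ on $\{0\le s\le t\le 1\}$ (valid because $t-s\le 1-s+\e$ and $2H-2+\delta<0$) to replace the off-diagonal kernel $|t-s|^{2H-2}$ by $(1-s+\e)^{-\delta}$; this decouples the two variables, the iterated integral is then computed in closed form, and the asymptotic $\e^{1-2H-\delta}$ drops out. The loss of $\delta$ in the exponent is precisely the price of that substitution. You instead localize near the corner $(a,a)$, rescale by $\e$, and identify the exact scaling $\rho(\e)\gtrsim \e^{-1}$, the only nontrivial point being finiteness of the limiting integral $I_H$ — and your tail estimate is right: writing $y=x+s$, the regions $0<s<x$ and $s>x$ contribute $x^{-2H-1}\cdot x^{2H-1}=x^{-2}$ and $x^{-H-\frac12}\cdot x^{H-\frac32}=x^{-2}$ respectively, both integrable on $[1,\infty)$, while the diagonal singularity is integrable since $2H-2>-1$. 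Your bound is strictly stronger than the lemma's for small $\e$ (since $-1<1-2H-\delta$), and it shows the $\delta$-loss in the stated lemma is an artifact of the paper's decoupling trick rather than intrinsic. One shared caveat: since $\rho(\e)\asymp\e^{-2H-1}$ as $\e\to\infty$, neither your bound nor the lemma's can hold with a uniform constant for \emph{all} $\e>0$; both arguments really establish the estimate for $\e$ in a neighborhood of $0$, which is all that is used downstream (Lemma \ref{l.6.3} only needs $\e\to 0$), so this does not affect the paper.
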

\begin{proof} Without loss of generality, we prove the case when $a=1.$\\
We choose some $\delta\in(0,2-2H)$, and we have
 $|t-s|^{2H-2}(1-s+\e)^{\delta}=|t-s|^{2H-2+\delta}(\frac{1-s+\e}{t-s})^{\delta}\ge1, $  when $0\le s\le t\le 1$.
\begin{align*}
&\int_0^1\int_0^1|t-s|^{2H-2}s^{\frac12-H}t^{\frac12-H}(1-s+\e)^{-H-\frac12}(1-t+\e)^{-H-\frac12}dsdt\\
=&2 \int_0^1\int_0^t|t-s|^{2H-2}s^{\frac12-H}t^{\frac12-H}(1-s+\e)^{-H-\frac12}(1-t+\e)^{-H-\frac12}dsdt\\
\ge &2 \int_0^1\int_0^t|t-s|^{2H-2}(1-s+\e)^{-H-\frac12}(1-t+\e)^{-H-\frac12}dsdt\\
\ge &2 \int_0^1\int_0^t(1-s+\e)^{-H-\frac12-\delta}(1-t+\e)^{-H-\frac12}dsdt\\
=&C \int_0^1 \left[(1-t+\e)^{\frac12-H-\delta}-(1+\e)^{\frac12-H-\delta}\right](1-t+\e)^{-H-\frac12}dt\\
=&C\left(\frac1{2H+\delta-1} [\e^{1-2H-\delta}-(1+\e)^{1-2H-\delta}]-\frac1{H-\frac12}[\e^{-H+\frac12}-(1+\e)^{-H+\frac12}](1+\e)^{\frac12-H-\delta}\right)\\
\sim& \e^{1-2H-\delta} \text{ as } \e \to 0.
\end{align*}
\end{proof}

\begin{lemma}\label{l.6.2}
Let $X_t$  be  a Gaussion random variable with 0 mean and variance $f^2(t)$.   If   $\lim_{t\to 0}f(t)=\infty$,  then
$\lim_{t\to 0}|X_t|=\infty$ in probability.
\end{lemma}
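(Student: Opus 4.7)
The plan is to reduce the statement to a one-line check about a standard Gaussian. Recall that convergence in probability $|X_t|\to\infty$ means that for every $M>0$ we need to show
\[
\lim_{t\to 0} P\bigl(|X_t|\le M\bigr)=0.
\]
Since $X_t$ is mean-zero Gaussian with variance $f^2(t)$ (and $f(t)\to\infty$ guarantees $f(t)\neq 0$ for $t$ small), the key observation is that for each fixed $t$ we may write $X_t = f(t)Z$ where $Z\sim N(0,1)$ (the reduction only uses the marginal distribution of $X_t$, so we need not worry about any joint structure of the family $\{X_t\}$).

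Thus for any $M>0$ and $t$ small enough,
\[
P\bigl(|X_t|\le M\bigr) = P\!\left(|Z|\le \frac{M}{f(t)}\right) = \frac{1}{\sqrt{2\pi}}\int_{-M/f(t)}^{M/f(t)} e^{-x^2/2}\,dx \le \frac{2M}{\sqrt{2\pi}\,f(t)}.
\]
Since $f(t)\to\infty$ as $t\to 0$, the right-hand side tends to $0$, which gives the desired conclusion.

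There is no real obstacle here; the only point to be careful about is that the statement is only about the marginals (convergence in probability of a family indexed by $t$), so one does not need to invoke anything about joint Gaussianity or continuity of sample paths of $t\mapsto X_t$. The proof is essentially a one-liner via the standard estimate on the Gaussian density.
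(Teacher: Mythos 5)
Your proof is correct, but it takes a slightly different route from the paper's. You work directly with the definition of convergence in probability: fixing $M>0$, you write $X_t=f(t)Z$ with $Z\sim N(0,1)$ and bound $P(|X_t|\le M)=P(|Z|\le M/f(t))\le \frac{2M}{\sqrt{2\pi}\,f(t)}\to 0$ by estimating the standard Gaussian density by its maximum. The paper instead computes the exponential moment
\[
\EE\left(e^{-|X_t|}\right)=\frac{2}{\sqrt{2\pi}}\int_0^\infty e^{-f(t)x}e^{-x^2/2}\,dx\le \frac{2}{\sqrt{2\pi}\,f(t)},
\]
concludes that $e^{-|X_t|}\to 0$ in $L^1$ and hence in probability, and then translates that back into $|X_t|\to\infty$ in probability. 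Both arguments hinge on the same $1/f(t)$ decay coming from the scaling $X_t=f(t)Z$, and both are one-line integral estimates; yours is marginally more elementary in that it avoids the detour through $L^1$ convergence of the auxiliary variable $e^{-|X_t|}$ and the (implicit) equivalence between $|X_t|\to\infty$ in probability and $e^{-|X_t|}\to 0$ in probability. Your remark that only the marginal law of each $X_t$ matters is also accurate and worth keeping.
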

\begin{proof}  We have
\begin{align*}
E(e^{-|X_t|})&=\frac2{\sqrt{2\pi}}\int_0^\infty e^{-f(t)x}e^{-\frac {x^2}{2}}dx\\
&\le \frac2{\sqrt{2\pi}} \int_0^\infty e^{-f(t)x}dx = \frac2{\sqrt{2\pi}} \frac1{f(t)}\,.
\end{align*}
This implies that  $\lim_{t\to0}E(e^{-|X_t|})=0$
and hence  $e^{-|X_t|}$ goes to $0$ in $L^1$.  Consequently,  we have  $\lim_{t\to 0}|X_t|=\infty$ in probability as $t\to 0$.
\end{proof}

\begin{lemma}\label{l.6.3}  Let $g$ be continuous on $ [0, T]$  and let $f_j\,, j=1, 2, \cdots, m$ be
H\"older continuous on $[0, T]$ of   order $\rho$ with $\rho>1-H$.   Assume that the Malliavin derivative
$\DD_s^j f_j(t)$ is continuous in $s\in [0, T]$ for all  $t\in [0, T]$ and assume that
$\displaystyle \sup_{0\le s\le T} \DD_s^j \EE \left[ f_j(t)|\cF  _t\right] <\infty$ almost surely for all  $t\in [0, T]$.
If
\begin{equation}
\EE\left\{ \int_a^b g(s) ds + \sum_{j=1}^m \int_a^b f_j (s)d^\circ B_j^H(s) \Big|\mathcal F_a \right\}
=0 \,, \quad \forall \ \ 0\le a\le b\le T\,, \label{e.6.3}
\end{equation}
 then  for all $j=1, 2, \cdots, m$,
\begin{equation}
\EE\left[   f_j(a)  |  \mathcal F_a \right]=0 \,, \quad \forall \ \ 0< a \le T\,.\label{e.6.4}
\end{equation}
\end{lemma}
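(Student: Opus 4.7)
I would proceed by contradiction. Suppose there exist an index $j_0$ and a time $a\in(0,T]$ such that the $\mathcal{F}_a$-measurable random variable $\bar f_{j_0}(a):=\EE[f_{j_0}(a)\mid\mathcal{F}_a]$ is non-zero on a set of positive probability. First I would localize: applying the hypothesis \eref{e.6.3} at $(0,a)$ and $(0,a+\e)$ and subtracting, the tower property yields, for every small $\e>0$,
\begin{equation*}
\EE\Bigl[\int_a^{a+\e}g(s)\,ds+\sum_{j=1}^m\int_a^{a+\e}f_j(s)\,d^\circ B_j^H(s)\,\Big|\,\mathcal{F}_a\Bigr]=0.
\end{equation*}
The $g$-term is deterministic and contributes $O(\e)$.

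Next I would extract the dominant stochastic contribution. Using the H\"older regularity of $f_j$ (order $\rho>1-H$) and of $B_j^H$ (order $\beta<H$ with $\rho+\beta>1$), the Young/Stratonovich estimate of Proposition~\ref{p2} gives
\begin{equation*}
\int_a^{a+\e}f_j(s)\,d^\circ B_j^H(s)=f_j(a)\bigl[B_j^H(a+\e)-B_j^H(a)\bigr]+R_j(\e),\qquad |R_j(\e)|\le C\e^{\rho+\beta}\text{ a.s.}
\end{equation*}
Now, using the Volterra representation \eref{e.2.3a}, split
$B_j^H(a+\e)-B_j^H(a)=N_j^\e+M_j^\e$, where $N_j^\e:=\int_0^a[Z_H(a+\e,s)-Z_H(a,s)]dW_j(s)\in\mathcal{F}_a$ and $M_j^\e:=\int_a^{a+\e}Z_H(a+\e,s)dW_j(s)$ is mean-zero Gaussian, independent of $\mathcal{F}_a$. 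Conditional integration-by-parts (Malliavin duality on the underlying Wiener integral) together with the hypothesis $\sup_s \DD_s^j\EE[f_j(a)\mid\mathcal{F}_a]<\infty$ gives $\EE[f_j(a)M_j^\e\mid\mathcal{F}_a]=O(\e^{H+1/2})$. Since $N_j^\e$ is $\mathcal{F}_a$-measurable, $\EE[f_j(a)N_j^\e\mid\mathcal{F}_a]=\bar f_j(a)N_j^\e$. Consequently,
\begin{equation*}
\sum_{j=1}^m \bar f_j(a)\,N_j^\e=O(\e)+O(\e^{H+1/2})+O(\e^{\rho+\beta}),
\end{equation*}
and every error on the right is of smaller order than the Gaussian scale $\e^H$ of $N_j^\e$ (in $L^p$).

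To conclude I would invoke Lemmas \ref{l.6.1} and \ref{l.6.2}. The key observation is that on the event where $\bar f_{j_0}(a)\ne 0$, an appropriate linear combination of the $N_j^\e$ (built from the kernel $s^{1/2-H}(a-s+\e)^{-H-1/2}$ appearing in Lemma~\ref{l.6.1}) produces an $\mathcal{F}_a$-measurable Gaussian whose variance is bounded below by $\rho(\e)\gtrsim\e^{1-2H-\delta}$ after suitable rescaling. By Lemma~\ref{l.6.2}, the absolute value of this Gaussian tends to $+\infty$ in probability on $\{\bar f_{j_0}(a)\ne 0\}$, which contradicts the identity derived above (whose right-hand side is controlled by polynomial-in-$\e$ error terms). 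Thus $\bar f_j(a)=0$ a.s.\ for each $j$.

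\textbf{Main obstacle.} The delicate point is Step~3, implementing the blow-up contradiction: one must identify the precise $\mathcal{F}_a$-measurable test functional whose covariance structure reproduces the kernel $\rho(\e)$ of Lemma~\ref{l.6.1}, then track the Malliavin-derivative error terms carefully enough to ensure they remain strictly of smaller order than the diverging Gaussian scale. Verifying the lower bound $\rho(\e)\gtrsim \e^{1-2H-\delta}$ in the presence of the cross-terms involving $\DD_s^j f_j$ and $\DD_s^j\bar f_j(a)$ is the technical heart of the argument.
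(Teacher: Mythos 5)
Your overall strategy is the paper's: localize \eref{e.6.3} to $[a,a+\e]$, freeze $f_j$ at $a$ at the cost of an $O(\e^{\rho+\beta})$ Young-integral remainder via Proposition \ref{p2}, isolate $\EE\left[f_j(a)\left(B_j^H(a+\e)-B_j^H(a)\right)\mid\cF_a\right]$, and derive the contradiction from Lemmas \ref{l.6.1} and \ref{l.6.2} by showing that the $\cF_a$-measurable Gaussian multiplying $\EE[f_j(a)\mid\cF_a]$, rescaled by the error exponent, blows up in probability. The one substantive difference is how that conditional expectation is computed: you split the increment through the Volterra representation into $N_j^\e\in\cF_a$ plus an independent piece $M_j^\e$, whereas the paper first converts the pathwise integral to a Wick integral (Proposition \ref{p.2.4}) and then applies the projection formula (9.21) of \cite{Hu1}, landing on $\EE[f_j(a)\mid\cF_a]\int_0^a\zeta(a,\e,s)\,dB_j^H(s)$ plus a correction involving $\DD_s^j\EE[f_j(a)\mid\cF_a]$, which is precisely what the hypotheses control. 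Since $N_j^\e=\EE[B_j^H(a+\e)-B_j^H(a)\mid\cF_a]$, your Gaussian and the paper's are the same object, and your route is arguably more transparent; but it transfers the entire content of the paper's Step 3 onto the unproved claim that $\mathrm{Var}(N_j^\e)$ dominates the error scale, i.e.\ you still must carry out the explicit computation of $\zeta$ to connect $\int_0^a[Z_H(a+\e,s)-Z_H(a,s)]^2\,ds$ to the kernel of Lemma \ref{l.6.1}. You correctly flag this as the main obstacle.

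The one genuine gap is the estimate $\EE[f_j(a)M_j^\e\mid\cF_a]=O(\e^{H+1/2})$. By duality this conditional expectation equals $\EE\left[\int_a^{a+\e}Z_H(a+\e,s)\,D_s^jf_j(a)\,ds\mid\cF_a\right]$, so you need pointwise control of the Wiener--Malliavin derivative $D_s^jf_j(a)$ for $s\in(a,a+\e]$. The hypotheses of the lemma only control the fractional derivatives $\DD_s^jf_j(t)$ and $\DD_s^j\EE[f_j(t)\mid\cF_t]$, and the relation in Proposition \ref{p.2.6} does not let you recover a bound on $D_s^j$ from a bound on $\DD_s^j$. The paper avoids this by keeping everything in the $\DD$-calculus: the pathwise-to-Wick correction $\int_a^{a+\e}\DD_s^jf_j(a)\,ds=O(\e)$ uses the assumed continuity of $\DD_s^jf_j(a)$, and the projection formula produces only $\DD_s^j\EE[f_j(a)\mid\cF_a]$, whose supremum is assumed finite. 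To close your variant you would either need to add a hypothesis on $D_s^jf_j(a)$ near $s=a$ or reroute that term through the Wick integral and the projection formula as the paper does.
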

\begin{proof}

\noindent\textbf{Step 1}.\quad  Let $\e>0$ and $0<a<a+\e<T$.    The equation \eref{e.6.3}  can be written as
\begin{eqnarray}
&&\EE\left\{ \int_a^{a+\e}  g(s) ds + \sum_{j=1}^m \int_a^{a+\e}  \left[  f_j (s)-f_j(a)\right]
d^\circ B_j^H(s) \Big|\mathcal F_a \right\}  \nonumber\\
&&\qquad
+\sum_{j=1}^m  \EE \left\{ f_j(a) \left[ B_j^H(a+\e)-B_j^H(a)\right]  \Big|\mathcal F_a \right\}
=0 \, \label{e.6.5}
\end{eqnarray}
Let   $0<\be<H$ such that $\be+\rho> 1$.   Then from \eref{e.2.5},  it follows  that
\begin{eqnarray*}
\left| \int_a^{a+\e }  \left[  f_j (s)-f_j(a)\right]
d^\circ B_j^H(s)\right|
&\le & \|B_j^H\|_{\be, 0, T} \e^{\be +\rho} \,.
\end{eqnarray*}
Dividing \eref{e.6.5} by $\e^{H'}$ with  $0<H'<1$, then
we have
\begin{equation}
\lim_{\e\rightarrow  0}
\frac{1}{\e^{H'} } \sum_{j=1}^m  \EE \left\{ f_j(a) \left[ B_j^H(a+\e)-B_j^H(a)\right]  \Big|\mathcal F_a \right\}   =0\,.
\label{e.6.6}
\end{equation}
It is maybe possible to compute the above expectation in an easy way.  But instead of developing a formula
for above conditional expectation we shall use the results from \cite{Hu1}.  First, we have
\[
\int_a^{a+\e} f_j(a) dB_j^H(s)=  f_j(a) \left[ B_j^H(a+\e)-B_j^H(a)\right] -\int_a^{a+\e} \DD_s^j f_j(a) ds
\]
By the  continuity of $\DD_s f_j(a)$,  \eref{e.6.6} implies
\begin{equation}
\lim_{\e\rightarrow  0}
\frac{1}{\e^{H'} } \sum_{j=1}^m  \EE \left\{ \int_a^{a+\e} f_j(a) dB_j^H(s)
 \Big|\mathcal F_a \right\}   =0\,, \quad \forall a\in [0, T]\,.
\label{e.6.7}
\end{equation}

\medskip
\noindent\textbf{Step 2}.\quad Let us recall  some  notations in \cite{Hu1}  (see Equations  (5.21), (5.34),   and (9.22) of \cite{Hu1}).
\begin{eqnarray*}
&&{\bf\Gamma}_{H,T}^*f(t)=(H-\frac12)\kappa_Ht^{\frac12-H}\int_t^T\xi ^{H-\frac12}(\xi -t)^{H-\frac32}f(\xi )d\xi \,,\\
&&{\mathbb B}_{H,\tau}^*g(t)=-\frac{2H\kappa_1}{\kappa_H}t^{\frac12-H}\frac{d}{dt}\int_t^\tau(\eta -t)^{\frac12-H}\eta ^{H-\frac12}g(\eta )d\eta \,,
\end{eqnarray*}
and
\[
{\mathbb P}_{H,\tau}(t)f(t)={\mathbb B}^*_{H,\tau}{\bf \Gamma}_{H,T}^*f(t)\,.
\]
Then from the equation (9.21) of \cite{Hu1}, we have
\begin{eqnarray}
&~&\EE\left[\left.\int_a^{a+\e}f_j(a)dB^H_j(s) \right|{\cal F}_a\right]\nonumber  \\
&=&\int_0^a{\mathbb P}_{H,a}(s)\EE\left[\left.{\bf 1}_{[a,a+\e]}(s)f_j(a) \right|{\cal F}_a\right]dB^H_j(s) \nonumber  \\
&=&\int_0^a\left\{C
s^{\frac12-H}\frac{d}{ds}\int_s^a\left[(\eta -s)^{\frac12-H}\eta ^{H-\frac12}(H-\frac12)\kappa_H\eta ^{\frac12-H}\right.\right.
\nonumber  \\
&&\cdot \left.\left.\int_\eta ^T\left(\xi ^{H-\frac12}(\xi -\eta )^{H-\frac32}\EE\left[{\bf 1}_{[a,a+\e]}(\xi )f_j(a )|{\cal F}_a\right]\right)d\xi  \right]d\eta  \right\}dB^H_j(s) \nonumber \\
&=&C \int_0^a \zeta(a, \e, s) \EE\left[f_j(a )|{\cal F}_a\right] dB^H_j(s) \nonumber \\
&=&C\EE\left[f_j(a )|{\cal F}_a\right]  \int_0^a \zeta(a, \e, s)   dB^H_j(s) -C\int_0^a \zeta(a, \e, s)  \DD_s^j \left(\EE(f_j(a)|\cF  _a]
\right) ds \,,  \nonumber \\
\label{e.6.8}
\end{eqnarray}
where
\[
\zeta(a, \e, s)  =
s^{\frac12-H}\frac{d}{ds}\int_s^a(\eta -s)^{\frac12-H}\\
  \left[\int_a^{a+\e}  \xi ^{H-\frac12}(\xi -\eta )^{H-\frac32}
d\xi  \right]d\eta  \,.
\]

\medskip
\noindent\textbf{Step 3}.\quad
This function $\zeta$ can be calculated as follows.
\begin{eqnarray*}
\zeta(a, \e, s)
&=& s^{\frac12-H} \frac{d}{ds}\int_a^{a+\e}\int_s^a  (\eta -s)^{\frac12-H}  (\xi -\eta )^{H-\frac32}  \xi ^{H-\frac12} d\eta  d\xi \\
&=& s^{\frac12-H} \frac{d}{ds}\int_a^{a+\e}\left( \int_s^\xi   (\eta -s)^{\frac12-H}  (\xi -\eta )^{H-\frac32}\   d\eta \right.\\
&&~~~~\left.-\int_a^\xi   (\eta -s)^{\frac12-H}  (\xi -\eta )^{H-\frac32}\  d\eta \right) \xi ^{H-\frac12}d\xi \\
&=&  -s^{\frac12-H}\frac{d}{ds}\int_a^{a+\e}\int_a^\xi   (\eta -s)^{\frac12-H}  (\xi -\eta )^{H-\frac32}  d\eta  \xi ^{H-\frac12}d\xi \\
&=&  s^{\frac12-H}\int_a^{a+\e}\int_a^\xi  (\frac 12 -H) (\eta -s)^{-\frac12-H}  (\xi -\eta )^{H-\frac32}  d\eta \xi ^{H-\frac12} d\xi \\
&=& \e^{H+\frac12}s^{\frac12-H} \int_0^1 \int_0^1 (\eta '\xi '\e+a-s)^{-\frac12-H}(1-\eta ')^{H-\frac32} \xi '^{H-\frac12}\\
&&\qquad (\xi '\e+a)^{H-\frac12} d\eta 'd\xi '\,.
\end{eqnarray*}
 Choose $ \ga \in(0,1)$.
Dividing \eref{e.6.8} by $\e^\ga$,    we see
\[
\lim_{\e\rightarrow 0}   \sum_{j=1}^m  \EE\left[ f_j (a )|{\cal F}_a\right]   \frac{1}{\e^{\ga} }   \int_0^a \zeta(a, \e, s)   dB^H_j(s)=0\,.
\]
But
\begin{align*}
 \frac{1}{\e^{\ga} }   \int_0^a \zeta(a, \e, s)   dB^H_j(s)
&= \e^{H+\frac12-\ga }  \int_0^1\int_0^1 \left(\int_0^a s^{\frac12-H}(\eta \xi \e+a-s)^{-\frac12-H}dB_j(s)^H\right)\\
&~~~~\times(1-\eta )^{H-\frac32} \xi ^{H-\frac12}(\xi \e+a)^{H-\frac12} d\eta  d\xi
\end{align*}
and hence
\begin{align*}
&\EE \left(\frac{1}{\e^{\ga} }   \int_0^a \zeta(a, \e, s)   dB^H_j(s)\right)^2\\
=& C\e^{2H+1-2\ga} \int_{[0,1]^4} \int_0^a\int_0^a |t-s|^{2H-2}s^{\frac12-H}t^{\frac12-H}(\eta_1\xi_1 \e+a-s)^{-\frac12-H}(\eta_2\xi_2 \e+a-t)^{-\frac12-H}dsdt\\
&\quad\quad\qquad\times \prod_{i=1}^2(1-\eta_i )^{H-\frac32} \xi_i ^{H-\frac12}(\xi_i \e+a)^{H-\frac12}d\eta_1  d\xi_1  d\eta_2  d\xi_2 \\
\ge &C\e^{2H+1-2\ga} \int_{[0,1]^4} \int_0^a\int_0^a |t-s|^{2H-2}s^{\frac12-H}t^{\frac12-H}( \e+a-s)^{-\frac12-H}( \e+a-t)^{-\frac12-H}dsdt\\
&\quad\quad\qquad\times \prod_{i=1}^2(1-\eta_i )^{H-\frac32} \xi_i ^{H-\frac12}(\xi_i \e+a)^{H-\frac12}d\eta_1  d\xi_1  d\eta_2  d\xi_2 \\
=& C_\de\e^{2H+1-\ga+1-2H-\de}\\
=& C_\de\e^{2-2\ga-\de}
\end{align*}
We may choose $\ga\in(0,1)$ and $\de\in(0,2-2H)$ such that $2-2\ga-\de<0.$ From Lemma \ref{l.6.2} we see that   $\frac{1}{\e^{\ga} }   \int_0^a \zeta(a, \e, s)   dB^H_j(s) $ converges to $\infty$ in probability as $\e\rightarrow 0$. This implies that $\EE\left[ f_j (a )|{\cal F}_a\right]=0$ a.s..
\end{proof}

Then we have also the following lemma.
\begin{lemma}\label{l.6.4}  Let $g$ be continuous on $ [0, T]$  and let $f_j\,, j=1, 2, \cdots, m$ be
H\"older continuous on $[0, T]$ of   order $\rho$ with $\rho>1-H$.   Assume that the Malliavin derivative
$\DD_s^j f_j(t)$ is continuous in $s\in [0, T]$ for all  $t\in [0, T]$ and assume that
$\displaystyle \sup_{0\le s\le T} \DD_s^j \EE \left[ f_j(t)|\cF  _t\right] <\infty$ almost surely for all  $t\in [0, T]$.
If
\begin{equation}
\EE\left\{ \int_a^b g(s) ds + \sum_{j=1}^m \int_a^b f_j (s)d  B_j^H(s) \Big|\mathcal F_a \right\}
=0 \,, \quad \forall \ \ 0\le a\le b\le T\,, \label{e.6.9}
\end{equation}
 then  for all $j=1, 2, \cdots, m$,
\begin{equation*}
\EE\left[   f_j(a)  |  \mathcal F_a \right]=0  \quad \label{e.6.10}
\end{equation*}
and \[\EE \left[ g(a) |\mathcal F_a\right]=0 \]
 for all $ 0< a \le T\,.$
\end{lemma}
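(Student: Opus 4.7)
My plan is to reduce the $f_j$ part of the conclusion to Lemma~\ref{l.6.3} via the Skorohod--Stratonovich bridge (Proposition~\ref{p.2.4}), and then bootstrap what we learn about $f_j$ to obtain the $g$ part. First, Proposition~\ref{p.2.4} gives
\[
\int_a^b f_j(s)\, dB_j^H(s) = \int_a^b f_j(s)\, d^\circ B_j^H(s) - \int_a^b \DD_s^j f_j(s)\, ds,
\]
so \eref{e.6.9} rewrites as
\[
\EE\Bigl\{\int_a^b \tilde g(s)\, ds + \sum_{j=1}^m \int_a^b f_j(s)\, d^\circ B_j^H(s)\,\Big|\,\cF_a\Bigr\} = 0,
\]
where $\tilde g(s) := g(s) - \sum_{j=1}^m \DD_s^j f_j(s)$ remains continuous by hypothesis. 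Lemma~\ref{l.6.3} applied to this identity immediately yields $\EE[f_j(a)|\cF_a] = 0$ for every $0 < a \le T$ and every $j$.

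\textbf{Step 2: the $g$ conclusion.} Next I return to \eref{e.6.9} in its divergence form and argue that $\EE\{\int_a^{a+\e} f_j(s)\, dB_j^H(s)\,|\,\cF_a\} = o(\e)$ as $\e \downarrow 0$; dividing \eref{e.6.9} by $\e$ and passing to the limit will then give $\EE[g(a)|\cF_a] = 0$ by continuity of $g$. Split $f_j(s) = f_j(a) + (f_j(s) - f_j(a))$. For the constant-in-$s$ piece, the explicit projection formula \eref{e.6.8} (established inside the proof of Lemma~\ref{l.6.3}) gives
\[
\EE\Bigl\{\int_a^{a+\e} f_j(a)\, dB_j^H(s)\,\Big|\,\cF_a\Bigr\} = C\,\EE[f_j(a)|\cF_a]\!\int_0^a \zeta(a,\e,s)\, dB_j^H(s) - C\!\int_0^a \zeta(a,\e,s)\, \DD_s^j \EE[f_j(a)|\cF_a]\, ds,
\]
and both terms vanish identically: Step~1 makes $\EE[f_j(a)|\cF_a]$ the zero random variable, whose Malliavin derivative is necessarily zero too. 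For the increment piece, converting back to Stratonovich yields
\[
\int_a^{a+\e}(f_j(s)-f_j(a))\, dB_j^H(s) = \int_a^{a+\e}(f_j(s)-f_j(a))\, d^\circ B_j^H(s) - \int_a^{a+\e} \DD_s^j(f_j(s)-f_j(a))\, ds.
\]
Estimate \eref{e.2.5} applied to $f_j - f_j(a)$ (which is $\rho$-H\"older and vanishes at $a$) bounds the pathwise integral by $C\|B_j^H\|_\be\, \e^{\rho+\be}$ with $\rho+\be > 1$; the correction integral is $o(\e)$ since $\DD_s^j f_j(s) - \DD_s^j f_j(a) \to 0$ as $s \to a$ by the assumed continuity of the Malliavin derivative. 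Taking conditional expectations and using Fernique's theorem to absorb $\|B_j^H\|_\be$ preserves the $o(\e)$ bound.

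\textbf{The main obstacle.} The crux is Step~2: extracting information about $g$ from what is essentially a Skorohod --- not an It\^o --- identity. In the adapted Brownian setting one has $\EE\{\int_a^{a+\e}\psi(s)\,dW(s)\,|\,\cF_a\} = 0$ for free, but this mechanism fails here because $\DD_s^j F$ of an $\cF_a$-measurable $F$ need not vanish for $s > a$. The argument is salvaged by the fact that the projection formula \eref{e.6.8} expresses the obstruction entirely in terms of $\EE[f_j(a)|\cF_a]$ and $\DD_s^j \EE[f_j(a)|\cF_a]$, both of which Step~1 has already pinned to zero.
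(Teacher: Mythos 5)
Your proof is correct in substance and its first half coincides with the paper's: the paper simply declares $\EE[f_j(a)|\cF_a]=0$ ``obvious from the previous lemma,'' and your Step~1 supplies exactly the intended justification, namely the Skorohod--Stratonovich conversion of Proposition~\ref{p.2.4} feeding the hypothesis of Lemma~\ref{l.6.3} with the modified drift $\tilde g$. Where you genuinely diverge is the $g$ part. The paper applies the projection formula (equation (9.21) of the Hu memoir) to the \emph{full} integrand $f_j(\cdot)$ on $[a,a+\e]$ and then uses the tower property pointwise in the inner variable $\xi$, writing $\EE[{\bf 1}_{[a,a+\e]}(\xi)f_j(\xi)|\cF_a]=\EE\bigl[{\bf 1}_{[a,a+\e]}(\xi)\,\EE[f_j(\xi)|\cF_\xi]\,\big|\,\cF_a\bigr]=0$ for every $\xi$; this makes $\EE\bigl[\int_a^{a+\e}f_j(s)\,dB_j^H(s)\big|\cF_a\bigr]$ \emph{identically} zero for every $\e$, after which $\EE[g(a)|\cF_a]=0$ follows at once. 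You instead freeze the integrand at $a$, kill the frozen piece through \eref{e.6.8} together with the already-established $\EE[f_j(a)|\cF_a]=0$, and show the increment piece is only $o(\e)$ via the Young--H\"older bound \eref{e.2.5}; that is weaker than exact vanishing but still suffices after dividing by $\e$. The trade-off: the paper's route is exact and needs no H\"older estimates, but requires trusting the projection formula for a genuinely stochastic integrand; yours uses the projection formula only for a frozen (conditionally degenerate) integrand and makes the smallness quantitative, at the price of leaning on two points the hypotheses do not quite cover --- the correction term $\int_a^{a+\e}[\DD_s^j f_j(s)-\DD_s^j f_j(a)]\,ds=o(\e)$ needs continuity of $t\mapsto\DD_s^j f_j(t)$ near the diagonal, whereas the lemma only assumes continuity in $s$ for fixed $t$, and the interchange of the $\e\to0$ limit with the conditional expectation tacitly uses integrability of $\|f_j\|_\rho\|B_j^H\|_\be$. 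Both are of the same order of informality as the paper's own treatment, so I would not call them gaps that invalidate the argument, but you should flag the extra regularity you are using.
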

\begin{proof} The first equality is obvious from the previous lemma. Now we prove the second one.
For any $\e>0,$
\begin{align*}
&\EE\left[\left.\int_a^{a+\e}f_j(s)dB^H_j(s) \right|{\cal F}_a\right]\nonumber  \\
=&\int_0^a{\mathbb P}_{H,a}(s)\EE\left[\left.{\bf 1}_{[a,a+\e]}(s)f_j(a) \right|{\cal F}_a\right]dB^H_j(s) \nonumber  \\
&\cdot \left.\left.\int_\eta ^T\left(\xi ^{H-\frac12}(\xi -\eta )^{H-\frac32}\EE\left[{\bf 1}_{[a,a+\e]}(\xi )f_j(\xi )|{\cal F}_a\right]\right)d\xi  \right]d\eta  \right\}dB^H_j(s) \nonumber \\
=&\int_0^a{\mathbb P}_{H,a}(s)\EE\left[\left.{\bf 1}_{[a,a+\e]}(s)f_j(a) \right|{\cal F}_a\right]dB^H_j(s) \nonumber  \\
&\cdot \left.\left.\int_\eta ^T\left(\xi ^{H-\frac12}(\xi -\eta )^{H-\frac32}\EE\left[\EE\left[{\bf 1}_{[a,a+\e]}(\xi )f_j(\xi )|{\cal F}_\xi|{\cal F}_a \right]\right]\right)d\xi  \right]d\eta  \right\}dB^H_j(s) \nonumber \\
=&0,
\end{align*}
then we have 
\[\EE[\int_a^{a+\e} g(s)ds|\mathcal F_a]=0, \forall \e>0,\]
and hence \[\EE[g(a)|\mathcal F_a]=0.\]
\end{proof}

We apply Lemma \ref{l.6.4} to the equation \eref{e.6.1}  and obtain
\begin{equation}
\EE \left[ G_j(T, t) \Big| \cF  _t\right]=0\,,\label{e.6.11}
\end{equation}
and
\begin{equation}
 \EE \left[ \tilde F(T, t) \Big| \cF  _t \right]=0\,,  \label{e.6.14}
\end{equation}
for all $0<t\le T.$
Denote
\begin{eqnarray}
P(t)&=& {\left(\Phi^{\top}\right) }^{ -1}(t) \left[\int_t^T\Phi^\top (s)l_x^*(s)ds+\Phi^\top (T)h_x(x^*(T)) \right]\nonumber\\
p(t)&=&{\left(\Phi^{\top}\right) }^{ -1}(t)\EE ^{{\cal F}_t}\left[\int_t^T\Phi^\top (s)l_x^*(s)ds+\Phi^\top (T)h_x(x^*(T))\right]\,.
 \label{e.6.12}
\end{eqnarray}
Then the equations \eref{e.6.11} and \eref{e.6.14} can be written as
\begin{equation}
\sum_{j=1}^m \si_u^{j, *\top } (t) p(t) =0\,, \quad \forall \ 0\le t\le T\, \label{e.6.13}
\end{equation}

and

\begin{equation}
b_u^{*\top}(t)p(t)
+l_u^*(t)+ \sum\limits_{j=1}^m \EE \left[ \DD_t ^j G_j(T, t)\Big|\cF  _t\right] =0\,,
\quad \forall \  0\le t \le  T\,. \label{e.6.15}
\end{equation}

Now we compute  $\EE \left[ \DD_t ^j G_j(T, t)\Big|\cF  _t\right]$.
Unlike in the classical case, which we have
$\EE \left[  D_t ^j G_j(T, t)\Big|\cF  _t\right]= D_t ^j\EE \left[   G_j(T, t)\Big|\cF  _t\right]$,  now we usually have
$\EE \left[ \DD_t ^j G_j(T, t)\Big|\cF  _t\right]\not= \DD_t ^j \EE \left[   G_j(T, t)\Big|\cF  _t\right]$.  We need to use \eref{e.2.19}.   Let $c_{1, H}$ be a constant as defined in  proposition \ref{p.2.4}  and $\phi_{1, H}(s, t)=c_{1, H}s^{\frac12-H} |t-s|^{2H-2}$. Then
\begin{eqnarray}
\EE \left[ \DD_t ^j G_j(T, t)\Big|\cF  _t\right]
 &=&   \int_0^T\phi_{1, H}(t-s)  \left( \frac{d}{ds} \int_s^T
(r-s)^{\frac12-H}r^{H-\frac12}     \EE \left[  D_r ^j G_j(T, t)\Big|\cF  _t\right] dr\right) ds \nonumber \\
&=&I_1+I_2\,,  \label{e.6.16}
\end{eqnarray}
where
\begin{eqnarray*}
I_1&=&   \int_0^t\phi_{1, H}(t-s)  \left( \frac{d}{ds} \int_s^t
(r-s)^{\frac12-H}r^{H-\frac12}     \EE \left[  D_r ^j G_j(T, t)\Big|\cF  _t\right] dr\right) ds\nonumber  \\
I_2&=&  \int_0^T\phi_{1, H}(t-s)  \left( \frac{d}{ds} \int_{t\vee s} ^T
(r-s)^{\frac12-H}r^{H-\frac12}     \EE \left[  D_r ^j G_j(T, t)\Big|\cF  _t\right] dr\right) ds\,.
\end{eqnarray*}
Using Proposition 1.2.8 of \cite{N}, we have
\begin{eqnarray}
I_1&=&   \int_0^t\phi_{1, H}(t-s)  \left( \frac{d}{ds} \int_s^t
(r-s)^{\frac12-H}r^{H-\frac12}      D_r ^j  \EE \left[  G_j(T, t)\Big|\cF  _t\right] dr\right) ds
\nonumber \\
&=&   \int_0^t\phi_{1, H}(t-s)  \left( \frac{d}{ds} \int_s^t
(r-s)^{\frac12-H}r^{H-\frac12}      D_r ^j   \EE \left[  P (t)\si_u^{j,*}(t)   \Big|\cF  _t\right]   dr\right) ds \nonumber \\
&=&   \int_0^t\phi_{1, H}(t-s)  \left( \frac{d}{ds} \int_s^t
(r-s)^{\frac12-H}r^{H-\frac12}      D_r ^j   \left(p(t)  \si_u^{j, *}(t)\right)
  dr\right)   ds \,.\nonumber  \\\label{e.6.17}
\end{eqnarray}
$I_2$ is computed as follows
 \begin{eqnarray}
 I_2&=&  \int_0^T\phi_{1, H}(t-s)  \left( \frac{d}{ds} \int_{t\vee s} ^T
(r-s)^{\frac12-H}r^{H-\frac12}     \EE \left[  D_r ^j \left(P(t) \si_u^{j, *}(t)\right)
\Big|\cF  _t\right] dr\right) ds\nonumber  \\
&=&   \si_u^{j, *}(t) \int_0^T\phi_{1, H}(t-s)  \left( \frac{d}{ds} \int_{t\vee s} ^T
(r-s)^{\frac12-H}r^{H-\frac12}      \EE \left[  D_r ^j  P(t)
\Big|\cF  _t\right] dr\right) ds \,. \nonumber  \\ \label{e.6.18}
\end{eqnarray}

Now let us discuss $p(t)$.  First we have
\begin{eqnarray*}
p(t)&=&{\left(\Phi^{\top}\right) }^{ -1}(t)\EE ^{{\cal F}_t}\left[\int_0^T\Phi^\top (s)l_x^*(s)ds+\Phi^\top (T)h_x(x^*(T))\right]
\nonumber\\
&&\qquad - {\left(\Phi^{\top}\right) }^{ -1}(t)  \int_0^t\Phi^\top (s)l_x^*(s)ds  \,.
\end{eqnarray*}
Since  $\left(
\EE ^{{\cal F}_t}\left[\int_0^T\Phi^\top (s)l_x^*(s)ds+\Phi^\top (T)h_x(x^*(T))\right]\,, 0\le t\le T \right)$
 is a square integrable martingale with respect to the filtration
 $\cF  _t$ generated  by the standard Brownian motion $W$,  we have
 \[
 \EE ^{{\cal F}_t}\left[\int_0^T\Phi^\top (s)l_x^*(s)ds+\Phi^\top (T)h_x(x^*(T))\right]=-\sum_{j=1}^m \int_0^t \Phi^\top (s)
 q_j(s) dW_j(s)\,,
 \]
 where the introduction of the factor $-\Phi^\top (s)$ is to simplify the equation obtained subsequently.   Therefore,
 \begin{eqnarray*}
p(t)&=&-{\left(\Phi^{\top}\right) }^{ -1}(t) \sum_{j=1}^m \int_0^t \Phi^\top (s) q_j(s) dW_j(s)
- {\left(\Phi^{\top}\right) }^{ -1}(t)  \int_0^t\Phi^\top (s)l_x^*(s)ds  \,.
\end{eqnarray*}
Denote
\[
K_t= -\sum_{j=1}^m \int_0^t \Phi^\top (s) q_j(s) dW_j(s)
\]
and
\[
A_t=  \int_0^t\Phi^\top (s)l_x^*(s)ds\,.
\]
Then by Proposition \ref{p.2.5} and equation \eref{y2},   we have
\begin{eqnarray*}
dp(t)&=&d\Phi^{\top, -1}(t)\cdot K_t+\Phi^{\top, -1}(t)\cdot dK_t-d\Phi^{\top,-1}(t)\cdot A(t)-\Phi^{\top, -1}(t)\cdot dA(t)\\
&=&-b_x^{*\top}(t)p(t)dt-l_x^*(t)dt-\sigma_x^{*\top}(t)p(t)d^\circ B^H(t)- q(t)dW(t),\\
\end{eqnarray*}
It is clear that
\[
p(T)=h_x(x^*(T)),
\]

Therefore, $p(t)$ satisfies the following backward stochastic differential equation
\begin{equation}
\begin{cases}
dp(t)  =-b_x^{*\top}(t)p(t)dt-l_x^{*}(t)dt-\sigma_x^{*\top}(t)p(t)d^\circ B^H(t)+\sum_{j=1}^m  q_j(t)dW_j(t)\,, &\quad 0\le t\le T\\
\\
p(T)=h_x(x^*(T))&
\end{cases}
\end{equation}
Or
\[
p(T)=h_x(x^*(T))+\int_t^T \left[ b_x^{*\top}(s)p(s) -l_x^*(s)\right] ds+\int_t^T \sigma_x^{*\top}(s)p(s)d^\circ B^H(s)+ \int_t^T
q_j(s)dW_j(s)\,.
\]
Combining \eref{e.6.15}, \eref{e.6.16}, \eref{e.6.17},  and \eref{e.6.18}  we have
\begin{theorem}  Let the assumptions (H4) and (H5) be satisfied.
If $(u^*, x^*)$ is an optimal  pair of stochastic control
problems \eref{6.3.2}-\eref{e.5.4a}.
Then $(u^*, x^*)$  satisfies the following system of equations.
\begin{eqnarray}
\begin{cases}
dx(t)=b(t,x(t),u(t))dt+\sum\limits_{j=1}^m\sigma^j(t,x(t),u(t))d^\circ B_j^H(t)\,, \\
x(0)=x_0\,; \\ \\
dp(t)  =-b_x^{*\top}(t)p(t)dt-l_x^*(t)dt-\sigma_x^{*\top}(t)p(t)d^\circ B^H(t)+\sum_{j=1}^m  q_j(t)dW_j(t)\,, \\
p(T)=h_x(x^*(T))\,; \\ \\
\sum_{j=1}^m \si_u^{j, *\top } (t) p(t) =0\,; \\  \\
b_u^{*\top}(t)p(t)
+l_u^*(t)+ \int_0^t\phi_{1, H}(t-s)  \left( \frac{d}{ds} \int_s^t
(r-s)^{\frac12-H}r^{H-\frac12}      D_r ^j   \left(p(t)  \si_r^{j, *}(t)\right)
  dr\right)   ds\\
 \quad +   \si_u^{j, *}(t) \int_0^T\phi_{1, H}(t-s)  \left( \frac{d}{ds} \int_{t\vee s} ^T
(r-s)^{\frac12-H}r^{H-\frac12}      \EE \left[  D_r ^j  P(t)
\Big|\cF  _t\right] dr\right) ds=0\,,
\end{cases}
\end{eqnarray}
where
\[
P(t)= {\left(\Phi^{\top}\right) }^{ -1}(t) \left[\int_t^T\Phi^\top (s)l_x^*(s)ds+\Phi^\top (T)h_x(x^*(T)) \right]\,.
\]
\end{theorem}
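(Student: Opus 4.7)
The plan is to combine Proposition \ref{l.3.1} (which gives the explicit form of the variational process $y(t)$ via $\Phi(t),\Phi^{-1}(t)$) with Malliavin duality to reformulate the criticality equation $\left.\frac{d}{d\e}J(u^*+\e\bar u)\right|_{\e=0}=0$ as a single conditional expectation identity of the form \eref{e.6.1}, and then peel off the pointwise consequences using Lemma \ref{l.6.4}. Specifically, first I would differentiate $J$ in $\e$, substitute the formula \eref{y3} for $y(t)$, and expand $\EE[l_x^{*\top}(t)y(t)]$ and $\EE[h_x^{\top}(x^*(T))y(T)]$ into terms of $\bar u$. The pathwise integral $d^\circ B_j^H$ appearing in $y$ is then handled via the identity \eref{expectation}, which pushes the $\DD_s^j$ operator onto the integrand and converts the stochastic integrals into Lebesgue integrals in $s$, so that the whole expression becomes $\EE\int_0^T \Theta(s)\bar u(s)\,ds$ for a certain process $\Theta$. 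Restricting $\bar u$ to be $\cG_a$-measurable indicators $\mathbf 1_{[a,b]}\tilde u$ and re-expressing some of the $ds$ terms back as divergence integrals yields the key identity \eref{e.6.1}.

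Next, I would invoke Lemma \ref{l.6.4} on \eref{e.6.1}. Its two conclusions give, respectively, $\EE[G_j(T,t)\,|\,\cF_t]=0$ for each $j$ and $\EE[\tilde F(T,t)\,|\,\cF_t]=0$. With the definitions \eref{e.6.12} for $P(t)$ and $p(t)$, the first conclusion is exactly the algebraic constraint $\sum_j\sigma_u^{j,*\top}(t)p(t)=0$, so that one piece of the system is free. The second conclusion, using $\tilde F = F + \sum_j \DD^j_s G_j$, becomes the stationarity equation \eref{e.6.15}, namely $b_u^{*\top}(t)p(t)+l_u^*(t)+\sum_j\EE[\DD^j_t G_j(T,t)\,|\,\cF_t]=0$.

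The main obstacle is to rewrite $\EE[\DD^j_t G_j(T,t)\,|\,\cF_t]$ in a form that only involves $p(t)$ and the controls, since the fractional Malliavin derivative $\DD$ does not commute with conditional expectation. I would use Proposition \ref{p.2.6}, which expresses $\DD^l_t F$ via an integral transform of the standard Malliavin derivative $D^l_r F$; then, splitting the outer $s$-integral at $t$ and using the classical commutation $\EE[D^j_r X\,|\,\cF_t]=D^j_r\EE[X\,|\,\cF_t]$ on the piece $r\le t$, I obtain the two integrals $I_1,I_2$ in \eref{e.6.17}-\eref{e.6.18}. The piece $I_1$ naturally produces the term $D^j_r(p(t)\sigma_u^{j,*}(t))$, while $I_2$ involves the conditional expectation of $D^j_r P(t)$; together with $\sum_j\sigma_u^{j,*\top}p=0$ they give the optimality condition as stated.

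Finally, it remains to show that $p(t)$ satisfies the claimed backward SDE. Writing $p(t)={\left(\Phi^\top\right)}^{-1}(t)\left(M_t-A_t\right)$ with $M_t=\EE^{\cF_t}\!\left[\int_0^T\Phi^\top(s)l_x^*(s)ds+\Phi^\top(T)h_x(x^*(T))\right]$ and $A_t=\int_0^t\Phi^\top(s)l_x^*(s)ds$, the process $M_t$ is a square-integrable $\cF_t$-martingale with respect to the underlying standard Brownian motion $W$, hence by the martingale representation theorem $M_t=M_0-\sum_j\int_0^t\Phi^\top(s)q_j(s)dW_j(s)$ for some adapted $q_j$ (the sign and factor $-\Phi^\top(s)$ being chosen so that the BSDE takes the cleanest form). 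Then the mixed product rule of Proposition \ref{p.2.5}, together with the dynamics \eref{y2} of $\Phi^{-\top}$, gives $dp(t)$ with drift $-(b_x^{*\top}p+l_x^*)dt$, fractional part $-\sigma_x^{*\top}p\,d^\circ B^H$ and standard part $\sum_j q_j dW_j$. The terminal condition $p(T)=h_x(x^*(T))$ is immediate. Assembling these pieces yields the full system asserted in the theorem.
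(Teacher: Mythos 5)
Your proposal follows essentially the same route as the paper: deriving the conditional‑expectation identity \eref{e.6.1} from the criticality condition via Proposition \ref{l.3.1} and formula \eref{expectation}, peeling off the pointwise conditions with Lemma \ref{l.6.4}, handling $\EE[\DD^j_t G_j(T,t)\,|\,\cF_t]$ through Proposition \ref{p.2.6} with the split at $t$ and the commutation of $D^j_r$ with $\EE^{\cF_t}$ for $r\le t$, and obtaining the backward equation for $p(t)$ by martingale representation together with Proposition \ref{p.2.5} and \eref{y2}. The argument is correct and matches the paper's own proof step for step.
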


\end{document}